\newtheorem{theorem}{Theorem}[section]
\newtheorem{lemma}[theorem]{Lemma}
\def\ifl{\iffalse }
\def\bc{\begin{center}}       \def\ec{\end{center}}
\def\ba{\begin{array}}        \def\ea{\end{array}}
\def\be{\begin{equation}}     \def\ee{\end{equation}}
\def\bea{\begin{eqnarray}}    \def\eea{\end{eqnarray}}
\def\beaa{\begin{eqnarray*}}  \def\eeaa{\end{eqnarray*}}
\numberwithin{equation}{section}
\newtheorem{remark}[theorem]{Remark}
\numberwithin{equation}{section}
\begin{document}

\title[Fully parabolic chemotaxis system,   boundedness, large time behavior]
{On boundedness,  blow-up and convergence in a  two-species and two-stimuli  chemotaxis system with/without loop}

\author{Ke Lin}
\address{School of Economics and Mathematics, Southwestern University of Economics and Finance,Chengdu, 610074 SICHU China}
\email{linke@swufe.edu.cn}

\author{Tian Xiang$^*$}
\address{Institute for Mathematical Sciences, Renmin University of China, Bejing, 100872, China}
\email{txiang@ruc.edu.cn}
\thanks{$^*$ Corresponding author.}

\subjclass[2010]{Primary:  35K59, 35B25, 35B44, 35K51; Secondary:  92C17, 92D25.}


\keywords{Chemotaxis with/without loop,  product of mass, boundedness, blow-up, gradient estimates, exponential convergence.}

\begin{abstract}
In this work, we  study  dynamic properties of classical solutions to a  homogenous  Neumann initial-boundary value problem (IBVP) for a two-species and two-stimuli  chemotaxis model with/without chemical signalling loop in a 2D  bounded and smooth domain.  We detect the product of two species masses  as a feature to determine boundedness, gradient estimate, blow-up and exponential convergence of classical solutions for the corresponding IBVP. More specifically, we first show generally a   smallness on  the  product of both species masses, thus allowing one species mass to be  suitably large, is sufficient to guarantee  global boundedness, higher order gradient estimates and $W^{j,\infty}(j\geq 1)$-exponential convergence  with rates of convergence to constant equilibria; and then, in  a special case, we detect a straight  line of masses on which blow-up occurs for  large product of masses. Our findings provide  new understandings about the underlying model, and thus,  improve and extend greatly the existing knowledge relevant to this model.
\end{abstract}

\maketitle

\section{Introduction and statement of main results}

In this work, we  further study  dynamic properties of classical solutions to the Neumann initial-boundary value problem for the  following two-species and two-stimuli  chemotaxis model with/without chemical signalling loop:
\be\label{Loop equations}\begin{cases}
u_t =  \nabla\cdot\left(\nabla u-\chi_1u\nabla v\right) &\text{in } \Omega\times(0,\infty), \\[0.2cm]
\tau_1 v_t = \Delta v-v+w  &\text{in } \Omega\times(0,\infty), \\[0.2cm]
w_t =  \nabla\cdot\left(\nabla w-\chi_2w\nabla z-\chi_3w\nabla v\right) &\text{in } \Omega\times(0,\infty), \\[0.2cm]
\tau_2 z_t = \Delta z-z+u&\text{in } \Omega\times(0,\infty), \\[0.2cm]
\frac{\partial u}{\partial \nu}=\frac{\partial v}{\partial \nu}=\frac{\partial w}{\partial \nu}=\frac{\partial z}{\partial \nu}=0&\text{on } \partial\Omega\times(0,\infty), \\[0.2cm]
\left(u, \  \tau_1v,\  w, \   \tau_2z\right)=\left(u_0,\    \tau_1v_0,\    w_0, \  \tau_2z_0\right)&\text{in } \Omega\times\{0\}.
\end{cases}
\ee
 Here, $\Omega\subset \mathbb{R}^2$ is  a bounded and smooth domain  and $\frac{\partial}{\partial\nu}$ denotes the outer normal derivative on the boundary  $\partial \Omega$,  $u=u(x,t)$ and $w=w(x,t)$ respectively denote the unknown density of  macrophages and  tumor cells,  while $v=v(x,t)$ and $z=z(x,t)$ represent the concentration of chemical signals secreted by $w$ and $u$, respectively. The modelling parameters $\chi_i>0,\tau_i\geq 0\ (i=1,2)$ and $\chi_3\in \mathbb{R}$ are given constants.

Model \eqref{Loop equations} involves four unknown variables $u,v,w,z$ and describes a two-species and two-stimuli  chemotaxis model with/without chemical signalling loop, depending on $\chi_3=0$ or not:  macrophages $u$ secrete a chemical signal $z$, called gradient epidermal growth factor, which has an attractive impact on tumor cells $w$ and further stimulates them to secrete the other chemical signal $v$, called the colony stimulating factor 1, which attracts  macrophages $u$ to aggregate and binds to receptors of the macrophages $u$,
continuing the activation of them in return \cite{A1}. This model contains two widely-studied sub-models: upon setting $u=z\equiv0$ or $\chi_1=\chi_2=0$, the well-known one-species and one-stimuli minimal Keller-Segel model follows:
\be \label{KS-1}\begin{cases}
\tau_1 v_t = \Delta v-v+w  &\text{in } \Omega\times(0,\infty), \\[0.2cm]
w_t =  \nabla\cdot\left(\nabla w-\chi_3w\nabla v\right) &\text{in } \Omega\times(0,\infty).
\end{cases}
\ee
 This minimal KS model is well-known to exhibit  critical mass blow-up striking future in 2D (small mass $\chi_3\|w_0\|_{L^1(\Omega)}<\pi^*$ , defined by Lemma \ref{TM-ineq} below, yields boundedness \cite{GZ98, A12}, otherwise, blow-up may occur \cite{HV97, HW01, JL92, A11}) and generic blow-up occurs in $\geq 3$D, see the review articles  \cite{A16, Horst-03, Win10-JDE, Win2013} for more information. The second important sub-model is the following two-species and two stimuli chemotaxis model  obtained by setting $\chi_3=0$:
 \be\label{KS-2}\begin{cases}
u_t =  \nabla\cdot\left(\nabla u-\chi_1u\nabla v\right) &\text{in } \Omega\times(0,\infty), \\[0.2cm]
\tau_1 v_t = \Delta v-v+w  &\text{in } \Omega\times(0,\infty), \\[0.2cm]
w_t =  \nabla\cdot\left(\nabla w-\chi_2w\nabla z\right) &\text{in } \Omega\times(0,\infty), \\[0.2cm]
\tau_2 z_t = \Delta z-z+u&\text{in } \Omega\times(0,\infty).
\end{cases}
\ee
 \textbf{When $\tau_1=\tau_2=0$}, Tao and Winkler \cite{A4}  systematically studied the boundedness vs blow-up, wherein $\chi_1$ and $\chi_2$ are allowed to be real: for either $\chi_1<0$ or $\chi_2<0$, boundedness for large initial data is guaranteed in $\leq 3$D; in the challenging while more interesting case when  both $\chi_1>0$  and $\chi_2>0$,  boundedness vs blow-up
is characterized by  the total mass of both  species: writing
\be\label{mass-def}
m_1=\int_{\Omega}u_0, \quad \quad m_2=\int_{\Omega}w_0,
\ee
then boundedness is ensured for $\max\{m_1, \   m_2\}<C_0$ with some $C_0>0$,  whereas,  for $\chi_1=\chi_2=1$, finite time blow-up in $2$D may occur for $\min\{m_1, \   m_2\}>4\pi$. These results were  improved by Yu et. al. in \cite{A3} by showing that $C_0=4\pi$ and a blow-up criterion that
\be\label{blowup-yu}
\frac{1 }{m_2\chi_1}+\frac{1 }{m_1\chi_2}<\frac{1}{2\pi}.
 \ee
 Very recently, we observed in \cite{A2}  that the chemotactic signaling loop between two cell types  bridges  certain relationship between $u$ and $w$, and therefore,  the dynamics of one species shall be essentially determined by the other. To verify that, we considered  a 2D much simplified version of \eqref{Loop equations} in the unit ball $\Omega =B_1(0)\subset\mathbb{R}^2$ with the second and fourth equation respectively replaced by
\be\label{S-KS}
0=\Delta v-\bar{w}_0+w, \quad \quad 0=\Delta z-\bar{u}_0+u,  \quad \quad \bar{u}_0=\frac{1}{|\Omega|}\int_\Omega u_0.
\ee
In this setup,  the problem essentially becomes two  1D scalar parabolic equations, which renders parabolic comparison principles applicable. Then substantial progresses  on the simultaneous  boundedness and finite-time blow-up are provided and, in particular, the previous boundedness for  both small masses was improved to be $\min\left\{m_1\chi_2, \   m_2\chi_1\right\}<4\pi$, requiring only one mass be small. While, those arguments, especially \cite[Lemma 3.1]{A2},  seem to be hardly adapted to \eqref{KS-2} even in radial settings. Even through suitable largeness of both masses are known to produce blow-ups \cite{A2, A4, A3} (c.f. also \eqref{blowup-yu}), however, in non-radial settings, as a starting motivation of this project, we are wondering
\begin{itemize}
 \item[(Q1)] whether suitable largeness of one mass is still able to ensure boundedness and further convergence?
\end{itemize}

 \textbf{When $\tau_1>0, \tau_2>0$}, in this fully parabolic case, much less seems to be known except that Li and Wang \cite{A5} provided boundedness for \eqref{KS-2} under an implicit smallness condition on both $m_1$ and $m_2$. On the other hand,   to our best knowledge,  so far,  no blow-up has been detected yet and there seems even no available result on large time behavior of bounded solutions to either \eqref{KS-2} or \eqref{Loop equations}, except adding certain damping sources of Logistic type \cite{TB17, QG19, TW12-non, TMZK18, ZLY17, Zhang19, ZC17} for similar systems. The knowledge  is  far from ideal compared to the case of $\tau_1=\tau_2=0$.  Our second and primary motivation is thus to explore   (Q1) for  both \eqref{KS-2} and \eqref{Loop equations} without any damping sources, and, moreover, we are wondering how much the blow-up criterion  \eqref{blowup-yu} in the elliptic case can be carried over to the fully parabolic case by asking
 \begin{itemize}
   \item[(Q2)]  whether suitable largeness of both masses induces blow-up?
\end{itemize}

  As  a continuation of mainly  works \cite{A5, A2,A4, A3}, our  purpose is to provide further understandings about global dynamics of the two-species and two-stimuli chemotaxis model \eqref{Loop equations} with/without signal loop motivated by the non-obvious questions (Q1) and (Q2) for the cases of $\tau_1=\tau_2=0$ and $\tau_1, \tau_2>0$.  Roughly, going far beyond (Q1) and (Q2), our findings first show that  only a   smallness of product $m_1m_2\chi_1\chi_2$ is needed to ensure global boundedness, higher order gradient estimates and $W^{j,\infty}(j\geq 1)$-convergence with  rates of convergence;  and then, in  a special case, we  detect a line of $m_1$ and $m_2$ on which  blow-up occurs  for  large product of masses. To state  our main results precisely,  we first note from the 2D Gagilardo-Nirenberg interpolation inequality, cf. \eqref{GN-inequ},  there exists $C_{GN}=C_{GN}(\Omega)>0$ such that
  \be\label{GN-inequ0}
\|\psi\|_{L^4(\Omega)}^4\leq 8C_{GN}^4 \|\psi\|_{L^2(\Omega)}^2\|\nabla \psi\|_{L^2(\Omega)}^2+  8C_{GN}^4\|\psi\|_{L^2(\Omega)}^4, \ \ \  \forall  \psi\in W^{1,2}(\Omega).
\ee
Next, thanks to the 2D Sobolev embedding  $W^{1,1}(\Omega)\hookrightarrow L^2(\Omega)$, we define
\be\label{k-def}
k=4|\Omega|\left(\inf\left\{\frac{\|\nabla \varphi\|_{L^1(\Omega)}^2}{\|\varphi-1\|_{L^2(\Omega)}^2}: \ \ \varphi\in W^{1,1}(\Omega),\ \varphi\not\equiv 1, \  \bar\varphi=1\right\}\right)^{-1},
\ee
which is well-defined and is a positive and finite number.  Finally, for the initial data,  we assume, throughout this paper, that
\be\label{initi data reg}
\begin{cases} \left(u_0,\tau_1 v_0, w_0, \tau_2 z_0\right)\in C^0(\bar{\Omega})\times W^{1,\infty}(\Omega)\times C^0(\bar{\Omega})\times W^{1,\infty}(\Omega),\\[0.2cm]
 u_0\geq,\not\equiv0, \  \  \tau_1 v_0\geq0, \  \  w_0\geq,\not\equiv0, \  \  \tau_2 z_0\geq 0. \end{cases}
\ee
With these preparations,  our achievements can be stated precisely as follows.
\begin{theorem}\label{main theorem}
Let $\chi_i>0,\tau_i\geq 0\ (i=1,2), \chi_3\in\mathbb{R}$,  $\Omega\subset \mathbb{R}^2$ be a bounded and smooth domain, and let the initial data    $(u_0,\tau_1 v_0, w_0, \tau_2 z_0)$ fulfill \eqref{initi data reg}, and finally, let $m_1$ and $m_2$ be the respective mass of $u_0$ and $w_0$ as defined by \eqref{mass-def}.
 \begin{enumerate}
 \item[(B1)] [\textbf{Uniform Boundedness}] Assume that
\be\label{small mass bdd}
m_1m_2\chi_1\chi_2<\begin{cases} \left(\pi^*-m_2\chi_3\right)\pi^*, &\text{ if } \tau_1=\tau_2=0,\\[0.2cm]
 \frac{\sqrt{1-4m_2\chi_3C_{GN}^4}}{4C_{GN}^8}, &\text{ if } \tau_1, \tau_2>0. \end{cases}
\ee
 Then the IBVP  \eqref{Loop equations} admits a unique global-in-time  classical solution $(u,v,w,z)$ which is positive and is uniformly bounded in time according to
\be\label{bdd-thm-fin}
\quad \quad  \|u(t)\|_{L^{\infty}}+\|v(t)\|_{W^{1,\infty}}
+\|w(t)\|_{L^{\infty}}+\|z(t)\|_{W^{1,\infty}}\leq C_1,\  \ t\geq 0.
\ee
\item[(B2)][\textbf{Gradient Estimates}]  If either \eqref{small mass bdd} holds or $\|(u\ln u)(t)\|_{L^1}+\|(w\ln w)(t)\|_{L^1}$ is uniformly bounded on the maximal existence time interval  $(0, T_m)$, then $T_m=\infty$ and, besides \eqref{bdd-thm-fin},  the following higher order gradient estimate away from $t=0$, say $t\geq1$,  holds:
\be\label{gradUW-improve-thm}
\begin{split}
\quad\quad \quad &\| \left(u(t), \  w(t)\right)\|_{W^{2,4}}+\|\left(u(t), \ w(t)\right)\|_{W^{1,\infty}}+\|\left(v(t),
\ z(t)\right)\|_{W^{3,\infty}}\\[0.2cm]
 &+\| \left(I_{\{\tau_1=0\}}v(t), \ I_{\{\tau_2=0\}}z(t)\right)\|_{W^{4,4}}\leq C_2, \quad  \forall t\geq 1.
\end{split}
\ee
\item[(B3)][\textbf{$W^{j,\infty}$-Exponential Convergence}] When $\tau_1=\tau_2=0$, assume
\be\label{Exp-con-pe}
 k^2m_1m_2\chi_1\chi_2+km_2|\Omega|\chi_3^+< 4|\Omega|^2, \quad  \chi_3^+=\max\{\chi_3, 0\};
\ee
when $\tau_1>0$ and $\tau_2>0$, assume
\be\label{Exp-con-pp}
\begin{cases}
\frac{2-\sqrt{22}}{3}<\frac{km_2\chi_3}{|\Omega|}<\sqrt{2}, \\[0.2cm] k^2m_1m_2\chi_1\chi_2<\frac{2\sqrt{2}}{3}|\Omega|^2\min\left\{1, \ \ \frac{3}{2}+\frac{km_2\chi_3}{|\Omega|}\right\}.
\end{cases}
\ee
Then the unique global solution of \eqref{Loop equations} decays exponentially according to:
\be\label{lt-thm}
 \ \ \begin{cases}
 \|\left(u(t)-\bar{u}_0, \   w(t)-\bar{w}_0\right)\|_{W^{1, \infty}}\leq C_3 e^{-\frac{\sigma(k)}{14}t}, \   \forall t\geq 1, \\[0.2cm]
 \|\left(v(t)-\bar{w}_0, \  z(t)-\bar{u}_0\right)\|_{W^{3,\infty}}\leq C_4 e^{-\frac{\mu(k)}{44}t}, \  \forall t\geq 1,  &\text{if } \tau_1=\tau_2=0, \\[0.2cm]
 \|\left(v(t)-\bar{w}_0, \  z(t)-\bar{u}_0\right)\|_{W^{2,\infty}}\leq C_5 e^{-\frac{\zeta(k)}{15}t}, \  \forall t\geq 1,  &\text{if } \tau_1, \tau_2>0.
\end{cases}
\ee
\item[(B4)][\textbf{Finite time Blow-up}] Let $\tau_1= \tau_2$ and $\chi_3=0$. Then on the straight  line $m_1\chi_2=m_2\chi_1$,  there exists a family of initial data $(u_0,\tau_1v_0, w_0, \tau_2z_0)$ with
\be\label{large mass blow-up}
m_1m_2\chi_1\chi_2>(\pi^*)^2,
\ee
such that for some finite  $T>0$ the  unique solution of the  IBVP  \eqref{Loop equations} exists  classically  on $\Omega\times (0, T)$ but blows up at $t=T$ in the sense that
\be\label{blowup-new}
\limsup_{t\nearrow T}\left(\|\left(u\ln u\right)(t)\|_{L^1}+\|\left(w\ln w\right)(t)\|_{L^1}\right)=\infty.
\ee
\end{enumerate}
Here and below,   $\pi^*$ is an explicit positive number defined in Lemma  \ref{TM-ineq}, $C_{GN}$  and  $k$ are defined in \eqref{GN-inequ0} and \eqref{k-def}, respectively,  $I_{\{\tau_1=0\}}=1$ if $\tau_1=0$, otherwise, it is zero; $\zeta(k)=\min\{\frac{1}{\tau_1}, \  \frac{1}{\tau_2},\    \frac{\sigma(k)}{2}\}$ and $\sigma(k)=\mu(k)$ if $\tau_1=\tau_2=0$ and $\sigma(k)=\delta(k)$ if $\tau_1,  \tau_2>0$ with $\mu(k)$ defined by \eqref{decay-rate1} and $\delta(k)$ by \eqref{decay-rate}, both of them are functions of $m_i,\chi_i, \tau_i$ and $k$, $C_i=C_i(u_0, \tau_1 v_0, w_0, \tau_2z_0, \chi_i, |\Omega|)$ are positive constants, $\bar{u}_0$, the average of $u_0$,  is defined in \eqref{S-KS}, similarly for $\bar{w}_0$, vector notation is understood as component-wise like $\|(u,v)\|_{L^1}=\|u\|_{L^1}+\|w\|_{L^1}$, and, finally, the commonly abbreviated notations are used: for instance, for a generic function $f$,
$$
\|f(t)\|_{L^p}=\|f(\cdot, t)\|_{L^p}=\|f(\cdot, t)\|_{L^p(\Omega)}=\left(\int_\Omega |f(x,t)|^pdx\right)^\frac{1}{p}.
$$
\end{theorem}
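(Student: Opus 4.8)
The whole analysis rests on the $L^1$ conservation laws $\int_\Omega u(\cdot,t)\equiv m_1$ and $\int_\Omega w(\cdot,t)\equiv m_2$, obtained by integrating the first and third equations of \eqref{Loop equations} over $\Omega$ under the Neumann conditions, together with a coupled entropy functional for the pair $(u,w)$ whose cross terms are exactly what the product $m_1m_2\chi_1\chi_2$ controls. For (B1) and (B2) the plan is to estimate $\mathcal H(t):=\int_\Omega u\ln u+a\int_\Omega w\ln w$ with a suitable weight $a>0$; differentiating and using $\int|\nabla u|^2/u=4\int|\nabla\sqrt u|^2$ produces the chemotactic contributions $\chi_1\int\nabla u\cdot\nabla v$ and $\chi_2\int\nabla w\cdot\nabla z+\chi_3\int\nabla w\cdot\nabla v$. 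When $\tau_1=\tau_2=0$ the elliptic equations express the signals explicitly through $w$ and $u$, and these cross terms can be absorbed by the sharp Trudinger--Moser inequality of Lemma \ref{TM-ineq}, applied to $v$ (forced by $w$, of mass $m_2$) and to $z$ (forced by $u$, of mass $m_1$); combining the two thresholds yields precisely the regime $m_1m_2\chi_1\chi_2<(\pi^*-m_2\chi_3)\pi^*$, with the correction $m_2\chi_3$ reflecting the extra self-aggregation of $w$ through $v$ when $\chi_3>0$. When $\tau_1,\tau_2>0$ this shortcut is lost, so I would instead add to $\mathcal H$ the signal energies $\int|\nabla v|^2$, $\int|\nabla z|^2$ (tested against $-\Delta v$, $-\Delta z$), control the ensuing $\int w^2$, $\int u^2$ via the Gagliardo--Nirenberg inequality \eqref{GN-inequ0} --- this is where $C_{GN}$ and the discriminant $\sqrt{1-4m_2\chi_3C_{GN}^4}$ enter --- and close a dissipative differential inequality under the second line of \eqref{small mass bdd}. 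In either case one gets a uniform-in-time bound on $\mathcal H$, i.e.\ on $\|u\ln u\|_{L^1}+\|w\ln w\|_{L^1}$; a Moser--Alikakos iteration together with $L^p$--$L^q$ smoothing for the Neumann heat semigroup then lifts this to \eqref{bdd-thm-fin}, and parabolic $L^p$ maximal regularity with Schauder estimates, run on $[1,\infty)$ so as to use the parabolic smoothing in spite of the roughness allowed in \eqref{initi data reg}, gives \eqref{gradUW-improve-thm}. The claim $T_m=\infty$ under a bounded $L\log L$ norm is the contrapositive of \eqref{blowup-new} and falls out of the same bootstrap, which takes only that bound as input.

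For (B3), the conserved masses force the constant steady states to be $(\bar u_0,\bar w_0,\bar w_0,\bar u_0)$. I would build a Lyapunov functional for the fluctuations --- say $\mathcal F(t)=\int u\ln\frac u{\bar u_0}+\int w\ln\frac w{\bar w_0}$ plus signal-gradient corrections, augmented in the fully parabolic case by $\int|\nabla v|^2$, $\int|\nabla z|^2$ --- and prove $\frac{d}{dt}\mathcal F\le-\sigma\mathcal F$. The elementary point is that the Sobolev constant $k$ of \eqref{k-def}, applied to $\varphi=w/\bar w_0$ and combined with $\|\nabla w\|_{L^1}\le2\sqrt{m_2}\,\|\nabla\sqrt w\|_{L^2}$, gives $\|w-\bar w_0\|_{L^2}^2\le\frac{km_2}{|\Omega|}\|\nabla\sqrt w\|_{L^2}^2$ and symmetrically for $u$; hence the coupling in $\frac{d}{dt}\mathcal F$ is dominated by the entropy dissipation exactly when $k^2m_1m_2\chi_1\chi_2$ and $km_2\chi_3^+$ satisfy \eqref{Exp-con-pe} --- or, in the fully parabolic case, when the quadratic form generated by the additional $v_t$-, $z_t$-contributions is positive definite, which is what the two-sided bounds in \eqref{Exp-con-pp} encode. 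This yields exponential decay of $\mathcal F$, hence of $\|u-\bar u_0\|_{L^2}$, $\|w-\bar w_0\|_{L^2}$, $\|\nabla v\|_{L^2}$, $\|\nabla z\|_{L^2}$ at rate $\sigma(k)/2$; interpolating this weak decay against the uniform higher-order bounds from (B2) by Gagliardo--Nirenberg then upgrades it to the $W^{j,\infty}$ statements in \eqref{lt-thm}, the fractions $\frac1{14}$, $\frac1{44}$, $\frac1{15}$ being exactly the interpolation exponents lost in that last step.

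For (B4), the observation is that, under $\tau_1=\tau_2$ and $\chi_3=0$ and along the line $m_1\chi_2=m_2\chi_1$, the system \eqref{Loop equations} possesses the invariant manifold $\bigl\{w=\tfrac{\chi_2}{\chi_1}u,\ z=\tfrac{\chi_1}{\chi_2}v\bigr\}$: substituting this ansatz makes the third and fourth equations constant multiples of the first and second, so \eqref{Loop equations} collapses to the minimal Keller--Segel model \eqref{KS-1} with chemotactic coefficient $\chi_1$ and with $w$-mass equal to $m_2$, and by uniqueness of classical solutions any trajectory issued from the manifold stays on it. Taking $(u_0,v_0)$ with $(w_0,z_0)=\bigl(\tfrac{\chi_2}{\chi_1}u_0,\tfrac{\chi_1}{\chi_2}v_0\bigr)$ and $w_0$ drawn from the known 2D finite-time blow-up profiles for \eqref{KS-1} at supercritical mass $\chi_1m_2>\pi^*$ --- the radial parabolic--parabolic constructions of \cite{HV97} when $\tau_1=\tau_2>0$, and the parabolic--elliptic ones of \cite{HW01,JL92,A11} when $\tau_1=\tau_2=0$ --- the line relation gives $m_1m_2\chi_1\chi_2=(m_1\chi_2)(m_2\chi_1)=(\chi_1m_2)^2>(\pi^*)^2$, that is \eqref{large mass blow-up}, while the blow-up of $\|w\ln w\|_{L^1}$ for the Keller--Segel component is inherited verbatim as \eqref{blowup-new}; and such data manifestly meet \eqref{initi data reg}.

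The step I expect to be the genuine obstacle is the fully parabolic regime $\tau_1,\tau_2>0$ running through (B1)--(B3): there the elliptic identities are unavailable, and one must find the right mixture of entropy, signal gradient energies and --- for the convergence part --- $v_t$-, $z_t$-type quantities, and then push the Gagliardo--Nirenberg bookkeeping accurately enough to reach exactly \eqref{small mass bdd} and \eqref{Exp-con-pp} rather than strictly worse conditions. By contrast (B4) is structurally transparent once the invariant manifold is spotted; the only delicate point there is to verify that the cited blow-up results transfer to the precise normalization of \eqref{KS-1}, in particular with the lower-order $-v$ (respectively $-z$) term present.
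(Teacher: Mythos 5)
Your proposal follows essentially the same route as the paper in all four parts: the conditional entropy/Lyapunov functional closed by Trudinger--Moser in the elliptic case and by the Gagliardo--Nirenberg inequality (with the added signal energies) in the fully parabolic case for (B1), a bootstrap to higher regularity for (B2), the $k$-weighted entropy functional for the fluctuations $(u/\bar u_0,w/\bar w_0)$ combined with interpolation against the (B2) bounds for (B3), and the reduction to the minimal Keller--Segel system on the invariant line $m_1\chi_2=m_2\chi_1$ for (B4). The only loose point is that the $L\log L$ blow-up \eqref{blowup-new} is not ``inherited verbatim'' from the cited Keller--Segel results (which give $L^\infty$ blow-up); it follows from the contrapositive of (B2), which you correctly identify elsewhere in your sketch, so this is a phrasing issue rather than a gap.
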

\begin{remark}[Product of masses on boundedness, blow-up and convergence]\label{thm-rem}
\
\
\begin{itemize}
\item[(P1)] Our higher order gradient estimate (B2) and  the $W^{j,\infty}(j\geq 1)$-convergence in (B3) seem to appear the first time in chemotaxis-related systems.
\item[(P2)] The feature of our main results is that we detect a smallness on the product $m_1m_2\chi_1\chi_2$ to ensure boundedness, higher order gradient estimates  and exponential convergence with  rates of  convergence  thanks to (B2) and (B3). These together with Remark \ref{4-2-lt} below show that   our results also extend and improve existing boundedness and convergence of solutions to the one-species and one stimuli Keller-Segel  model \eqref{KS-1}, c.f. \cite{GZ98, Win10-JDE}.
\item[(P3)] When $\chi_3=0$, our boundedness  (B1)  under the (explicit) smallness \eqref{small mass bdd} improves greatly  the existing boundedness under smallness of both $m_1$ and $m_2$ \cite{A5, A4, A3}. When $\chi_3\neq 0$, boundedness  results (B1)-(B3)  extend our previous one  under  radial and elliptic simplification (cf. \eqref{S-KS}) in \cite{A2}. Moreover, the sign effect of $\chi_3$ is exhibited, showing damping effect of repulsion on  boundedness and convergence, especially, when $\tau_1=\tau_2=0$.
\item[(P4)] No matter $\tau_1=\tau_2=0$ or $\tau_1, \tau_2>0$,   (B4)  detects a blow-up line on $m_2\chi_1=m_1\chi_2$. We should point out that it  (along with (B1)) is not a blow-up criterion, while, it gives a rough lower bound for $C_{GN}$, i.,e, $C_{GN}^4\geq \frac{1}{2\pi^*}$. Also, in convex domains,  $k$ has a lower bound: $k\geq \frac{4d^2}{\pi^2}$, cf. Lemma \ref{k-const}. In non-symmetric domains, i.e., $\pi^*=4\pi$,  it is easy to see that this blow-up line is inside  the range enclosed by  the blow-up criterion  \eqref{blowup-yu},  which  is inside  the range enclosed by \eqref{large mass blow-up}. Hence, (B1) is not optimal even in the case of $\tau_1=\tau_2=0$. Together with (B1), we see that the critical curve that distinguishes boundedness and blow-up for \eqref{KS-2} must contain $(m_2\chi_1,\  m_1\chi_2)=(\pi^*, \ \pi^*)$ as a boundary point. We conjecture that a general version of  the blow-up criterion \eqref{blowup-yu} for \eqref{KS-2} continues to hold  in the case of $\tau_1, \tau_2>0$, namely,
     $$
    \frac{1 }{m_2\chi_1}+\frac{1 }{m_1\chi_2}<\frac{2}{\pi^*}.
    $$
    We leave this open problem as a future investigation.
\end{itemize}
\end{remark}

The point of our project is that we detect the product $m_1m_2\chi_1\chi_2$   as a feature to determine boundedness, gradient estimate, blow-up and $W^{j,\infty}(j\geq 1)$-exponential convergence for \eqref{Loop equations}. First,  the smallness of  $m_1m_2\chi_1\chi_2$ in \eqref{small mass bdd} or  \eqref{Exp-con-pe} or \eqref{Exp-con-pp}  allows us to choose suitably large  mass of one species  and small for the  other to ensure   global boundedness, higher order gradient estimates or  exponential convergence.   This  is in sharp contrast to  those of \cite{A5, A4, A3}, wherein smallness of both masses are needed to have boundedness in the sense of \eqref{bdd-thm-fin}. Second,  we  find a line of masses on which  blow-up  occurs for large product of masses. While, we have to mention that, even in the case $\tau_1=\tau_2=0$, critical mass phenomenon for \eqref{KS-2} has not been detected yet.  Critical mass phenomenon does  exist in  the minimal classical KS model with one-species and one-stimuli \cite{JL92, A11,A12,A13}, while, for more complex or multi-species chemotaxis systems, boundedness and blow-up exist \cite{ESV09, A6, A5, A2, A14,  A4, A3},  but critical mass blow-up occurs rarely \cite{A7, JW16-JDE}. In a future exploration, we shall aim to  determine a critical curve which separates  bounedeness and blow-up  for \eqref{Loop equations} or its simplified version like \eqref{KS-2}.

 In the remaining of this section,  we outline the structure of this paper, which contains four main sections.

In the present section, we provide an introduction to our two-species and two-stimuli  chemotaxis model with/without chemical signalling loop  that encompasses two important widely-studied sub-models, and then we formulate our main motivations and state our main findings in  Theorem \ref{main theorem} on product of masses on boundedness, higher order gradient estimates, blow-up  and exponential convergence.

 In Section 2, we  first state the local existence and extensibility of smooth solutions to the IBVP \eqref{Loop equations}, and then,  we obtain a standard $W^{1,q}$-estimate for an inhomogeneous  heat/elliptic equation, cf. Lemma \ref{reciprocal-lem}, and then, upon an observation of best constant for the Poincar\'{e} inequality  \cite{PW60}, we find  an explicit lower bound for $k$ defined in \eqref{k-def} in convex domains, cf. Lemma \ref{k-const}, and, finally, for convenience,  we state a version of Trudinger-Moser inequality \cite{ A12} and the widely-used 2D Gagliardo-Nirenberg  inequality \cite{Fri-bk}, which will be used later on.

 To make the flow of our ideas more clear, we divide Section 3 into 4 subsections to  prove our stated  boundedness,  gradient estimates  and finite time blow-up  in (B1),  (B2) and (B4), respectively. Our analysis begins with a general identity associated with \eqref{Loop equations} which becomes a conditional Lyapunov functional in the case of $\tau_1=\tau_2=0$ and small product of masses, cf. Lemmas \ref{Lyapunov-f-lemma} and \ref{u+w-log-bdd-lem}, and thus yields the key starting uniform $L^1$- boundedness of $(u\ln u, w\ln w)$. In the fully parabolic case, the same boundedness is derived based on estimating the differential of a well-selected combined energy together with subtle analysis, cf. Lemma \ref{fully PP}. Then, using quite known testing procedure, we raise the obtained $L^1$-boundedness  of $(u\ln u, w\ln w)$ eventually to the one stated in \eqref{bdd-thm-fin}, cf. Lemma \ref{linfty-bdd-lem}. Right after that, we are devoted to showing the uniform  $L^1$-boundedness  of $(u\ln u, w\ln w)$  indeed implies higher order gradient estimates as in (B2). To achieve that goal, we progressively use energy method together with the 2D G-N interpolation inequality, $W^{2,p}$-elliptic and $W^{1,q}$-parabolic estimates to derive  uniform estimates  for the following route map of mainly $u$ (similar for  $w$):
 $$
 \|\nabla u\|_{L^2}\rightarrow  \|\nabla u\|_{L^4}\rightarrow  \|\Delta u\|_{L^2}+\|(v,z)\|_{W^{2,\infty}}\rightarrow  \|\Delta u\|_{L^4}+\|(v,z)\|_{W^{3,\infty}}.
 $$
  Finally, in Subsection 3.4, on the straight line $m_2\chi_1=m_1\chi_2 $, we construct initial data satisfying \eqref{large mass blow-up} so that  the corresponding solution  of \eqref{Loop equations} blows up in finite time according to \eqref{blowup-new} due to (B2). In this case, upon an observation that  our two-species and two-stimuli model \eqref{Loop equations} is a two-copy of the one-species and one-stimuli minimal KS model (cf. Lemma \ref{link}), we make use of  the well-known blowup knowledge about the minimal model (\cite{HV97, HW01, Horst-03, JL92, A11,A12,A13}) to construct the existence of finite time blow-up for \eqref{Loop equations} as in (B4), cf. Lemma \ref{blowup-lem}.

  In Section 4, inspired from  \cite{GZ98}, we first transform our model \eqref{Loop equations} conveniently into an equivalent one in \eqref{equal system}, and then we construct two well-chosen testing  functionals involving $(U\ln U, W\ln W)$  in \eqref{F-def-con-pe} and \eqref{energy function},  which become genuine Lyapunov functionals and decay exponentially with precise rates under \eqref{Exp-con-pe} or \eqref{Exp-con-pp}, cf. Lemmas \ref{con-lem-pe} and \ref{con-lem-pp}. Here,  $(U, \ W)=(\frac{u}{\bar{u}_0},\ \frac{w}{\bar{w}_0})$, cf. \eqref{trans-id}.  As consequences, we obtain the crucial starting  $L^1$-exponential convergence of $(U\ln U, W\ln W)$ with precise convergence rates. Then with the aid of  the Csisz\'{a}r-Kullback-Pinsker inequality (cf. \cite{CJMTU01}), we indeed obtain $L^p \ (p\geq 1)$-exponential convergence of $(U-1, W-1)$, cf. Lemma \ref{con-improve-UWLp}. With these information at hand, one can use (commonly used,  cf. \cite{A19, A20, Win10-JDE}) the standard $W^{2,p}$-estimate in the case of $\tau_1=\tau_2=0$ or the $L^p$-$L^q$-smoothing estimate for the Neumann heat semigroup $\{e^{t\Delta}\}_{t\geq0}$ in the case of $\tau_1, \tau_2>0$ to derive the exponential decay of bounded solutions in up to $L^\infty$-norm. Here, thanks to our uniform higher order gradient estimates as in  (B2), instead, we readily utilize  the G-N interpolation inequality to  improve the $L^p$-convergence to $W^{j,\infty} (j\geq 1)$-convergence of $(U, V, W, Z)$ with  rate of convergence. Upon simple translations, we  achieve the $W^{j,\infty} $-exponential convergence for our original  model \eqref{Loop equations} in Lemma \ref{con-fin-lem}, which is indeed more than what has been stated in (B3).

\section{Preliminaries  and basic results on our model}

 We first state the well-established local  well-posedness  and extensibility  of solutions to the IBVP \eqref{Loop equations} and  elementary $L^1$-properties of  local solutions.
\begin{lemma}\label{2.l1}
Let $\chi_i>0,\tau_i\geq 0\ (i=1,2), \chi_3\in\mathbb{R}$,  $\Omega\subset \mathbb{R}^2$ be a bounded and smooth domain, and, let the initial data $(u_0,\tau_1 v_0, w_0, \tau_2 z_0)$ fulfill \eqref{initi data reg}.  Then there exist a maximal existence time $T_m\in (0,\infty]$ and a uniquely determined pair of positive  functions $(u,v,w,z)\in \left(C(\bar{\Omega}\times [0, T_m))\cap C^{2,1}(\bar{\Omega}\times (0, T_m))\right)^4$ that solve  the IBVP \eqref{Loop equations} classically on   $\Omega\times (0, T_m)$ and fulfill the following  $L^1$-properties: for $t\in (0, T_m)$,
\be\label{uvwz-l1}
\begin{cases}
  \|u(t)\|_{L^1}=\|u_0\|_{L^1}, \ \  \|w(t)\|_{L^1}=\|w_0\|_{L^1} ,\\[0.25cm]
 \|v(t)\|_{L^1}=\|w_0\|_{L^1}+\begin{cases}
 0, & \text{ if } \tau_1=0, \\[0.2cm]
 \left(\|v_0\|_{L^1}-\|w_0\|_{L^1}\right)e^{-\frac{t}{\tau_1}}, & \text{ if } \tau_1>0,
 \end{cases} \\[0.2cm]
 \|z(t)\|_{L^1}=\|u_0\|_{L^1}+\begin{cases}
 0, & \text{ if } \tau_2=0, \\[0.2cm]
 \left(\|z_0\|_{L^1}-\|u_0\|_{L^1}\right)e^{-\frac{t}{\tau_2}}, & \text{ if } \tau_2>0.
 \end{cases}
\end{cases}
\ee
 Moreover, the local solution $(u,v,w,z)$ fulfills the following extensibility criterion:
\be\label{u-w blow-up criterion}
T_m<\infty \Rightarrow  \limsup_{t\nearrow T_m}
\left(\|\left(u(t), w(t)\right)\|_{L^{\infty}}+\|\left(\tau_1 v(t), \tau_2 z(t)\right)\|_{W^{1,\infty}}\right)=\infty.
\ee
\end{lemma}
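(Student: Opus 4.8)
The plan is to follow the by-now standard route for chemotaxis systems of this type, adapting Amann's theory of quasilinear parabolic systems and its chemotaxis incarnations, and handling the parabolic ($\tau_i>0$) and elliptic ($\tau_i=0$) equations for $v$ and $z$ on the same footing. For local existence and uniqueness I would first eliminate the linear equations: when $\tau_i=0$, write $v=(I-\Delta)_N^{-1}w$ and $z=(I-\Delta)_N^{-1}u$, where $(I-\Delta)_N^{-1}$ is the positivity-preserving resolvent of the Neumann Laplacian, bounded from $L^q(\Omega)$ into $W^{2,q}(\Omega)$ for every $q\in(1,\infty)$; when $\tau_i>0$, represent $v$ and $z$ via the Neumann heat semigroup $\{e^{t\Delta}\}_{t\ge0}$. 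In either case this recasts the $u$- and $w$-equations as a fixed-point problem for $(u,w)$ in $C([0,T];C^0(\bar\Omega))^2$ (or in $C([0,T];L^q(\Omega))^2$ with $q>2$, together with a smoothing bound on $\nabla v$ and $\nabla z$), and a contraction-mapping argument on a short time interval --- using the smoothing estimates $\|\nabla e^{t\Delta}\varphi\|_{L^q}\le Ct^{-1/2}\|\varphi\|_{L^q}$ and their elliptic analogues together with the local Lipschitz structure of the taxis terms $u\nabla v$, $w\nabla v$, $w\nabla z$ --- produces a unique mild solution. Parabolic Schauder estimates then upgrade it to the asserted classical regularity $(u,v,w,z)\in(C(\bar\Omega\times[0,T_m))\cap C^{2,1}(\bar\Omega\times(0,T_m)))^4$, with $T_m$ chosen maximal.

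Positivity follows from the maximum principle. Once $v$ is known, the first equation is a linear uniformly parabolic divergence-form equation for $u$ subject to the homogeneous Neumann condition, so the weak maximum principle applied to $u_0\ge0$ gives $u\ge0$, and the strong maximum principle together with $u_0\not\equiv0$ gives $u>0$ on $\Omega\times(0,T_m)$; likewise $w>0$. Hence $v$ solves $\tau_1 v_t=\Delta v-v+w$ with a nonnegative, nontrivial right-hand side, so $v>0$ (in the elliptic case $v=(I-\Delta)_N^{-1}w>0$ by positivity of the resolvent), and similarly $z>0$. The $L^1$-identities come from integrating over $\Omega$: for $u$, the no-flux conditions give $\frac{d}{dt}\int_\Omega u=\int_{\partial\Omega}(\partial_\nu u-\chi_1u\,\partial_\nu v)=0$, hence $\|u(t)\|_{L^1}=\|u_0\|_{L^1}$, and the same for $w$; for $v$ with $\tau_1>0$, putting $y(t)=\int_\Omega v(t)=\|v(t)\|_{L^1}$ (using $v>0$) yields $\tau_1 y'=-y+\|w_0\|_{L^1}$, whence $y(t)=\|w_0\|_{L^1}+(\|v_0\|_{L^1}-\|w_0\|_{L^1})e^{-t/\tau_1}$, while for $\tau_1=0$ integrating $0=\Delta v-v+w$ gives $\int_\Omega v=\|w_0\|_{L^1}$; the identities for $z$ are obtained verbatim with $(u,\tau_2)$ in place of $(w,\tau_1)$.

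For the extensibility criterion \eqref{u-w blow-up criterion}, suppose $T_m<\infty$ while the displayed quantity stays bounded as $t\nearrow T_m$, and bootstrap. In the elliptic slots, $W^{2,p}$ elliptic regularity converts the $L^\infty$-bound on $w$ (resp.\ $u$) into a $W^{2,p}$-, hence $W^{1,\infty}$-, bound on $v$ (resp.\ $z$) for all $p<\infty$; in the parabolic slots the hypothesis already supplies $\nabla v,\nabla z\in L^\infty$. With $\nabla v$ and $\nabla z$ bounded on $\Omega\times(0,T_m)$, the $u$- and $w$-equations become linear uniformly parabolic equations with bounded coefficients, so parabolic $L^p$- and then Schauder estimates give uniform H\"older bounds for $u,w$ on $\bar\Omega\times[T_m/2,T_m)$; feeding these back into the $v,z$-equations (parabolic regularity when $\tau_i>0$, elliptic regularity when $\tau_i=0$) yields uniform H\"older bounds for all four components up to $t=T_m$, so $(u,v,w,z)(\cdot,t)$ converges in $C^0(\bar\Omega)\times W^{1,\infty}\times C^0(\bar\Omega)\times W^{1,\infty}$ as $t\nearrow T_m$, and the local result can be restarted at $t=T_m$, contradicting maximality. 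The one point needing a little care is closing this last loop: one must verify that the $W^{1,\infty}$-control of $v,z$ (respectively, the elliptic gain from $u,w$) genuinely suffices to reproduce the regularity of $u,w$ in the presence of the extra cross term $-\chi_3w\nabla v$ in the $w$-equation. As all these ingredients are standard, I would present only this outline and refer to the usual sources for the technical details.
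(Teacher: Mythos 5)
Your proposal is correct and follows essentially the same route as the paper: the authors simply cite the standard contraction-mapping/parabolic-regularity literature for local well-posedness, positivity and the extensibility criterion (the machinery you spell out in detail), and then obtain the $L^1$-identities by exactly the integration argument you give, namely mass conservation for $u,w$ and the ODE $\tau_1\frac{d}{dt}\int_\Omega v+\int_\Omega v=\int_\Omega w_0$ for $v$ (and its analogue for $z$).
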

\begin{proof}
  The local  well-posedness  and extensibility  of solutions to the IBVP \eqref{Loop equations} and thus  \eqref{equal system} have been well-established  via Banach contraction principle and parabolic regularity; see e.g.   \cite{A16,A21, A4, STW14,TW11,  ZLY17} for closely-related chemotaxis systems. The conservations of $u$ and $w$ follow upon integration by parts on the first and third equation in \eqref{Loop equations}. By a simple integration of the $v$-equation and using the homogeneous Neumann boundary conditions, one has
  $$
  \tau_1\frac{d}{dt}\int_\Omega v+\int_\Omega v=\int_\Omega w=\int_\Omega w_0,
  $$
  which implies the  identity for $L^1$-norm of $v$ in \eqref{uvwz-l1}. Likewise, the identity for $L^1$-norm of $z$ in \eqref{uvwz-l1} follows.   \end{proof}

\begin{lemma}  \label{reciprocal-lem} Let $\Omega\subset \mathbb{R}^2$ be a bounded and smooth domain and let
\be\label{q-exp}
\begin{cases}
q\in [1, \frac{2p}{2-p}), \quad & \text{if }  1\leq p\leq 2,\\
q\in [1, \infty], \quad & \text{if }  p>2.
\end{cases}
\ee
Then there exist  $C_1=C_1(p,q, \tau_1v_0, \Omega)>0$ and $C_2=C_2(p,q, \tau_2z_0, \Omega)>0$ such that  the unique local-in-time classical solution $(u,v,w,z )$ of \eqref{Loop equations} satisfies
 \be\label{gradvz-bdd-lp}
\begin{cases}
\| v(t)\|_{W^{1,q}} \leq C_1\left(1+\sup_{s\in(0,t)}\|w(s)\|_{L^p}\right), \ \ \ \forall t\in(0, T_m), \\[0.25cm]
\| z(t)\|_{W^{1,q}} \leq C_2\left(1+\sup_{s\in(0,t)}\|u( s)\|_{L^p}\right), \ \ \ \forall t\in(0, T_m).
\end{cases}
\ee
In particular, for any $q\in[1,2)$, there exists $C_3=C_3(q, \tau_1v_0, \tau_2z_0, \Omega)>0$ such that
\be\label{vz-starting bdd}
\| v(t)\|_{L^\frac{2q}{2-q}}+\| z( t)\|_{L^\frac{2q}{2-q}}+\| v( t)\|_{W^{1,q}}+\| z( t)\|_{W^{1,q}}\leq C_3,\ \forall t\in(0, T_m).
\ee
 \end{lemma}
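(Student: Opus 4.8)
The plan is to establish the two halves of \eqref{gradvz-bdd-lp} separately and then read off \eqref{vz-starting bdd} as the special case $p=1$. Since the $v$- and $z$-equations in \eqref{Loop equations} have the same structure, it suffices to treat $v$; the estimate for $z$ follows by replacing $(\tau_1,v,v_0,w)$ with $(\tau_2,z,z_0,u)$ throughout. The decisive dichotomy is whether $\tau_1=0$, where the $v$-equation is elliptic, or $\tau_1>0$, where it is genuinely parabolic.

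\emph{Elliptic case $\tau_1=0$.} For each fixed $t\in(0,T_m)$, the function $v(\cdot,t)$ solves the Neumann problem $-\Delta v+v=w(\cdot,t)$ in $\Omega$. Standard $L^p$ elliptic regularity gives $\|v(t)\|_{W^{2,p}}\le C\|w(t)\|_{L^p}$ with $C=C(p,\Omega)$, and since $\Omega\subset\mathbb{R}^2$, the Sobolev embedding $W^{2,p}(\Omega)\hookrightarrow W^{1,q}(\Omega)$ holds precisely for $q$ in the range \eqref{q-exp}: when $1\le p\le 2$ one needs $1/q>1/p-1/2$, i.e. $q<2p/(2-p)$ (understood as $q<\infty$ if $p=2$), while for $p>2$ one already has $W^{2,p}(\Omega)\hookrightarrow W^{1,\infty}(\Omega)$. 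Chaining the two inequalities yields the first line of \eqref{gradvz-bdd-lp}, the supremum over $s$ being redundant here.

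\emph{Parabolic case $\tau_1>0$.} Writing the second equation of \eqref{Loop equations} as $v_t=\frac{1}{\tau_1}\Delta v-\frac{1}{\tau_1}v+\frac{1}{\tau_1}w$ and using the variation-of-constants formula for the Neumann heat semigroup $(e^{t\Delta})_{t\ge0}$, we obtain
\[
v(t)=e^{-t/\tau_1}e^{t\Delta/\tau_1}v_0+\frac{1}{\tau_1}\int_0^t e^{-(t-s)/\tau_1}e^{(t-s)\Delta/\tau_1}w(s)\,ds.
\]
The first term is bounded in $W^{1,q}(\Omega)$ by a constant $C(\tau_1v_0,\Omega)$, since $e^{t\Delta}$ maps $W^{1,\infty}(\Omega)$ boundedly into itself (hence into $W^{1,q}(\Omega)$) and the factor $e^{-t/\tau_1}$ is bounded. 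For the convolution term I would invoke the $L^p$-$L^q$ smoothing estimates $\|\nabla e^{\sigma\Delta}f\|_{L^q}\le C(1+\sigma^{-1/2-(1/p-1/q)})\|f\|_{L^p}$ and $\|e^{\sigma\Delta}f\|_{L^q}\le C(1+\sigma^{-(1/p-1/q)})\|f\|_{L^p}$, valid for $q\ge p$ (for $q<p$ one first uses $\|f\|_{L^q}\le C\|f\|_{L^p}$ on the bounded domain). Estimating $\|w(s)\|_{L^p}\le \sup_{s\in(0,t)}\|w(s)\|_{L^p}$, pulling this out of the integral, and substituting $r=(t-s)/\tau_1$ reduces the matter to the finiteness of $\int_0^\infty(1+r^{-1/2-(1/p-1/q)})e^{-r}\,dr$, which holds exactly when $1/2+1/p-1/q<1$, i.e. for $q$ in the range \eqref{q-exp}; the exponential weight $e^{-r}$ simultaneously makes the bound uniform in $t\in(0,T_m)$. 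This proves \eqref{gradvz-bdd-lp}.

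\emph{The uniform bound \eqref{vz-starting bdd} and the main obstacle.} Taking $p=1$ in \eqref{gradvz-bdd-lp} makes every $q\in[1,2)$ admissible, and the $L^1$-conservation $\|w(t)\|_{L^1}=\|w_0\|_{L^1}$, $\|u(t)\|_{L^1}=\|u_0\|_{L^1}$ from Lemma \ref{2.l1} (see \eqref{uvwz-l1}) turns the right-hand sides of \eqref{gradvz-bdd-lp} into $t$-independent constants. The 2D Sobolev embedding $W^{1,q}(\Omega)\hookrightarrow L^{2q/(2-q)}(\Omega)$ for $q<2$ then upgrades these $W^{1,q}$-bounds on $v$ and $z$ to the claimed $L^{2q/(2-q)}$-bounds, completing the proof. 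The only genuinely delicate point in all of this is the parabolic case: one must control the apparently non-integrable singularity of the heat-kernel gradient near $\sigma=0$ inside the time convolution while keeping the constant independent of $t\in(0,T_m)$, and it is precisely there that the admissible range of $q$ in \eqref{q-exp} and the damping term $-v$ (producing the factor $e^{-r}$) are both indispensable.
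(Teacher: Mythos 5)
Your proof follows essentially the same route as the paper's: standard $W^{2,p}$-elliptic regularity plus Sobolev embedding when $\tau_1=0$, the variation-of-constants formula combined with the $L^p$-$L^q$ smoothing estimates of the Neumann heat semigroup when $\tau_1>0$, and then the choice $p=1$ together with the $L^1$-conservation from \eqref{uvwz-l1} and the embedding $W^{1,q}(\Omega)\hookrightarrow L^{2q/(2-q)}(\Omega)$ to obtain \eqref{vz-starting bdd}. The only rough edge --- shared with the paper's own one-line justification --- is the borderline case $p=1$ in the elliptic setting, where the Calder\'on-Zygmund bound $\|v\|_{W^{2,1}}\leq C\|w\|_{L^1}$ is not actually available and one should instead pass through a duality or Green's-function argument (as in the reference \cite{JX18-DCDSB} the paper cites elsewhere) to reach $\|v\|_{W^{1,q}}\leq C(1+\|w\|_{L^1})$ for $q<2$ directly.
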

 \begin{proof}In the case of $\tau_1, \tau_2>0$, using the widely known smoothing $L^p$-$L^q$ estimates of the Neumann heat semigroup $\{e^{t\Delta}\}_{t\geq0}$ in $\Omega$, see, e.g. \cite{Win10-JDE, Cao15} and applying those  estimates to the second and  fourth equation in \eqref{Loop equations}, we can readily deduce \eqref{gradvz-bdd-lp}, cf. \cite{A21, KS08, Xiang15-JDE}. In the case of $\tau_1=\tau_2=0$, the standard well-known $W^{2,p}$-elliptic theory  (see e.g. \cite{ADN59, ADN64,Ladyzenskaja710}) easily lead to \eqref{gradvz-bdd-lp}. Because of the  conservations of $u$ and $w$ in \eqref{uvwz-l1}, we first take $p=1$ in \eqref{q-exp},  and then from \eqref{gradvz-bdd-lp} and Sobolev embedding, we arrive at the desired estimate \eqref{vz-starting bdd}.
 \end{proof}
 Based on Sobolev and H\"{o}lder  inequalities, the following Poincar\'{e}-type  inequality follows. In convex domains, upon an observation of the optimal constant for Poincar\'{e} inequality \cite{PW60}, we find an explicit lower bound for the involving constant.
\begin{lemma}\label{k-const}
Let $\Omega\subset \mathbb{R}^2$ be a  bounded and smooth domain and let $k$ be defined in \eqref{k-def}. Then for any a.e nonnegative function $\varphi\in W^{1,2}(\Omega)$ with  $\bar{\varphi}=1$, one has
\be\label{Poin-const}
\|\varphi-1\|^2_{L^2}\leq k\|\nabla \varphi^{\frac{1}{2}}\|^2_{L^2}.
\ee
If,  furthermore,   $\Omega$ is convex, then  $k\geq\frac{4d^2}{\pi^2}$ with $d$ being the diameter of $\Omega$.
\end{lemma}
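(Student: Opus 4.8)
The plan is to establish \eqref{Poin-const} first and then derive the explicit bound on $k$ in the convex case. For the inequality \eqref{Poin-const}, I would set $\psi = \varphi^{1/2}$, so that $\psi \in W^{1,2}(\Omega)$ since $\varphi \in W^{1,2}(\Omega)$ is nonnegative, and note $\nabla \varphi = 2\psi \nabla \psi$. The target is then to bound $\|\varphi - 1\|_{L^2}^2 = \|\psi^2 - 1\|_{L^2}^2$ in terms of $\|\nabla \psi\|_{L^2}^2$. Writing $\psi^2 - 1 = (\psi - \bar\psi + \bar\psi)^2 - 1$ and expanding, or more directly using $|\psi^2 - 1| \le |\psi^2 - \overline{\psi^2}| + |\overline{\psi^2} - 1|$ together with the constraint $\overline{\psi^2} = \bar\varphi = 1$, the second term drops out, leaving $\|\varphi - 1\|_{L^2} = \|\psi^2 - \overline{\psi^2}\|_{L^2}$. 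Now $\psi^2 - \overline{\psi^2}$ has mean zero, so by the (standard, Sobolev-type) Poincar\'e inequality applied to $\psi^2$ one has $\|\psi^2 - \overline{\psi^2}\|_{L^2} \le C_P \|\nabla(\psi^2)\|_{L^1} = 2C_P \|\psi \nabla \psi\|_{L^1} \le 2C_P \|\psi\|_{L^2}\|\nabla \psi\|_{L^2}$ by Cauchy--Schwarz, where $C_P$ is the optimal $W^{1,1}\!\to\! L^2$ Poincar\'e constant on $\Omega$. Comparing with the definition \eqref{k-def} of $k$ — which is exactly $4|\Omega|$ times the reciprocal of the squared optimal Poincar\'e quotient — one identifies $\|\psi\|_{L^2}^2 \le |\Omega| + \|\psi - \bar\psi\|_{L^2}^2$ and, after absorbing lower-order terms and using the normalization $\overline{\psi^2}=1$ (which forces $\bar\psi \le 1$ and $\|\psi\|_{L^2}^2 = |\Omega|$), obtains $\|\varphi - 1\|_{L^2}^2 \le 4 C_P^2 |\Omega| \,\|\nabla \psi\|_{L^2}^2 = k\|\nabla \varphi^{1/2}\|_{L^2}^2$, where the last equality is precisely how the factor $4|\Omega|$ in \eqref{k-def} is calibrated. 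I would spell out the bookkeeping so that the constant matches \eqref{k-def} exactly rather than up to a harmless factor.

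For the convex case, I would invoke the classical Payne--Weinberger result \cite{PW60}: for a bounded convex domain $\Omega \subset \mathbb{R}^n$ with diameter $d$, every $f \in W^{1,1}(\Omega)$ with $\bar f = 0$ satisfies $\|f\|_{L^1(\Omega)} \le \frac{d}{2}\|\nabla f\|_{L^1(\Omega)}$ — equivalently, the first nonzero Neumann eigenvalue obeys $\mu_1 \ge \pi^2/d^2$. The cleanest route is via the $L^2$ Neumann Poincar\'e inequality $\|f - \bar f\|_{L^2}^2 \le \frac{d^2}{\pi^2}\|\nabla f\|_{L^2}^2$ for convex $\Omega$. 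Applying this with $f = \varphi^{1/2}$ (legitimate since $\varphi^{1/2}\in W^{1,2}$) gives $\|\varphi^{1/2} - \overline{\varphi^{1/2}}\|_{L^2}^2 \le \frac{d^2}{\pi^2}\|\nabla \varphi^{1/2}\|_{L^2}^2$. On the other hand, from the mean-zero Poincar\'e quotient defining $k$ in \eqref{k-def}, one shows by the chain of estimates above that any admissible test for that infimum yields a lower bound $\ge \pi^2/(4d^2)$ for the quotient, hence $k = 4|\Omega|\cdot(\text{inf quotient})^{-1} \ge 4|\Omega| \cdot \frac{4d^2}{4|\Omega|\pi^2}$... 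I would instead argue directly: the best constant in $\|\nabla\varphi\|_{L^1}^2 / \|\varphi - 1\|_{L^2}^2$ over the admissible class, when $\Omega$ is convex, is bounded below using Payne--Weinberger applied to $\varphi - 1$, giving $\|\varphi-1\|_{L^2} \le \frac{d}{2}\|\nabla\varphi\|_{L^1}$ whenever $\bar\varphi = 1$; squaring and comparing with \eqref{k-def} yields $k \le 4|\Omega| \cdot \frac{4}{d^2}\cdot\frac{1}{4|\Omega|}$... so the inequality must in fact go the other way, i.e. $k \ge \frac{4d^2}{\pi^2}$ comes from an \emph{upper} bound on the infimum, obtained by testing \eqref{k-def} with the (near-)extremal function for the convex Poincar\'e inequality (essentially an affine function of a coordinate), for which the quotient approaches $\pi^2|\Omega|/d^2$. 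Thus $k = 4|\Omega|/(\text{inf}) \ge 4|\Omega|/(\pi^2|\Omega|/d^2) = 4d^2/\pi^2$.

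The main obstacle I anticipate is bookkeeping the constants so that the exact coefficient $4|\Omega|$ in the definition \eqref{k-def} comes out correctly — the inequality \eqref{Poin-const} with the \emph{precise} constant $k$ (not just $Ck$) requires the Cauchy--Schwarz step $\|\varphi^{1/2}\nabla\varphi^{1/2}\|_{L^1} \le \|\varphi^{1/2}\|_{L^2}\|\nabla\varphi^{1/2}\|_{L^2}$ to be combined with the normalization $\int_\Omega \varphi = |\Omega|$ (so $\|\varphi^{1/2}\|_{L^2}^2 = |\Omega|$) in exactly the way that produces the factor $4|\Omega|$. Concretely: for admissible $\varphi$, apply the $W^{1,1}\!\to\!L^2$ Poincar\'e inequality to $\varphi$ itself, $\|\varphi - 1\|_{L^2} \le C_\Omega \|\nabla\varphi\|_{L^1}$ with $C_\Omega^{-2} = \inf\{\|\nabla\varphi\|_{L^1}^2/\|\varphi-1\|_{L^2}^2\}$, so that by \eqref{k-def}, $C_\Omega^2 = k/(4|\Omega|)$; then write $\|\nabla\varphi\|_{L^1} = 2\|\varphi^{1/2}\nabla\varphi^{1/2}\|_{L^1} \le 2|\Omega|^{1/2}\|\nabla\varphi^{1/2}\|_{L^2}$, square, and multiply through: $\|\varphi-1\|_{L^2}^2 \le C_\Omega^2 \cdot 4|\Omega|\,\|\nabla\varphi^{1/2}\|_{L^2}^2 = k\|\nabla\varphi^{1/2}\|_{L^2}^2$, which is exactly \eqref{Poin-const}. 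The convex lower bound for $k$ then follows from the Payne--Weinberger estimate \cite{PW60} bounding $C_\Omega$ below (equivalently $k$ below) via the diameter, with the sharp constant $\pi^2/d^2$ realized in the limit of one-dimensional-like extremals; I would present it as: convexity gives $\|\nabla\varphi\|_{L^1}^2 \ge \frac{\pi^2}{d^2}|\Omega|^{-1}\cdot(\ldots)$... actually most transparently, combine $\|\varphi - 1\|_{L^2}^2 \le \frac{d^2}{\pi^2}\|\nabla\varphi\|_{L^2}^2$ (convex $L^2$ Poincar\'e) is the wrong norm, so I will route through the $L^1$ form of Payne--Weinberger directly as indicated, which is the natural match for \eqref{k-def}.
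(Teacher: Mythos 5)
Your derivation of \eqref{Poin-const} is, in its final concrete form, exactly the paper's argument: by the definition \eqref{k-def} the optimal constant $\mu_2$ in $\|\varphi-1\|_{L^2}^2\leq \mu_2\|\nabla\varphi\|_{L^1}^2$ equals $k/(4|\Omega|)$, and then $\|\nabla\varphi\|_{L^1}=2\|\varphi^{1/2}\nabla\varphi^{1/2}\|_{L^1}\leq 2\|\varphi^{1/2}\|_{L^2}\|\nabla\varphi^{1/2}\|_{L^2}=2|\Omega|^{1/2}\|\nabla\varphi^{1/2}\|_{L^2}$ by Cauchy--Schwarz and $\|\varphi^{1/2}\|_{L^2}^2=\int_\Omega\varphi=|\Omega|$. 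The earlier detour through $\psi=\varphi^{1/2}$ and $\overline{\psi^2}$ is harmless since you discard it; this half of the lemma is fine.

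The convex lower bound $k\geq 4d^2/\pi^2$ is where there is a genuine gap. You correctly diagnose that a lower bound on $k$ requires an \emph{upper} bound on the infimum in \eqref{k-def}, but the test function you then propose does not deliver it. For $\varphi=1+\epsilon(x_1-\bar{x}_1)$ the quotient in \eqref{k-def} equals $|\Omega|/\mathrm{Var}(x_1)$, and since a convex $\Omega$ of diameter $d$ satisfies $\mathrm{Var}(x_1)\leq d^2/12<d^2/\pi^2$ already in the most favorable (slab-like) case, the affine test gives a quotient of at least $12|\Omega|/d^2$, which is \emph{larger} than the target $\pi^2|\Omega|/d^2$; the claim that this quotient ``approaches $\pi^2|\Omega|/d^2$'' is false, and no alternative computation is supplied. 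The paper's route is different and you should adopt it: by H\"older, $\|\nabla\varphi\|_{L^1}^2\leq|\Omega|\,\|\nabla\varphi\|_{L^2}^2$, so $\mu_2|\Omega|$ is an admissible constant in the $L^2$ Poincar\'e inequality \eqref{Poincare inequlity1}; comparing with the optimal constant $\mu_1=d^2/\pi^2$ of \cite{PW60} for convex domains gives $\mu_2|\Omega|\geq\mu_1$, i.e. $k=4|\Omega|\mu_2\geq 4d^2/\pi^2$, with no test function needed. (Note this comparison uses the Payne--Weinberger constant as the \emph{optimal} one for the domain at hand, which is how the paper invokes it; your first, abandoned attempt was circling this but with the roles of upper and lower bounds interchanged, which is why it kept reversing direction.)
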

\begin{proof} The validity of \eqref{Poin-const} is proven in   \cite[Lemma 2.3]{GZ98}. We here re-show the simple proof with emphasis on the explicit lower bound of $k$ in convex domains. In such cases, it follows from \cite{PW60} that the optimal constant for the Poincar\'{e} inequality
\begin{equation}\label{Poincare inequlity1}
\|\varphi-1\|^2_{L^2}\leq \mu_1\|\nabla \varphi\|^2_{L^2}
\end{equation}
is given by
$$
\mu_1= \frac{d^2}{\pi^2}.
$$
The 2D Sobolev embedding $W^{1,1}(\Omega)\hookrightarrow L^2(\Omega)$ implies  there exists $\mu_2>0$ such that
\begin{equation}\label{Poincare inequlity2}
\begin{split}
\|\varphi-1\|^2_{L^2}\leq \mu_2\|\nabla \varphi\|^2_{L^1}.
\end{split}
\end{equation}
This shows that $k$ defined in \eqref{k-def} makes sense,  and,  the optimal constant of $\mu_2$ is
\be\label{mu-opt}
\mu_2=\frac{k}{4|\Omega|}.
\ee
Thus, by  H\"{o}lder  inequality and  the optimality of $\mu_1$ in \eqref{Poincare inequlity1}, we have
\begin{equation*}
\begin{split}
\mu_2|\Omega|\geq \mu_1 =\frac{d^2}{\pi^2}.
\end{split}
\end{equation*}
  By \eqref{Poincare inequlity2} with \eqref{mu-opt},  H\"{o}lder inequality and the fact that $\|\varphi\|_{L^1}=|\Omega|$, we deduce
\begin{equation*}
\begin{split}
\|\varphi-1\|^2_{L^2}\leq \mu_2\|\nabla \varphi\|^2_{L^1}&=4\mu_2\|\varphi^{\frac{1}{2}}\cdot\nabla\varphi^{\frac{1}{2}}\|^2_{L^1}\\[0.2cm]
&\leq4\mu_2|\Omega|\cdot\|\nabla \varphi^{\frac{1}{2}}\|^2_{L^2}=k\|\nabla \varphi^{\frac{1}{2}}\|^2_{L^2},
\end{split}
\end{equation*}
which readily  shows  \eqref{Poin-const}  with $k\geq \frac{4d^2}{\pi^2}$, as desired.
\end{proof}
For convenience of reference,  we state the following version of Trudinger-Moser inequality  and the widely-known  2D Gagliardo-Nirenberg interpolation inequality.
\begin{lemma}[Trudinger-Moser inequality  \cite{ A12}]\label{TM-ineq} Let $\Omega\subset \mathbb{R}^2$ be a bounded and smooth domain. Define
 \begin{equation*}
\begin{split}
\pi^*=\left\{
\begin{array}{llll}
8\pi, &\text{  if    } \Omega=B_R(0),\\
4\pi, &\text{otherwise}.
\end{array}
\right.
\end{split}
\end{equation*}
  Then, for any $\epsilon>0$, there exists $C_\epsilon=C(\epsilon,\Omega)>0$ such that
$$
\int_{\Omega}\exp{|f|}\leq C_\epsilon\exp\left\{(\frac{1}{2\pi^*}+\epsilon)\|\nabla f\|^2_{L^2(\Omega)}+\frac{2}{|\Omega|}\|f\|_{L^1(\Omega)}\right\}, \ \ \forall  f\in H^1(\Omega).
 $$
 \end{lemma}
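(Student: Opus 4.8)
The statement is a reformulation of the classical sharp Moser--Trudinger inequality on a planar domain, so the plan is to reduce it to that inequality after splitting off the mean. Write $\bar f=\frac1{|\Omega|}\int_\Omega f$ and $g=f-\bar f$, so that $\int_\Omega g=0$ and $\nabla g=\nabla f$. From $|f|\le|\bar f|+|g|$ and $|\bar f|\le\frac1{|\Omega|}\|f\|_{L^1(\Omega)}$ one gets $\int_\Omega e^{|f|}\le e^{\frac1{|\Omega|}\|f\|_{L^1(\Omega)}}\int_\Omega e^{|g|}$, and since the right-hand coefficient $\frac1{|\Omega|}$ is smaller than the claimed $\frac2{|\Omega|}$, that part of the estimate will follow a fortiori. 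Next I would use Young's inequality $|g|\le\delta g^2+\frac1{4\delta}$ (valid for every $\delta>0$) to obtain $\int_\Omega e^{|g|}\le e^{1/(4\delta)}\int_\Omega e^{\delta g^2}$, which reduces everything to a quadratic-exponential bound for the mean-zero function $g$.

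For that bound I would invoke the Moser--Trudinger inequality in the form: for every subcritical $\alpha<\alpha^*$ there is $C_{MT}=C_{MT}(\alpha,\Omega)$ with $\int_\Omega e^{\alpha h^2}\le C_{MT}$ whenever $h\in H^1(\Omega)$, $\bar h=0$, $\|\nabla h\|_{L^2(\Omega)}\le1$, where $\alpha^*=2\pi$ on a general bounded smooth domain and $\alpha^*=4\pi$ for radially symmetric functions on a ball; in both cases $\alpha^*=\pi^*/2$. Given $\epsilon>0$, fix $\alpha=\alpha(\epsilon)<\alpha^*$ with $\frac1{4\alpha}\le\frac1{2\pi^*}+\epsilon$; applying the inequality to $h=g/\|\nabla g\|_{L^2(\Omega)}$ (the case $\nabla g\equiv0$, i.e. $g\equiv0$, being trivial) with the choice $\delta=\alpha/\|\nabla f\|_{L^2(\Omega)}^2$ yields $\int_\Omega e^{\delta g^2}\le C_{MT}$ together with $e^{1/(4\delta)}=\exp\{\frac1{4\alpha}\|\nabla f\|_{L^2(\Omega)}^2\}\le\exp\{(\frac1{2\pi^*}+\epsilon)\|\nabla f\|_{L^2(\Omega)}^2\}$. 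Chaining the three estimates gives the asserted bound with $C_\epsilon$ depending only on $\epsilon$ and $\Omega$; if one is willing to use the sharp (endpoint) form of Moser--Trudinger at $\alpha=\alpha^*$ one even gets the inequality with $\epsilon=0$.

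The reduction above is routine; the one genuinely nontrivial ingredient is the value of the Moser--Trudinger constant for $H^1(\Omega)$ without boundary conditions, and in particular its doubling to $4\pi$ for radial functions on a ball --- which is precisely the source of the dichotomy $\pi^*=8\pi$ versus $4\pi$. Proving those constants requires Schwarz symmetrization together with Moser's one-dimensional rearrangement/capacity argument and is lengthy, so, as the inequality is used here only as a black box, I would simply cite it as the authors do via \cite{A12}; I would also flag explicitly the radial-symmetry convention tacitly in force in the ball case, under which Lemma~\ref{TM-ineq} is applied later in the paper.
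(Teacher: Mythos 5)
Your reduction is correct, but note that the paper offers no proof of Lemma~\ref{TM-ineq} at all: it is quoted verbatim as a known result from \cite{A12}, so there is no in-paper argument to compare against. What you supply is the standard (and sound) bridge from the classical quadratic-exponential Moser--Trudinger inequality to the stated linear-exponential form: splitting off the mean costs only the harmless factor $e^{\frac{1}{|\Omega|}\|f\|_{L^1}}$ (dominated by the stated $\frac{2}{|\Omega|}\|f\|_{L^1}$), Young's inequality $|g|\le\delta g^2+\frac1{4\delta}$ with the optimized $\delta=\alpha/\|\nabla f\|_{L^2}^2$ converts the subcritical exponent $\alpha<\alpha^*=\pi^*/2$ into the coefficient $\frac1{4\alpha}\le\frac1{2\pi^*}+\epsilon$, and the mean-zero normalization $h=g/\|\nabla g\|_{L^2}$ is exactly what the sharp-constant statement requires. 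The genuinely hard ingredient --- that $\alpha^*=2\pi$ for mean-zero $H^1$ functions on a general bounded smooth domain and $4\pi$ in the radial ball case --- you leave to the literature, which is precisely what the authors do by citing \cite{A12}, so nothing is lost relative to the paper. Your closing caveat is also well taken and worth recording: the $\pi^*=8\pi$ value on $B_R(0)$ comes from excluding boundary concentration and in \cite{A12} is established for radially symmetric functions, a restriction the paper's statement (and its later application in Lemma~\ref{u+w-log-bdd-lem}) suppresses; flagging that tacit convention is the one substantive point your write-up adds beyond the routine reduction.
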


 \begin{lemma}[2D Gagliardo-Nirenberg interpolation inequality \cite{Fri-bk,JX18-DCDSB, LL16}]\label{G-N}
Let $\Omega\subset \mathbb{R}^2$ be a bounded  and  smooth domain. Let $i$ and $j$ be any integers satisfying $0\leq i<j$, and let $0<p, q, r\leq \infty$, and $\frac{i}{j}\leq \theta\leq 1$ such that
$$
\frac{1}{p}-\frac{i}{2}=\theta\left(\frac{1}{q}-\frac{j}{2}\right)+(1-\theta)\frac{1}{r}.
$$
Then  there exists  $C>0$ depending only on $\Omega, p, q,r, i$ and $j$ such that
\begin{equation}\label{G-N-I}
\|D^i f\|_{L^p}\leq C\left(\|D^j f\|_{L^q}^\theta\|f\|_{L^r}^{1-\theta}+\|f\|_{L^r}\right), \quad \forall f\in W^{j,q}
\end{equation}
with the following exception: if $1<q<\infty$ and $j-i-\frac{2}{q}$ is a nonnegative integer, then \eqref{G-N-I}  holds only  for $\theta$ satisfying $\frac{i}{j}\leq \theta<1$.
\end{lemma}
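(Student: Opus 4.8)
The plan is to read Lemma~\ref{G-N} as the classical Gagliardo--Nirenberg (Nirenberg) interpolation inequality in dimension two, and to prove it in two stages: a scale-invariant version on all of $\mathbb{R}^2$ for smooth compactly supported functions, followed by a transfer to the bounded smooth domain $\Omega$ via a Sobolev (Stein) extension operator. The extension step is precisely what is responsible for the additive lower-order term $\|f\|_{L^r}$ on the right-hand side of \eqref{G-N-I}.

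For the whole-space stage I would first reduce, by density, to $f\in C_c^\infty(\mathbb{R}^2)$ and establish
\[
\|D^i f\|_{L^p(\mathbb{R}^2)}\le C\,\|D^j f\|_{L^q(\mathbb{R}^2)}^{\theta}\,\|f\|_{L^r(\mathbb{R}^2)}^{1-\theta}
\]
under the stated scaling relation with $i/j\le\theta\le1$ (and $\theta<1$ in the exceptional case). The engine is the elementary planar inequality $\|g\|_{L^2(\mathbb{R}^2)}\le C\|\nabla g\|_{L^1(\mathbb{R}^2)}$, obtained by bounding $|g(x_1,x_2)|$ separately by $\int_{\mathbb{R}}|\partial_1 g|\,dx_1$ and by $\int_{\mathbb{R}}|\partial_2 g|\,dx_2$, multiplying the two bounds, integrating in $(x_1,x_2)$ and using Cauchy--Schwarz. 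Substituting $g=|f|^{\gamma}$ for a suitable $\gamma>1$ and invoking H\"older's inequality upgrades this to the one-derivative case $\|f\|_{L^p}\le C\|\nabla f\|_{L^q}^{\theta}\|f\|_{L^r}^{1-\theta}$. The general order $0\le i<j$ then follows from Nirenberg's slicing/iteration scheme: the one-dimensional building block
\[
\int_{\mathbb{R}}|h'|^p\,dx\le C\Big(\int_{\mathbb{R}}|h''|^q\,dx\Big)^{\theta p/2}\Big(\int_{\mathbb{R}}|h|^r\,dx\Big)^{(1-\theta)p},
\]
proved by the integration by parts $\int|h'|^p=-\int h\,\partial_x(|h'|^{p-2}h')$ together with H\"older and Young, is applied coordinate-by-coordinate and the resulting one-dimensional estimates are recombined (equivalently, one exploits the log-convexity of $\ell\mapsto\log\|D^\ell f\|_{L^{s(\ell)}}$ along the interpolation segment). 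The endpoint $\theta=1$ has to be discarded exactly when $j-i-\tfrac2q$ is a nonnegative integer, since then the target would be an $L^\infty$- or $C^m$-type space into which the corresponding planar Sobolev space fails to embed.

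For the transfer to $\Omega$, since $\Omega$ is bounded and smooth I would fix a Stein total extension operator $E\colon W^{k,s}(\Omega)\to W^{k,s}(\mathbb{R}^2)$, simultaneously bounded for the finitely many exponent pairs in play with norm depending only on $\Omega$ and satisfying $Ef\equiv f$ on $\Omega$. Applying the whole-space inequality to $Ef$ and restricting to $\Omega$ gives
\[
\|D^i f\|_{L^p(\Omega)}\le\|D^i(Ef)\|_{L^p(\mathbb{R}^2)}\le C\,\|Ef\|_{W^{j,q}(\mathbb{R}^2)}^{\theta}\,\|Ef\|_{L^r(\mathbb{R}^2)}^{1-\theta}\le C\,\|f\|_{W^{j,q}(\Omega)}^{\theta}\,\|f\|_{L^r(\Omega)}^{1-\theta}.
\]
It remains to replace $\|f\|_{W^{j,q}(\Omega)}$ by $\|D^j f\|_{L^q(\Omega)}+\|f\|_{L^q(\Omega)}$ via interpolation of the intermediate derivatives on the bounded domain (a consequence of the already-established lower-order cases together with Ehrling's lemma and the compactness of $W^{j,q}(\Omega)\hookrightarrow W^{j-1,q}(\Omega)$), to bound $\|f\|_{L^q(\Omega)}$ by $C\|f\|_{L^r(\Omega)}$ when $q\le r$ and to absorb it by Young's inequality otherwise, and finally to split $a^\theta b^{1-\theta}\le\theta a+(1-\theta)b$; this yields \eqref{G-N-I} in the stated additive form.

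The main obstacle I expect is the general $(i,j)$ case with its exceptional exponent window: making Nirenberg's iteration/slicing argument fully rigorous and pinning down precisely when $\theta=1$ must be excluded. For the uses of Lemma~\ref{G-N} in this paper, however, only a few concrete small values of $i,j,p,q,r$ in $\mathbb{R}^2$ are ever required, so in each application one may bypass the full iteration and deduce the needed instance directly from the one-derivative inequality combined with the Sobolev embeddings $W^{1,s}(\Omega)\hookrightarrow L^{2s/(2-s)}(\Omega)$ for $1\le s<2$ and $W^{1,s}(\Omega)\hookrightarrow C^0(\bar\Omega)$ for $s>2$.
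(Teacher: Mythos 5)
The paper does not prove Lemma~\ref{G-N} at all: it is quoted as a known result with the proof delegated to the cited references (Friedman's book and the appendices of the later papers), and your outline --- whole-space Gagliardo--Nirenberg via the $W^{1,1}(\mathbb{R}^2)\hookrightarrow L^2(\mathbb{R}^2)$ embedding and Nirenberg's slicing/iteration, followed by transfer to $\Omega$ through a Stein extension and Ehrling-type interpolation of intermediate derivatives, which produces the additive $\|f\|_{L^r}$ term --- is exactly the standard argument those sources contain. So your proposal is correct in approach and consistent with the paper's (cited) proof; the only caveat is that it remains a sketch, since the general $(i,j)$ iteration and the precise bookkeeping in the exceptional case $1<q<\infty$, $j-i-\tfrac{2}{q}\in\mathbb{Z}_{\ge 0}$ are acknowledged but not carried out.
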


\section{Product of masses on boundedness vs blow-up}
In the section, we shall prove  boundedness,  gradient estimates and blow-up as stated in (B1), (B2) and (B4) of classical solutions to the IBVP \eqref{Loop equations}. We divide   this section into four subsections to make the flow of our ideas more smooth.
\subsection{From $L^1$ to $L^2$} We start with the following energy identity, which plays a crucial role for our purpose, especially when $\tau_1+\tau_2=0$ or $\chi_2=0$.
\begin{lemma}\label{Lyapunov-f-lemma}The local-in-time classical solution $(u,v,w,z)$ of \eqref{Loop equations} fulfills
\be\label{Lyapunov-f}
\begin{split}
\mathcal{F}^\prime(t)=&-(\tau_1+\tau_2)\chi_1\chi_2\int_\Omega v_tz_t-\tau_1\chi_1\chi_3\int_\Omega v_t^2-\chi_2\int_\Omega u|\nabla(\ln u-\chi_1v)|^2\\
&\ \ -\chi_1\int_\Omega w|\nabla(\ln w-\chi_2z-\chi_3v)|^2,\ \ t\in(0, T_m),
\end{split}
\ee
where $\mathcal{F}(t)$ is defined by
\be\label{F-def}
\begin{split}
\mathcal{F}(t)=&\chi_2\int_\Omega u\ln u+\chi_1\int_\Omega w\ln w-\chi_1\chi_2\int_\Omega \left(uv+wz\right)-\chi_1\chi_3\int_\Omega wv\\
&+\chi_1\chi_2\int_\Omega \left(vz+ \nabla v \cdot\nabla z\right)+\frac{\chi_1\chi_3}{2}\int_\Omega \left(v^2+ |\nabla v|^2\right),\ \ t\in(0, T_m).
\end{split}
\ee
\end{lemma}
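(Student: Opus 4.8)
The plan is to establish \eqref{Lyapunov-f} by a direct ``entropy testing plus integration by parts'' computation that exploits the reciprocal coupling built into the second and fourth equations of \eqref{Loop equations}, through which $w$ is tied to $v$ and $u$ to $z$. All the manipulations below are legitimate on $(0,T_m)$ because $(u,v,w,z)$ is there a positive classical solution — so $\ln u,\ln w$ are smooth and $\int_\Omega u|\nabla\ln u|^2=\int_\Omega|\nabla u|^2/u<\infty$ — and because the homogeneous Neumann conditions $\partial_\nu u=\partial_\nu v=\partial_\nu w=\partial_\nu z=0$, which are inherited by $v_t$ and $z_t$, make every boundary term vanish.

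\emph{Step 1 (entropy dissipation).} First I would test the first equation of \eqref{Loop equations} with $\ln u-\chi_1 v$ and the third with $\ln w-\chi_2 z-\chi_3 v$. Since $\nabla u-\chi_1 u\nabla v=u\nabla(\ln u-\chi_1 v)$ and $\nabla w-\chi_2 w\nabla z-\chi_3 w\nabla v=w\nabla(\ln w-\chi_2 z-\chi_3 v)$, integration by parts yields
\[
\int_\Omega u_t(\ln u-\chi_1 v)=-\int_\Omega u\bigl|\nabla(\ln u-\chi_1 v)\bigr|^2,\qquad \int_\Omega w_t(\ln w-\chi_2 z-\chi_3 v)=-\int_\Omega w\bigl|\nabla(\ln w-\chi_2 z-\chi_3 v)\bigr|^2,
\]
the right-hand sides being (up to the factors $\chi_2$ and $\chi_1$) the two dissipation integrals that appear in \eqref{Lyapunov-f}. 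Using the mass conservations of Lemma~\ref{2.l1} to write $\tfrac{d}{dt}\int_\Omega u\ln u=\int_\Omega u_t\ln u$ and $\tfrac{d}{dt}\int_\Omega w\ln w=\int_\Omega w_t\ln w$, then multiplying the first identity by $\chi_2$, the second by $\chi_1$, and adding, I obtain
\begin{equation*}
\begin{split}
\chi_2\tfrac{d}{dt}\!\int_\Omega u\ln u+\chi_1\tfrac{d}{dt}\!\int_\Omega w\ln w=&\ \chi_1\chi_2\!\int_\Omega(u_tv+w_tz)+\chi_1\chi_3\!\int_\Omega w_tv\\
&-\chi_2\!\int_\Omega u|\nabla(\ln u-\chi_1v)|^2-\chi_1\!\int_\Omega w|\nabla(\ln w-\chi_2z-\chi_3v)|^2.
\end{split}
\end{equation*}

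\emph{Steps 2--3 (absorbing the coupling).} Next I would add $\tfrac{d}{dt}$ of $-\chi_1\chi_2\int_\Omega(uv+wz)-\chi_1\chi_3\int_\Omega wv$; its $\int_\Omega u_tv$, $\int_\Omega w_tz$ and $\int_\Omega w_tv$ pieces cancel those produced in Step~1, leaving on the right only $-\chi_1\chi_2\int_\Omega(uv_t+wz_t)-\chi_1\chi_3\int_\Omega wv_t$ besides the dissipation integrals. Into these three integrals I substitute $u=\tau_2 z_t-\Delta z+z$ and $w=\tau_1 v_t-\Delta v+v$, read off from the fourth and second equations of \eqref{Loop equations}; this extracts exactly $-(\tau_1+\tau_2)\chi_1\chi_2\int_\Omega v_tz_t-\tau_1\chi_1\chi_3\int_\Omega v_t^2$ and leaves a ``mixed'' remainder built from the six integrals $\int_\Omega v_t\Delta z$, $\int_\Omega z_t\Delta v$, $\int_\Omega v_t\Delta v$, $\int_\Omega z v_t$, $\int_\Omega v z_t$ and $\int_\Omega v v_t$. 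Finally, adding $\tfrac{d}{dt}$ of $\chi_1\chi_2\int_\Omega(vz+\nabla v\cdot\nabla z)+\tfrac{\chi_1\chi_3}{2}\int_\Omega(v^2+|\nabla v|^2)$ and integrating by parts so that $\int_\Omega\nabla v_t\cdot\nabla z=-\int_\Omega v_t\Delta z$, $\int_\Omega\nabla v\cdot\nabla z_t=-\int_\Omega z_t\Delta v$ and $\int_\Omega\nabla v\cdot\nabla v_t=-\int_\Omega v_t\Delta v$, each of those six mixed integrals reappears with the opposite sign, so they cancel pairwise and precisely \eqref{Lyapunov-f} is left.

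The computation presents no genuine obstacle; the one point requiring care is the sign bookkeeping in Steps~2--3, namely verifying that each of the six mixed integrals above arrives with opposite signs from the $(uv_t,\,wz_t,\,wv_t)$-substitution on the one hand and from the $v$--$z$ energy term $\chi_1\chi_2\int_\Omega(vz+\nabla v\cdot\nabla z)+\tfrac{\chi_1\chi_3}{2}\int_\Omega(v^2+|\nabla v|^2)$ on the other, and therefore cancels. It is also worth noting that the identity is meaningful when $\tau_1=0$ or $\tau_2=0$: then $v_t$ (resp.\ $z_t$) is still a well-defined smooth function, obtained by differentiating the corresponding elliptic relation, the substitution reduces to $w=-\Delta v+v$ (resp.\ $u=-\Delta z+z$), and the matching $\tau_i$-weighted term simply drops out, consistently with the factors $\tau_1+\tau_2$ and $\tau_1$ in \eqref{Lyapunov-f}.
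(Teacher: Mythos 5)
Your proposal is correct and follows essentially the same route as the paper's proof: testing the $u$- and $w$-equations with $\ln u-\chi_1v$ and $\ln w-\chi_2z-\chi_3v$, substituting $u=\tau_2 z_t-\Delta z+z$ and $w=\tau_1 v_t-\Delta v+v$ into the resulting coupling integrals, and absorbing the mixed terms into total time derivatives via integration by parts. The only cosmetic difference is that the paper organizes the cancellation by writing the two identities \eqref{mixed-id1}--\eqref{mixed-id2} directly as time derivatives rather than tracking the six mixed integrals separately, but the bookkeeping is identical.
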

\begin{proof}Multiplying the first equation in \eqref{Loop equations}  by $\ln u-\chi_1v$ and integrating by parts over $\Omega$, we obtain upon noticing $\int_\Omega u_t=0$ that
\be\label{u-id1}
\begin{split}
-\int_\Omega u|\nabla(\ln u-\chi_1v)|^2&=\int_\Omega u_t(\ln u-\chi_1v)\\
&=\frac{d}{dt}\int_\Omega\left(u\ln u-\chi_1uv\right)+\chi_1\int_\Omega uv_t.
 \end{split}
\ee
Similarly, multiplying the third equation in \eqref{Loop equations} by $\ln w-\chi_2z-\chi_3v$, we see that
\be\label{w-id1}
\begin{split}
&-\int_\Omega w|\nabla(\ln w-\chi_2z-\chi_3v)|^2\\
&=\frac{d}{dt}\int_\Omega\left(w\ln w-\chi_2wz-\chi_3wv\right)+\chi_2\int_\Omega wz_t+\chi_3\int_\Omega wv_t.
\end{split}
\ee
Next, from the facts that $u=\tau_2z_t-\Delta z+z$ and $w=\tau_1v_t-\Delta v+v$ due to \eqref{Loop equations}, we deduce from integration by parts that
\be\label{mixed-id1}
\begin{split}
\int_\Omega uv_t+\int_\Omega wz_t&=\int_\Omega \left(\tau_2z_t-\Delta z+z\right)v_t+\int_\Omega (\tau_1v_t-\Delta v+v)z_t\\
&=(\tau_1+\tau_2)\int_\Omega v_tz_t+\frac{d}{dt}\int_\Omega\nabla v\cdot\nabla z+\frac{d}{dt}\int_\Omega vz
 \end{split}
\ee
and that
\be\label{mixed-id2}
\int_\Omega wv_t=\int_\Omega (\tau_1v_t-\Delta v+v)v_t=\tau_1\int_\Omega v_t^2+\frac{1}{2}\frac{d}{dt}\int_\Omega\left(v^2+|\nabla v|^2\right).
\ee
Finally,  multiplying \eqref{u-id1} by $\chi_2$ and \eqref{w-id1} by $\chi_1$ and then using \eqref{mixed-id1} and \eqref{mixed-id2}, we can readily end up with \eqref{Lyapunov-f} with $\mathcal{F}$ given by \eqref{F-def}.
\end{proof}

\begin{lemma}\label{u+w-log-bdd-lem} When  $\tau_1= \tau_2=0$, with $\pi^*$ defined in Lemma  \ref{TM-ineq}, assume that
\be\label{key-cond}
\begin{split}
  m_1m_2\chi_1\chi_2< \left(\pi^*-m_2\chi_3\right)\pi^*.
\end{split}
\ee
Then there exists $C=C(u_0, w_0, \Omega)>0$ such that
\be\label{u+w-log-bdd}
\|(u\ln u)(t)\|_{L^1}+\|(w\ln w)(t)\|_{L^1}+\|v(t)\|_{H^1}+\|z(t)\|_{H^1}\leq C, \ \ \ \forall t\in(0, T_m).
\ee
\end{lemma}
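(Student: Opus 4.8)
The plan is to turn the energy identity of Lemma~\ref{Lyapunov-f-lemma} into a coercive a priori bound. Since $\tau_1=\tau_2=0$, the first two terms on the right of \eqref{Lyapunov-f} drop out, hence $\mathcal{F}'(t)\le0$ on $(0,T_m)$ and $\mathcal{F}(t)\le\mathcal{F}(0)$ for all $t\in(0,T_m)$, with $\mathcal{F}(0)$ finite by \eqref{initi data reg}. I would then simplify $\mathcal{F}$ using that $\tau_1=\tau_2=0$ forces $w=-\Delta v+v$ and $u=-\Delta z+z$: integration by parts turns \eqref{F-def} into
\[
\mathcal{F}(t)=\chi_2\int_\Omega u\ln u+\chi_1\int_\Omega w\ln w-\chi_1\chi_2\,\langle v,z\rangle_{H^1}-\tfrac{\chi_1\chi_3}{2}\|v\|_{H^1}^2,
\]
where $\langle v,z\rangle_{H^1}=\int_\Omega(\nabla v\cdot\nabla z+vz)=\int_\Omega uv=\int_\Omega wz$ while $\|v\|_{H^1}^2=\int_\Omega wv$ and $\|z\|_{H^1}^2=\int_\Omega uz$.

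The crux is to control the two self-energies $\int_\Omega wv$ and $\int_\Omega uz$ by the entropies, with the sharp constant governed by the Trudinger--Moser inequality of Lemma~\ref{TM-ineq}. Using its Legendre-dual form (cf.\ \cite{GZ98}), the positivity $v,z\ge0$ from Lemma~\ref{2.l1}, and $\|v\|_{L^1}=m_2$, $\|z\|_{L^1}=m_1$ (again Lemma~\ref{2.l1}), one obtains, for every $\epsilon>0$, a constant $C_\epsilon>0$ with
\[
\int_\Omega wv\le\Big(\tfrac{2m_2}{\pi^*}+\epsilon\Big)\int_\Omega w\ln w+C_\epsilon,\qquad
\int_\Omega uz\le\Big(\tfrac{2m_1}{\pi^*}+\epsilon\Big)\int_\Omega u\ln u+C_\epsilon .
\]
For the cross term I would use Cauchy--Schwarz in $H^1$ together with Young's inequality with a free parameter $\gamma>0$, which gives
\[
-\chi_1\chi_2\langle v,z\rangle_{H^1}-\tfrac{\chi_1\chi_3}{2}\|v\|_{H^1}^2\ge-\tfrac{\chi_1\chi_2}{2\gamma}\int_\Omega uz-\tfrac{\chi_1}{2}(\chi_2\gamma+\chi_3)^+\int_\Omega wv,
\]
the last coefficient being kept nonnegative so that the repulsive case $\chi_3<0$ only helps. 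Combining the last three displays with $\mathcal{F}(t)\le\mathcal{F}(0)$ yields, after fixing $\epsilon$ small,
\[
\Big(\chi_2-\tfrac{\chi_1\chi_2}{2\gamma}\big(\tfrac{2m_1}{\pi^*}+\epsilon\big)\Big)\int_\Omega u\ln u
+\Big(\chi_1-\tfrac{\chi_1}{2}(\chi_2\gamma+\chi_3)^+\big(\tfrac{2m_2}{\pi^*}+\epsilon\big)\Big)\int_\Omega w\ln w\le C .
\]

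This is where \eqref{key-cond} is used: for $\epsilon$ small both left-hand coefficients are strictly positive provided $\gamma$ can be chosen with $\frac{\chi_1 m_1}{\pi^*}<\gamma<\frac{\pi^*-m_2\chi_3}{m_2\chi_2}$, and such a $\gamma$ exists exactly when $\chi_1\chi_2 m_1 m_2<(\pi^*-m_2\chi_3)\pi^*$, i.e.\ under \eqref{key-cond} (which also forces $\pi^*-m_2\chi_3>0$ since $m_1,m_2>0$). Since moreover $\int_\Omega u\ln u,\int_\Omega w\ln w\ge-|\Omega|/e$, the last display then provides uniform upper bounds on these two entropies on $(0,T_m)$, hence $\|(u\ln u)(t)\|_{L^1}+\|(w\ln w)(t)\|_{L^1}\le C$; feeding these back into the two Trudinger--Moser estimates above bounds $\|v(t)\|_{H^1}^2=\int_\Omega wv$ and $\|z(t)\|_{H^1}^2=\int_\Omega uz$, which proves \eqref{u+w-log-bdd}. (Alternatively, \eqref{vz-starting bdd} already reduces the $H^1$-bounds to gradient bounds.)

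The step I expect to be the main obstacle is the sharp entropy inequality $\int_\Omega wv\le(\tfrac{2m_2}{\pi^*}+\epsilon)\int_\Omega w\ln w+C_\epsilon$ and its $u$--$z$ analogue: one must extract from Lemma~\ref{TM-ineq} the Legendre-dual $L\log L$-versus-$H^1$-energy estimate with the critical constant governed by $\pi^*$ for the operator $(I-\Delta)^{-1}$, and then carry out the bookkeeping so that the two absorbing coefficients become positive precisely under \eqref{key-cond}. A minor, routine point is that the entropies are only bounded below (not sign-definite), so absorption is possible only after invoking the $-|\Omega|/e$ lower bound.
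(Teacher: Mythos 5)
Your proposal is correct and rests on the same two pillars as the paper's proof: the monotonicity of $\mathcal{F}$ from Lemma \ref{Lyapunov-f-lemma} with $\tau_1=\tau_2=0$, the elliptic identities $\int_\Omega wv=\|v\|_{H^1}^2$, $\int_\Omega uz=\|z\|_{H^1}^2$, $\int_\Omega uv=\int_\Omega wz=\langle v,z\rangle_{H^1}$, and the Trudinger--Moser inequality with the sharp constant $\pi^*$. Where you differ is in the bookkeeping of the absorption step. The paper fixes $a=\pi^*/m_1$, $b=\pi^*/m_2$, applies Jensen plus Trudinger--Moser directly to $-\chi_2\int u\ln\frac{e^{az}}{u}-\chi_1\int w\ln\frac{e^{bv}}{w}$ while keeping $\|\nabla v\|_{L^2}^2,\|\nabla z\|_{L^2}^2$ explicit, and then verifies that the resulting $2\times2$ quadratic form in $(\nabla v,\nabla z)$ (the constants $A,B$ of \eqref{AB-def}) is positive definite under \eqref{key-cond}; this yields the $H^1$-bounds first and the entropy bounds second. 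You instead package Jensen, Trudinger--Moser and the self-absorption $\|\nabla v\|_{L^2}^2\le\int_\Omega wv$ into the dual estimate $\int_\Omega wv\le(\tfrac{2m_2}{\pi^*}+\epsilon)\int_\Omega w\ln w+C_\epsilon$ (and its $u$--$z$ analogue), split the cross term with a single Young parameter $\gamma$, and absorb at the level of entropies, recovering the $H^1$-bounds afterwards. I checked that your dual inequality does follow from Lemma \ref{TM-ineq} exactly as you indicate (optimizing the entropy-duality parameter gives the constant $\tfrac{2m_2}{\pi^*}+O(\epsilon)$ after absorbing $\|\nabla v\|_{L^2}^2\le\int_\Omega wv$), and your existence condition for $\gamma$, namely $\tfrac{\chi_1m_1}{\pi^*}<\gamma<\tfrac{\pi^*-m_2\chi_3}{m_2\chi_2}$, reproduces \eqref{key-cond} exactly, including the automatic positivity of $\pi^*-m_2\chi_3$ and the favorable sign of the repulsive case $\chi_3<0$. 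Your version arguably isolates the role of $\pi^*$ more transparently (one clean $L\log L$-versus-potential-energy inequality per species and one free parameter, versus the paper's two parameters $a,b$ and a quadratic-form positivity check), at the cost of having to establish the dual Trudinger--Moser estimate as a separate step; the paper's route avoids that intermediate lemma but requires the slightly more opaque computation \eqref{AB-def}--\eqref{F-fin-sub}.
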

\begin{proof}
For our later purpose, thanks to \eqref{key-cond}, we first pick positive constants $a, b$ and $\epsilon$ according to
\be\label{ab-def}
a=\frac{\pi^*}{m_1}, \ \ b=\frac{\pi^*}{m_2}, \ \ \epsilon=\frac{(\pi^*-m_2\chi_3)\pi^*-m_1m_2\chi_1\chi_2}{6\left(\pi^*\right)^3},
\ee
and then,  by direct but tedious computations, we find that
\be\label{AB-def}
\begin{cases}
A:=[(1-2\pi^*\epsilon)\frac{\pi^*}{2m_2}-\frac{\chi_3}{2}]\chi_1>0, \ \ B:=\left(1-2\pi^*\epsilon\right)\frac{\pi^*\chi_2}{2m_1}>0,\\[0.25cm]
 (A-\frac{\chi_1^2\chi_2^2}{4B})>0, \ \ \ (B-\frac{\chi_1^2\chi_2^2}{4A})>0.
 \end{cases}
\ee
Since  $(\tau_1,\tau_2)=(0,0)$, we first infer from \eqref{Loop equations} that
\be\label{ids-int}
\begin{cases}
  \int_\Omega |\nabla z|^2+\int_\Omega z^2=\int_\Omega uz,\ \ \int_\Omega |\nabla v|^2+\int_\Omega v^2=\int_\Omega wv,\\[0.25cm]
  \int_\Omega \nabla v\cdot\nabla z+\int_\Omega vz=\int_\Omega wz,\ \ \int_\Omega \nabla v\cdot\nabla z+\int_\Omega vz=\int_\Omega uv;
  \end{cases}
\ee
then, by the definition of $\mathcal{F}$ in  \eqref{F-def} and the choices of $a,b$ in \eqref{ab-def}, we deduce   that
\be\label{V-sim-def}
\begin{split}
\mathcal{F}(t)& =\chi_2\int_\Omega u\ln u+\chi_1\int_\Omega w\ln w-\chi_1\chi_2\int_\Omega wz-\frac{\chi_1\chi_3}{2}\int_\Omega \left(v^2+ |\nabla v|^2\right)\\
&\ \ =\chi_2\int_\Omega \left(u\ln u-a uz\right)+\chi_1\int_\Omega \left(w\ln w-b wv\right)\\
&\ \ +a\chi_2\int_\Omega uz+b\chi_1\int_\Omega wv-\chi_1\chi_2\int_\Omega wz-\frac{\chi_1\chi_3}{2}\int_\Omega \left(v^2+ |\nabla v|^2\right)\\
&\ \ =-\chi_2\int_\Omega u\ln \frac{e^{az}}{u}-\chi_1\int_\Omega w\ln \frac{e^{bv}}{w}+\left(b-\frac{\chi_3}{2}\right)\chi_1\int_\Omega \left(v^2+ |\nabla v|^2\right)\\
&\ \ \ \  -\chi_1\chi_2\int_\Omega \left(vz+\nabla v\cdot\nabla z\right)+a\chi_2\int_\Omega\left(z^2+|\nabla z|^2\right).
\end{split}
\ee
 Observe that $-\ln s$ is a convex function in $s$, $\int_\Omega\frac{u}{m_1}=1$ and $\int_\Omega\frac{w}{m_2}=1$ due to mass conservations of $u$ and $w$. Then Jensen's inequality tells us that
\begin{equation*}{\label{4.14}}
\begin{split}
-\ln \left(\frac{1}{m_1}\int_\Omega e^{az}\right)=
-\ln \int_\Omega\frac{e^{az}}{u}\frac{u}{m_1}\leq
-\frac{1}{m_1}\int_\Omega u\ln \frac{e^{a z}}{u}
\end{split}
\end{equation*}
and, similarly,
\begin{equation*}{\label{4.14}}
\begin{split}
-\ln \left(\frac{1}{m_2}\int_\Omega e^{bv}\right)\leq
-\frac{1}{m_2}\int_\Omega w\ln \frac{e^{bv}}{w}.
\end{split}
\end{equation*}
Now,   for the specifications  of $a,b, \epsilon$ in \eqref{ab-def}, we apply the Trudinger-Morser inequality in Lemma \ref{TM-ineq}  along  with the  boundedness  $\| z\|_{L^1}=\| u_0\|_{L^1}$ and $\| v\|_{L^1}=\| w_0\|_{L^1}$  due to $(\tau_1,\tau_2)=(0,0)$ to find a  $C=C(\Omega)>0$ such that
\be\label{TM-involves1}
\begin{split}
&-\chi_2\int_\Omega u\ln \frac{e^{az}}{u}-\chi_1\int_\Omega w\ln \frac{e^{bv}}{w}\\
&\geq -m_1\chi_2\ln \left(\frac{1}{m_1}\int_\Omega e^{az}\right)-m_2\chi_1\ln \left(\frac{1}{m_2}\int_\Omega e^{bv}\right)\\
&\geq -m_1\chi_2\left[\ln \frac{C}{m_1}+ (\frac{1}{2\pi^*}+\epsilon)a^2\|\nabla z\|^2_{L^2}+\frac{2a}{|\Omega|}\| z\|_{L^1}\right]\\
&\ \  -m_2\chi_1\left[\ln \frac{C}{m_2}+ (\frac{1}{2\pi^*}+\epsilon)b^2\|\nabla v\|^2_{L^2}+\frac{2b}{|\Omega|}\| v\|_{L^1}\right]\\
&\geq - m_1\chi_2a^2(\frac{1}{2\pi^*}+\epsilon)\|\nabla z\|^2_{L^2}-m_2\chi_1b^2(\frac{1}{2\pi^*}+\epsilon)\|\nabla v\|^2_{L^2}-D,
\end{split}
\ee
where $D$ is a finite number and is defined by
$$
D=m_1\chi_2\left[\ln \frac{C}{m_1}+\frac{2a}{|\Omega|}\| u_0\|_{L^1}\right]+m_2\chi_1\left[\ln \frac{C}{m_2}+\frac{2b}{|\Omega|}\| w_0\|_{L^1}\right].
$$
Substituting \eqref{TM-involves1} into \eqref{V-sim-def} and using \eqref{AB-def}, we conclude that
\be\label{F-fin-sub}
\begin{split}
\mathcal{F}(t)&-\int_\Omega\left[\left(\frac{\pi^*}{m_2}-\frac{\chi_3}{2}\right)\chi_1 v^2-\chi_1\chi_2vz+\frac{\pi^*\chi_2}{m_1}z^2\right]+D\\
&\geq \int_\Omega\left(A|\nabla v|^2-\chi_1\chi_2 \nabla v\cdot\nabla z+ B|\nabla z|^2\right)\\
&=\int_\Omega\left(\sqrt{A}\nabla v-\frac{\chi_1\chi_2}{2\sqrt{A}} \nabla z\right)^2+\left(B-\frac{\chi_1^2\chi_2^2}{4A}\right)\int_\Omega |\nabla z|^2\\
&=\int_\Omega\left(\sqrt{B}\nabla z-\frac{\chi_1\chi_2}{2\sqrt{B}} \nabla v\right)^2+\left(A-\frac{\chi_1^2\chi_2^2}{4B}\right)\int_\Omega |\nabla v|^2.
\end{split}
\ee
By the 2D G-N inequality in \eqref{G-N-I},  for any $\eta>0$, there exists $C_\eta>0$ such that
$$
\|\phi\|_{L^2}^2\leq \eta\|\nabla \phi\|_{L^2}^2+C_\eta\|\phi\|_{L^1}^2, \  \  \ \  \forall \phi\in H^1.
$$
Combining this with \eqref{F-fin-sub}, \eqref{AB-def} and the decreasing monotonicity of $\mathcal{F}$ implied by \eqref{Lyapunov-f} with  $(\tau_1, \tau_2)=(0,0)$, we infer that there exists a constant $E>0$ such that
\be\label{vz-grd-est}
 \|v\|_{H^1}^2+\|z\|_{H^1}^2\leq E \mathcal{F}(t)+E\leq E\mathcal{F}(0)+E,
\ee
which,  along with   \eqref{V-sim-def}, \eqref{ids-int}  and the fact that $s\ln s\geq -e^{-1}$ for $s>0$,  further enables us to deduce that
\be\label{uw-log-est}
\begin{split}
&\chi_2\int_\Omega |u\ln u|+\chi_1\int_\Omega |w\ln w|\\
&=\chi_2\left(\int_\Omega u\ln u -2\int_{\{u\leq 1\}} u\ln u\right)+\chi_1\left(\int_\Omega w\ln w -2\int_{\{w\leq 1\}} w\ln w\right)\\
&\leq \chi_2\int_\Omega u\ln u+\chi_1\int_\Omega w\ln w+2(\chi_1+\chi_2)e^{-1}|\Omega|\\
&=\mathcal{F}(t)+\chi_1\chi_2\int_\Omega \left(vz+\nabla v\cdot\nabla z\right)+\frac{\chi_1\chi_3}{2}\|v\|_{H^1}^2+2(\chi_1+\chi_2)e^{-1}|\Omega|\\
&\leq \mathcal{F}(0)+(\chi_2+|\chi_3|)\chi_1\left(\|v\|_{H^1}^2+\|z\|_{H^1}^2\right)+2(\chi_1+\chi_2)e^{-1}|\Omega|. \end{split}
\ee
 Consequently, our desired estimate \eqref{u+w-log-bdd} follows readily from \eqref{vz-grd-est} and \eqref{uw-log-est}.
\end{proof}
\begin{remark} By simpler arguments, when $\chi_1\chi_2=0$ and $m_2\chi_3<\pi^*$, no matter whether $(\tau_1,\tau_2)=(0,0)$ or not, one  can easily show that \eqref{u+w-log-bdd} is still valid.
\end{remark}
In the fully parabolic case, we shall derive an analog of Lemma \ref{u+w-log-bdd-lem}  under an implicit smallness condition on the product of  masses of $u$ and $w$.

\begin{lemma}\label{fully PP}
In the fully parabolic case, i.e., $\tau_1>0, \tau_2>0$, assume that
 \begin{equation}\label{small mass}
  4m_1m_2\chi_1\chi_2C_{GN}^8<\sqrt{1-4m_2\chi_3C_{GN}^4},
\end{equation}
 where the Gagilardo-Nirenberg  constant $C_{GN}=C_{GN} (\Omega)>0$ is determined by \eqref{GN-inequ}. Then there exists  $C=C(u_0, v_0,  w_0, z_0, \tau_i, \chi_i, \Omega)>0$ such that
\be\label{u+w-log-bdd-fp}
\|(u\ln u)(t)\|_{L^1}+\|(w\ln w)(t)\|_{L^1}+\|v(t)\|_{H^1}+\|z(t)\|_{H^1}\leq C, \ \ \ \forall t\in(0, T_m).
\ee
\end{lemma}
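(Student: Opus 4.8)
The plan is to turn the Lyapunov-type structure of Lemma~\ref{Lyapunov-f-lemma} into a genuine a priori bound by correcting $\mathcal F$ with quadratic quantities in $v,z$, and then to run a Gr\"onwall argument powered by the entropy dissipation. Because the identity \eqref{Lyapunov-f} now carries the sign-indefinite term $-(\tau_1+\tau_2)\chi_1\chi_2\int_\Omega v_tz_t$ (and, when $\chi_3<0$, the positive term $-\tau_1\chi_1\chi_3\int_\Omega v_t^2$), I would work with a combined functional $\mathcal G:=\mathcal F+\kappa_1(\|\nabla v\|_{L^2}^2+\|v\|_{L^2}^2)+\kappa_2(\|\nabla z\|_{L^2}^2+\|z\|_{L^2}^2)$. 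Testing the $v$- and $z$-equations of \eqref{Loop equations} by $v_t$ resp.\ $z_t$ (as in the derivation of \eqref{mixed-id2}) yields, after a Young step, dissipation $\tau_1\|v_t\|_{L^2}^2$ and $\tau_2\|z_t\|_{L^2}^2$ at the expense of $\tfrac1{\tau_1}\|w\|_{L^2}^2$ and $\tfrac1{\tau_2}\|u\|_{L^2}^2$; choosing $\kappa_1,\kappa_2$ large enough in terms of $\chi_i,\tau_i$ only, this $v_t,z_t$-dissipation overwhelms $|\!\int_\Omega v_tz_t|$ and $\int_\Omega v_t^2$, leaving one to bound, in $\mathcal G'(t)$, the nonnegative entropy dissipations of \eqref{Lyapunov-f} against controlled multiples of $\|u\|_{L^2}^2$ and $\|w\|_{L^2}^2$.

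The core step is to unfold the entropy dissipations, e.g.
\[
\int_\Omega u\,|\nabla(\ln u-\chi_1v)|^2=4\|\nabla\sqrt u\|_{L^2}^2+2\chi_1\!\int_\Omega u\,\Delta v+\chi_1^2\!\int_\Omega u\,|\nabla v|^2,
\]
together with the $w$-analogue (whose integration by parts produces the cross terms $2\chi_2\int_\Omega w\,\Delta z+2\chi_3\int_\Omega w\,\Delta v$), so as to expose the genuine entropy dissipation $\|\nabla\sqrt u\|_{L^2}^2$, $\|\nabla\sqrt w\|_{L^2}^2$. The quadratic-in-gradient remainders $\int_\Omega u|\nabla v|^2$, $\int_\Omega w|\nabla(\chi_2z+\chi_3v)|^2$ have the favorable sign; in the remaining cross terms one substitutes $-\Delta v=w-v-\tau_1v_t$ and $-\Delta z=u-z-\tau_2z_t$, which \emph{(i)} isolates the loop-coupling contribution $\asymp\chi_1\chi_2\int_\Omega uw$, \emph{(ii)} keeps the repulsion self-term $\asymp\chi_1\chi_3\int_\Omega w^2$ with its sign, and \emph{(iii)} throws the induced $v_t,z_t$-remainders back onto the $\|v_t\|_{L^2}^2$, $\|z_t\|_{L^2}^2$ budget built in the first step. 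Now the 2D Gagliardo--Nirenberg inequality \eqref{GN-inequ0}, applied to $\sqrt u$ and $\sqrt w$ with $\|\sqrt u\|_{L^2}^2=m_1$, $\|\sqrt w\|_{L^2}^2=m_2$, converts $L^2$-norms back into entropy dissipation,
\[
\|u\|_{L^2}^2\le8C_{GN}^4m_1\|\nabla\sqrt u\|_{L^2}^2+8C_{GN}^4m_1^2,\qquad
\|w\|_{L^2}^2\le8C_{GN}^4m_2\|\nabla\sqrt w\|_{L^2}^2+8C_{GN}^4m_2^2,
\]
and likewise $\int_\Omega uw\le\|u\|_{L^2}\|w\|_{L^2}$. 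Tracking every constant: the term $\chi_1\chi_3\int_\Omega w^2$ claims a fraction proportional to $m_2\chi_3C_{GN}^4$ of the $w$-dissipation, leaving a residual proportional to $1-4m_2\chi_3C_{GN}^4$ — the quantity under the root in \eqref{small mass} — while $\chi_1\chi_2\int_\Omega uw$, estimated by Young with a free parameter $\lambda$, eats a $\lambda$-share of the $u$-dissipation and a $\lambda^{-1}$-share of the \emph{residual} $w$-dissipation. A valid $\lambda$ exists precisely when the corresponding $2\times2$ quadratic form in $(\|\nabla\sqrt u\|_{L^2},\|\nabla\sqrt w\|_{L^2})$ is positive definite, whose discriminant condition is exactly \eqref{small mass} (with $m_1m_2$ entering through the geometric mean of the two Young coefficients, the $u$-side cost being routed via the $v$-equation and hence through $m_2$, and symmetrically). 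Under \eqref{small mass} one thereby arrives at $\mathcal G'(t)+c\bigl(\|\nabla\sqrt u\|_{L^2}^2+\|\nabla\sqrt w\|_{L^2}^2\bigr)\le C_1$ for some $c,C_1>0$.

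To conclude, \eqref{GN-inequ0} once more — via $\int_\Omega u\ln u\le\int_{\{u\ge1\}}u^2\le\|u\|_{L^2}^2$ — gives $\|\nabla\sqrt u\|_{L^2}^2\ge\frac1{8C_{GN}^4m_1}\int_\Omega u\ln u-C$ and similarly for $w$; combining this with the fact that $\mathcal G$ differs from $\chi_2\int_\Omega u\ln u+\chi_1\int_\Omega w\ln w+\kappa_1\|v\|_{H^1}^2+\kappa_2\|z\|_{H^1}^2$ only by terms that are either favorably signed or, by Lemma~\ref{reciprocal-lem}, already bounded, the last estimate upgrades to $\mathcal G'(t)+c'\mathcal G(t)\le C_2$. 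Since $s\ln s\ge-e^{-1}$ keeps $\mathcal G$ bounded below, Gr\"onwall's lemma yields $\sup_{(0,T_m)}\mathcal G<\infty$, which is precisely \eqref{u+w-log-bdd-fp}. I expect the hard part to be exactly the bookkeeping of the core step: the two species' entropy dissipations feed one another through $v$ and $z$, the loop term $\int_\Omega uw$ and the repulsive self-term $\chi_3\int_\Omega w^2$ compete for the single $w$-dissipation budget, and the parabolic remainders $\int_\Omega v_tz_t$, $\int_\Omega v_t^2$ (together with those released when $\Delta v$, $\Delta z$ are substituted) must be financed by $v_t,z_t$-dissipation that is itself borrowed back from $\|\nabla\sqrt u\|_{L^2}^2$, $\|\nabla\sqrt w\|_{L^2}^2$ via the $v$- and $z$-equations — making all these constants mutually compatible inside a single differential inequality is what fixes the precise shape of the smallness condition \eqref{small mass}.
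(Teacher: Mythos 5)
Your route is genuinely different from the paper's (you try to salvage the Lyapunov identity of Lemma~\ref{Lyapunov-f-lemma} by adding quadratic corrections in $v,z$, whereas the paper never uses $\mathcal F$ in the fully parabolic case), and it has a gap precisely at the point you flag as the hard part. The only mechanism in your scheme that produces $\|v_t\|_{L^2}^2$-dissipation is differentiating $\kappa_1\bigl(\|\nabla v\|_{L^2}^2+\|v\|_{L^2}^2\bigr)$, i.e.\ testing the $v$-equation by $v_t$, which gives $-\kappa_1\tau_1\|v_t\|_{L^2}^2+\kappa_1\int_\Omega w v_t$; any Young split of the last integral leaves a cost of order $\kappa_1\tau_1^{-1}\|w\|_{L^2}^2$. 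To dominate $\tfrac{(\tau_1+\tau_2)\chi_1\chi_2}{2}\|v_t\|_{L^2}^2$ coming from $\bigl|(\tau_1+\tau_2)\chi_1\chi_2\int_\Omega v_tz_t\bigr|$ you must take $\kappa_1\gtrsim(\tau_1+\tau_2)\chi_1\chi_2/\tau_1$, so after the Gagliardo--Nirenberg conversion $\|w\|_{L^2}^2\le 8C_{GN}^4m_2\|\nabla\sqrt w\|_{L^2}^2+\dots$ the bill presented to the $w$-entropy dissipation carries a factor $\sim(\tau_1+\tau_2)\chi_1\chi_2 m_2C_{GN}^4/\tau_1^2$. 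The available budget $4\chi_1\|\nabla\sqrt w\|_{L^2}^2$ from \eqref{Lyapunov-f} is fixed, so this cannot be absorbed uniformly in $\tau_1,\tau_2$ (it degenerates as $\tau_1\to0$ or $\tau_2/\tau_1\to\infty$): whatever smallness condition closes your inequality must involve $\tau_1,\tau_2$ and cannot reduce to the $\tau$-free condition \eqref{small mass}. Moreover, your assertion that the resulting discriminant is ``exactly'' \eqref{small mass} is not computed; since the two entropies inside $\mathcal F$ carry the locked weights $\chi_2$ and $\chi_1$, a direct accounting of the bilinear loop term $4\chi_1\chi_2\int_\Omega uw$ against the budgets $4\chi_2$ and $4\chi_1(1-4m_2\chi_3C_{GN}^4)$ yields the compatibility condition $16m_1m_2\chi_1\chi_2C_{GN}^8<1-4m_2\chi_3C_{GN}^4$, which is strictly stronger than \eqref{small mass}; the square root in \eqref{small mass} comes from a quadratic (Cauchy--Schwarz against $\|\Delta v\|_{L^2}^2$) absorption that your bilinear $\int uw$ bookkeeping does not reproduce.

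For contrast: the paper tests the four equations by $\ln u$, $\ln w$, $-\Delta v$, $-\Delta z$ (see \eqref{id-ulnu+wlnw}), so the $v$- and $z$-equations contribute dissipation $b\|\Delta v\|_{L^2}^2$, $c\|\Delta z\|_{L^2}^2$ with $\tau$-independent coefficients, and the only coupling is $-\int_\Omega(\chi_1u+a\chi_3w+bw)\Delta v-\int_\Omega(a\chi_2w+cu)\Delta z$, absorbed by Cauchy--Schwarz at quadratic cost $\tfrac1{4b}$, $\tfrac1{4c}$ before applying \eqref{GN-inequ}; no $v_t$ or $z_t$ ever appears, and optimizing over the free weights $a,b,c$ in \eqref{AB-choices-bdd} is what produces exactly \eqref{small mass}. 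A secondary gap in your conclusion: the $\|v\|_{H^1}^2$ portion of your $\mathcal G$ is not matched by any dissipation your testing generates (testing by $v_t$ produces none), and Lemma~\ref{reciprocal-lem} with $p=1$ only gives $W^{1,q}$ for $q<2$, not $H^1$; the paper obtains the $H^1$ control of $v,z$ from the $+b\int_\Omega|\nabla v|^2$ term in the $-\Delta v$ identity. Replacing your $v_t$-testing by $-\Delta v$-testing repairs both issues --- but at that point you have reconstructed the paper's argument.
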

\begin{proof}
Multiplying the first and the third equation  in \eqref{Loop equations} by $\ln u$ and $\ln w$ ,  and then,  multiplying the second and the fourth equation by $-\Delta v$ and $-\Delta z$, respectively, and finally  integrating over $\Omega$ by parts, we compute, for $t\in(0, T_m)$, that
\be\label{id-ulnu+wlnw}
\begin{cases}
&\frac{d}{dt}\int_{\Omega}u\ln u+4\int_\Omega|\nabla u^{\frac{1}{2}}|^2=-\chi_1\int_{\Omega}u\Delta v, \\[0.25cm]
&\frac{d}{dt}\int_\Omega w\ln w+4\int_\Omega|\nabla w^{\frac{1}{2}}|^2=-\chi_2\int_\Omega w\Delta z-\chi_3\int_\Omega w\Delta v, \\[0.25cm]
&\frac{\tau_1}{2}\frac{d}{dt}\int_\Omega|\nabla v|^2+\int_\Omega|\Delta v|^2+\int_{\Omega}|\nabla v|^2=-\int_\Omega w\Delta v, \\[0.25cm]
&\frac{\tau_2}{2}\frac{d}{dt}\int_\Omega|\nabla z|^2+\int_\Omega|\Delta z|^2+\int_{\Omega}|\nabla z|^2=-\int_\Omega u\Delta z.
\end{cases}
\ee
Given any positive constants $a,b$ and $c$, to be specified below as in \eqref{abc-choice}, through an elementary linear combination of \eqref{id-ulnu+wlnw}, we arrive at
\be\label{3.12}
\begin{split}
&\frac{d}{dt}\int_\Omega\left(u\ln u+aw\ln w+\frac{b\tau_1}{2}|\nabla v|^2+\frac{c\tau_2}{2}|\nabla z|^2\right)+4\int_\Omega|\nabla u^{\frac{1}{2}}|^2\\
&+4a\int_\Omega|\nabla w^{\frac{1}{2}}|^2+b\int_\Omega|\Delta v|^2+b\int_\Omega|\nabla v|^2+c\int_\Omega|\Delta z|^2+c\int_\Omega|\nabla z|^2\\
&=-\int_\Omega \left(\chi_1u+a\chi_3w+bw\right)\Delta v-\int_\Omega \left(a\chi_2w+cu\right)\Delta z.
\end{split}
\ee
Using basic Cauchy-Schwarz inequality, we estimate the right-hand side as
\be\label{3.13}
\begin{split}
&-\int_\Omega \left(\chi_1u+a\chi_3w+bw\right)\Delta v-\int_\Omega \left(a\chi_2w+cu\right)\Delta z\\
&\leq b\int_\Omega |\Delta v|^2+\frac{1}{4b}\int_\Omega\left(\chi_1u+a\chi_3w+bw\right)^2\\
&\ \ \ +c\int_\Omega|\Delta z|^2+\frac{1}{4c}\int_\Omega \left(a\chi_2w+cu\right)^2\\
&\leq b\int_\Omega |\Delta v|^2+\frac{1}{2b}\int_\Omega\left[\chi^2_1u^2+(a\chi_3+b)^2w^2\right]\\
&\ \ +c\int_\Omega|\Delta z|^2+\frac{1}{2c}\int_\Omega\left(a^2\chi^2_2w^2+c^2u^2\right)\\
&=b\int_\Omega |\Delta v|^2+c\int_\Omega|\Delta z|^2 +\left(\frac{\chi^2_1}{2b}+\frac{c}{2}\right)\int_\Omega u^2\\
 &\ \ \  +\left(\frac{(a\chi_3+b)^2}{2b}+\frac{a^2\chi^2_2}{2c}\right)\int_\Omega w^2, \quad \ \ \forall t\in(0, T_m).
\end{split}
\ee
By the  2D Gaglarido-Nirenberg  interpolation inequality in Lemma \ref{G-N} and the elementary  fact that  $(X+Y)^4\leq 2^3(X^4+Y^4)$ for all $ X,Y\geq 0$, we infer there exists a constant $C_{GN}=C_{GN} (\Omega)>0$ such that
\be\label{GN-inequ}
\begin{split}
\int_\Omega \phi^2=\|\phi^\frac{1}{2}\|_{L^4}^4&\leq C_{GN}^4 \left(\|\nabla \phi^\frac{1}{2}\|_{L^2}^\frac{1}{2}\|\phi^\frac{1}{2}\|_{L^2}^\frac{1}{2}
+\|\phi^\frac{1}{2}\|_{L^2}\right)^4\\
&\leq 8C_{GN}^4 \|\phi\|_{L^1}\|\nabla \phi^\frac{1}{2}\|_{L^2}^2+  8C_{GN}^4\|\phi\|_{L^1}^2, \ \ \  \forall \phi^\frac{1}{2}\in W^{1,2}.
\end{split}
\ee
Recalling that $\|u\|_{L^1}=\|u_0\|_{L^1}=m_1$ and $\|w\|_{L^1}=\|w_0\|_{L^1}=m_2$ by \eqref{uvwz-l1}, we employ \eqref{GN-inequ} twice to finally estimate \eqref{3.13} as
\be\label{3.14}
\begin{split}
&-\int_\Omega \left(\chi_1u+a\chi_3w+bw\right)\Delta v-\int_\Omega \left(a\chi_2w+cu\right)\Delta z\\
&\leq b\int_\Omega |\Delta v|^2+c\int_\Omega|\Delta z|^2+4m_1\left(\frac{\chi^2_1}{b}+c\right)C_{GN}^4\int_\Omega |\nabla u^\frac{1}{2}|^2\\
&\ \ +4m_2\left(\frac{(a\chi_3+b)^2}{b}+\frac{a^2\chi^2_2}{c}\right)C_{GN}^4\int_\Omega |\nabla w^\frac{1}{2}|^2+C_1, \ \ \forall t\in(0, T_m).
\end{split}
\ee
where $C_1$ is a finite number given by
$$
C_1=4m_1^2\left(\frac{\chi^2_1}{b}+c\right)C_{GN}^4
+4m_2^2\left(\frac{(a\chi_3+b)^2}{b}+\frac{a^2\chi^2_2}{c}\right)C_{GN}^4.
$$
Finally, substituting \eqref{3.14} into \eqref{3.12}, we end up with a key ODI as follows:
\be\label{u+w-key odi}
\begin{split}
&\frac{d}{dt}\int_\Omega\left(u\ln u+aw\ln w+\frac{b\tau_1}{2}|\nabla v|^2+\frac{c\tau_2}{2}|\nabla z|^2\right)\\
&\ \ +4A\int_\Omega|\nabla u^{\frac{1}{2}}|^2+4B\int_\Omega|\nabla w^{\frac{1}{2}}|^2+b\int_\Omega|\nabla v|^2+c\int_\Omega|\nabla z|^2\\
&\leq C_1, \quad \ \ \forall t\in(0, T_m).
\end{split}
\ee
where the constants $A$ and $B$ are given by
\be\label{AB-choices-bdd}
\begin{cases}
&A=1-m_1\left(\frac{\chi^2_1}{b}+c\right)C_{GN}^4:=p^{-1}\left(p-\frac{\chi^2_1}{b}-c\right), \\[0.2cm]
&B=a-m_2\left(\frac{(a\chi_3+b)^2}{b}+\frac{a^2\chi^2_2}{c}\right)C_{GN}^4:=q^{-1}\left(aq
-\frac{(a\chi_3+b)^2}{b}
-\frac{a^2\chi^2_2}{c}\right).
\end{cases}
\ee
To gain something out of \eqref{u+w-key odi}, we wish that both $A$ and $B$ be positive, which is possible only when $ 16\chi_1^2\chi_2^2<p^2q(q-4\chi_3)$, equivalent to our assumption \eqref{small mass}. In such case, we can specify, for instance,  positive constants  $a,b$ and $c$ as
\be\label{abc-choice}
\begin{cases}
&b=\frac{pq(q-4\chi_3)}{8\chi_2^2}>0,\\[0.2cm]
&a=\frac{(bp-\chi_1^2)(q-2\chi_3)b}{2\left[b^2\chi_2^2+(bp-\chi_1^2)\chi_3^2\right]}>0,\ \ \ \ \  c=\frac{(bp-\chi_1^2)}{2b}+\frac{a^2b\chi_2^2}{2\left[a b q-(a\chi_3+b)^2\right]}>0
\end{cases}
\ee
so that $A$ and $B$ defined in \eqref{AB-choices-bdd} are positive.  Next, notice, for any $\epsilon>0$, one has that $s \ln s\leq \epsilon s^2+C_\epsilon$ with finite $C_\epsilon=\sup\{ s\ln s-\epsilon s^2: s>0\}$. Therefore, one can readily deduce from \eqref{GN-inequ}, for some $C_2, C_3>0$,  that
$$
\int_\Omega u\ln u\leq A\int_\Omega |\nabla u^\frac{1}{2}|^2+C_2, \ \ \ \ a\int_\Omega w\ln w\leq B\int_\Omega |\nabla w^\frac{1}{2}|^2+C_3.
$$
Combining this with \eqref{u+w-key odi}, we finally find a positive $C_4>0$ such that
\begin{equation*}
\begin{split}
&\frac{d}{dt}\int_\Omega\left(u\ln u+aw\ln w+\frac{b\tau_1}{2}|\nabla v|^2+\frac{c\tau_2}{2}|\nabla z|^2\right)\\
&\ \ +\min\left\{1, \ \frac{2}{\tau_1}, \ \frac{2}{\tau_2}\right\}\int_\Omega\left(u\ln u+aw\ln w+\frac{b\tau_1}{2}|\nabla v|^2+\frac{c\tau_2}{2}|\nabla z|^2\right)\\
&\ \ \leq C_4,  \quad \ \ \forall t\in(0, T_m).
\end{split}
\end{equation*}
Solving this simple  differential inequality and using the simple trick used in \eqref{uw-log-est}, we find a  positive $C_5>0$ such that
$$
\|u\ln u\|_{L^1}+\|w\ln w\|_{L^1}+\|\nabla v\|_{L^2}+\|\nabla z\|_{L^2}\leq C_5, \quad \ \ \forall t\in(0, T_m),
$$
which along with  \eqref{vz-starting bdd} with $q=1$ yields our desired estimate \eqref{u+w-log-bdd-fp}.
\end{proof}
Armed with the key uniform boundedness of $(u\ln u, w\ln w)$ as obtained in Lemmas \ref{u+w-log-bdd-lem} and \ref{fully PP},  it is quite standard for us to show higher  $L^p$-boundedness ($p>1$) and, eventually,  $L^\infty$-boundedness as in \eqref{bdd-thm-fin} in 2D setting, see similar situations in \cite{A4, Xiang15-JDE, Xiang18-NA, Xiang18-JMP}. We here would like to supply an argument for  \eqref{Loop equations} for the sake of completeness and for clarity of deriving higher order gradients in Subsection 3.3.

\begin{lemma}\label{uw-l2-bdd-lem} When  $\tau_1= \tau_2=0$, assume that  \eqref{key-cond} holds. Then there exists a constant $C=C(u_0, w_0, \Omega)>0$ such that
\be\label{u+w-l2-bdd}
\|u(t)\|_{L^2}+\|w(t)\|_{L^2}+\|v(t)\|_{H^2}+\|z(t)\|_{H^2}\leq C,\ \  \  \forall t\in(0, T_m);
\ee
and, for any $q\in(1, \infty)$,    there exists $C_q=C(q,u_0, w_0, \Omega)>0$ such that
\be\label{v+z-gradlq-bdd}
\|v(t)\|_{W^{1,q}}+\|z(t)\|_{W^{1,q}}\leq C_q, \ \ \   \forall t\in(0, T_m).
\ee
\end{lemma}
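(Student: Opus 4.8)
The plan is to upgrade the $L\log L$-bound of Lemma~\ref{u+w-log-bdd-lem} to a uniform $L^2$-bound for $u$ and $w$ via a coupled energy estimate, and then read off the regularity of $v,z$ from elliptic theory. First I would test the first equation of \eqref{Loop equations} by $u$ and the third by $w$, integrate by parts over $\Omega$, and exploit that $\tau_1=\tau_2=0$ forces $\Delta v=v-w$ and $\Delta z=z-u$. Substituting these and discarding the favorably signed terms (which are all nonnegative because $u,v,w,z\ge 0$) produces the coupled pair
\begin{align*}
\tfrac12\tfrac{d}{dt}\|u\|_{L^2}^2+\|\nabla u\|_{L^2}^2&\le\tfrac{\chi_1}{2}\int_\Omega u^2w,\\
\tfrac12\tfrac{d}{dt}\|w\|_{L^2}^2+\|\nabla w\|_{L^2}^2&\le\tfrac{\chi_2}{2}\int_\Omega uw^2+\tfrac{\chi_3^+}{2}\int_\Omega w^3+\tfrac{\chi_3^-}{2}\int_\Omega vw^2,\qquad \chi_3^\pm:=\max\{\pm\chi_3,0\}.
\end{align*}

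The heart of the matter is absorbing the cubic terms on the right. The term $\int_\Omega vw^2$ is harmless: $\|v\|_{L^q}$ is uniformly bounded for every $q<\infty$ (by the $H^1$-bound of Lemma~\ref{u+w-log-bdd-lem} and $H^1\hookrightarrow L^q$ in $2$D), so by H\"older and the $2$D Gagliardo--Nirenberg inequality $\|w\|_{L^3}^3\le C\|\nabla w\|_{L^2}^2\|w\|_{L^1}+C\|w\|_{L^1}^3$ (a case of Lemma~\ref{G-N}) together with mass conservation $\|w\|_{L^1}=m_2$, one gets $\int_\Omega vw^2\le\varepsilon\|\nabla w\|_{L^2}^2+C_\varepsilon$. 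The genuinely coupled terms $\int_\Omega u^2w\le\|u\|_{L^3}^2\|w\|_{L^3}$, $\int_\Omega uw^2\le\|u\|_{L^3}\|w\|_{L^3}^2$, and (when $\chi_3>0$) $\int_\Omega w^3$ are the main point: applying the same Gagliardo--Nirenberg inequality to both $u$ and $w$, using $\|u\|_{L^1}=m_1$, $\|w\|_{L^1}=m_2$, and Young's inequality, one bounds all three by a linear combination of $\|\nabla u\|_{L^2}^2$ and $\|\nabla w\|_{L^2}^2$ whose prefactors are controlled by $m_1,m_2,\chi_1,\chi_2,\chi_3$ only. These prefactors can be made small enough to be absorbed into the dissipation precisely because \eqref{key-cond} forces both $m_2\chi_3<\pi^*$ and $m_1m_2\chi_1\chi_2$ to be suitably small; I expect this coupled absorption --- not any bound on the individual masses --- to be the main obstacle, and it is the only place where \eqref{key-cond} re-enters this lemma, the rest being unconditional once the $L\log L$-bound is available.

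Once the cubic terms are absorbed, adding the two differential inequalities with a suitable positive weight and using $\|u\|_{L^2}^2\le\varepsilon\|\nabla u\|_{L^2}^2+C_\varepsilon m_1^2$ and $\|w\|_{L^2}^2\le\varepsilon\|\nabla w\|_{L^2}^2+C_\varepsilon m_2^2$ (Gagliardo--Nirenberg plus mass conservation) yields $\frac{d}{dt}\big(\|u\|_{L^2}^2+\|w\|_{L^2}^2\big)+c\big(\|u\|_{L^2}^2+\|w\|_{L^2}^2\big)\le C$ for some $c,C>0$, and Gr\"onwall's lemma gives the uniform $L^2$-bounds for $u$ and $w$ on $(0,T_m)$. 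Finally, with $\|u\|_{L^2}$ and $\|w\|_{L^2}$ in hand, the elliptic $H^2$-estimate for the Neumann problems $\Delta v=v-w$ and $\Delta z=z-u$ gives $\|v\|_{H^2}+\|z\|_{H^2}\le C$, which completes \eqref{u+w-l2-bdd}; and since $W^{2,2}(\Omega)\hookrightarrow W^{1,q}(\Omega)$ for every $q<\infty$ in two dimensions (equivalently, by $W^{2,q}$-elliptic regularity applied with the $L^q$-bound of $u,w$ that this embedding supplies), \eqref{v+z-gradlq-bdd} follows.
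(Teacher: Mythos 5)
Your overall architecture (test by $u$ and $w$, use $-\Delta v=w-v$ and $-\Delta z=u-z$, absorb the resulting cubic terms into the dissipation, then Gr\"onwall plus elliptic $H^2$-theory) is the same as the paper's, and your treatment of $\int_\Omega vw^2$ and of the sign of $\chi_3$ is fine. But the central absorption step has a genuine gap. You propose to control $\chi_1\int_\Omega u^2w$, $\chi_2\int_\Omega uw^2$ and $\chi_3^+\int_\Omega w^3$ by the plain Gagliardo--Nirenberg inequality $\|w\|_{L^3}^3\le C\|\nabla w\|_{L^2}^2\|w\|_{L^1}+C\|w\|_{L^1}^3$ together with mass conservation, and you claim the resulting prefactors of $\|\nabla u\|_{L^2}^2,\|\nabla w\|_{L^2}^2$ can be made $<1$ ``precisely because \eqref{key-cond} forces $m_2\chi_3<\pi^*$ and $m_1m_2\chi_1\chi_2$ to be suitably small.'' This does not follow: those prefactors carry the (non-explicit) Gagliardo--Nirenberg constant, e.g.\ the $\chi_3^+\int_\Omega w^3$ term produces $\chi_3^+Cm_2\|\nabla w\|_{L^2}^2$, and \eqref{key-cond} only gives $m_2\chi_3<\pi^*$, not $Cm_2\chi_3<1$; likewise the coupled terms would require a smallness of $m_1m_2\chi_1\chi_2$ measured against powers of $C$, which is structurally the \emph{fully parabolic} hypothesis \eqref{small mass}, not \eqref{key-cond}. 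Indeed \eqref{key-cond} allows $m_1m_2\chi_1\chi_2$ up to $(\pi^*)^2$, which is not small, and Remark \ref{thm-rem} (P4) even records $C_{GN}^4\ge\frac{1}{2\pi^*}$, so the constants cannot conspire in your favor. If this direct absorption worked under \eqref{key-cond}, Lemma \ref{u+w-log-bdd-lem} and the Trudinger--Moser machinery would be superfluous --- a strong hint that it cannot.

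The missing idea is that \eqref{key-cond} should enter this lemma \emph{only} through the conclusion of Lemma \ref{u+w-log-bdd-lem}: once $\|(u\ln u)(t)\|_{L^1}+\|(w\ln w)(t)\|_{L^1}$ is uniformly bounded, one invokes the logarithmically refined 2D Gagliardo--Nirenberg inequality (\cite[Lemma A.5]{TW14-JDE}, used in the paper as \eqref{ul3-bdd by}), which gives, for \emph{every} $\eta>0$, a constant $C_\eta$ with
\begin{equation*}
\int_\Omega u^3\le\eta\int_\Omega|\nabla u|^2+C_\eta,\qquad \int_\Omega w^3\le\eta\int_\Omega|\nabla w|^2+C_\eta .
\end{equation*}
With $\eta$ at your disposal, all three cubic terms are absorbed unconditionally, regardless of the sizes of $m_i$ and $\chi_i$, and the rest of your argument (weighted ODI, Gr\"onwall, elliptic $H^2$-estimate, Lemma \ref{reciprocal-lem} with $p=2$ for \eqref{v+z-gradlq-bdd}) goes through exactly as you wrote it. So the fix is local: replace the plain interpolation inequality by its $L\log L$-improved version at the absorption step, and drop the claim that \eqref{key-cond} re-enters there.
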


\begin{proof}
Applying the elliptic estimate in \cite[Lemma 2.7]{JX18-DCDSB} to  the second and fourth equation  with $\tau_1=\tau_2=0$ in \eqref{Loop equations}, we see, for any $\epsilon>0$ and $p>1$, there  exists a positive constant $C_\epsilon>0$ such that
\be \label{vz-lp-by-uw}
 \int_\Omega\left( v^p, \ \  w^p\right)\leq \epsilon \int_\Omega \left(w^p,\ \ u^p\right)+C_\epsilon.
\ee
Using the  equations in \eqref{Loop equations}  with $\tau_1=\tau_2=0$, performing  integration by parts and using Young's inequality and \eqref{vz-lp-by-uw}, we compute that
 \be \label{uw-l2-diff}
 \begin{split}
&3\frac{d}{dt} \int_\Omega \left( u^2+w^2\right)+ 6\int_\Omega |\nabla u |^2+6\int_\Omega |\nabla w |^2\\
&=3\chi_1\int_\Omega u^2(w-v)+3\chi_2\int_\Omega w^2(u-z)+3\chi_3\int_\Omega w^2(w-v)\\
&\leq \left(4\chi_1+\chi_2\right)\int_\Omega u^3+\left(\chi_1+|\chi_3|\right)\int_\Omega v^3\\
&\  +\left(\chi_1+4\chi_2+5|\chi_3|\right)\int_\Omega w^3+\chi_2\int_\Omega z^3\\
&\leq \left(4\chi_1+4\chi_2+6|\chi_3|\right)\int_\Omega \left(u^3+w^3\right)+C_1, \ \ t\in (0, T_m).
\end{split}
\ee
Due to the uniform $L^1$-boundedness of $(u\ln u, w\ln w)$ in \eqref{u+w-log-bdd}, the 2D G-N inequality involving logarithmic functions  from  \cite[Lemma A.5]{TW14-JDE} implies, for any $\eta>0$, there exists $C_\eta>0$ such that
\be\label{ul3-bdd by}
\int_\Omega \left(u^3, \ \ w^3\right)\leq \eta \int_\Omega \left( |\nabla u |^2, \ \ |\nabla w|^2\right)+C_\eta.
\ee
Of course, the above inequality or the usual 2D G-N inequality simply shows, for any $\sigma>0$, there exists $C_\sigma>0$ such that
\be\label{ul2-bdd by}
\int_\Omega \left(u^2, \ \ w^2\right)\leq \sigma \int_\Omega \left( |\nabla u |^2, \ \ |\nabla w|^2\right)+C_\sigma.
\ee
Based on \eqref{ul2-bdd by}, \eqref{ul3-bdd by} and \eqref{uw-l2-diff}, one can readily derive an ODI of  the form
$$
\frac{d}{dt} \int_\Omega \left( u^2+w^2\right)+\int_\Omega \left( u^2+w^2\right)\leq C(u_0, w_0, \Omega), \quad \quad \forall t\in(0, T_m),
$$
yielding directly the uniform boundedness of $\|u\|_{L^2}+\|w\|_{L^2}$ and the $H^2$-boundedness of $(v, z)$ by the $H^2$-elliptic estimate (cf. \cite{Ladyzenskaja710})  in \eqref{u+w-l2-bdd}. Finally, the $W^{1,q}$-estimate of $(v,z)$ in \eqref{v+z-gradlq-bdd} follows from Lemma \ref{reciprocal-lem} with $p=2$.
\end{proof}
\begin{lemma}\label{uw-l2-bdd-lem-pp} In the fully parabolic case, i.e., $\tau_1>0, \tau_2>0$, assume that \eqref{small mass} holds. Then there exists a constant $C=C(u_0, w_0, \Omega)>0$ such that
\be\label{u+w-l2-bdd-pp}
\|u(t)\|_{L^2}+\|w(t)\|_{L^2}+\|v(t)\|_{H^2}+\|z(t)\|_{H^2}\leq C, \  t\in(\min\{1, \frac{T_m}{2}\}, T_m);
\ee
and, for any $q\in(1, \infty)$,    there exists $C_q=C(q,u_0, w_0, \Omega)>0$ such that
\be\label{v+z-gradlq-bdd-pp}
\|v(t)\|_{W^{1,q}}+\|z(t)\|_{W^{1,q}}\leq C_q, \ \ \ \forall t\in(\min\{1, \frac{T_m}{2}\}, T_m).
\ee
\end{lemma}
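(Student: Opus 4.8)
\smallskip\noindent\textbf{Proof plan.}
The plan is to promote the $L\log L$- and $H^1$-information from Lemma~\ref{fully PP} to $L^2$-bounds for $u,w$ by a coupled energy estimate of the same flavour as the one driving Lemma~\ref{fully PP}, now built on the base energy $\int_\Omega(u^2+aw^2)$ instead of $\int_\Omega(u\ln u+aw\ln w)$. By Lemma~\ref{fully PP}, the smallness assumption \eqref{small mass} already gives
\[
\sup_{t\in(0,T_m)}\Big(\|(u\ln u)(t)\|_{L^1}+\|(w\ln w)(t)\|_{L^1}+\|v(t)\|_{H^1}+\|z(t)\|_{H^1}\Big)<\infty ,
\]
and Lemma~\ref{reciprocal-lem} with $p=1$ gives a uniform bound for $\|v(t)\|_{W^{1,q}}+\|z(t)\|_{W^{1,q}}$ on $(0,T_m)$ for every $q\in[1,2)$; crucially, the $L\log L$-bound makes the logarithmically-refined two-dimensional Gagliardo--Nirenberg inequalities \eqref{ul3-bdd by} and \eqref{ul2-bdd by} available for both $u$ and $w$. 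I would then test the first and third equations of \eqref{Loop equations} against $u$ and $w$, and the second and fourth against $-\Delta v$ and $-\Delta z$, and add the four identities with positive weights $1,a,b,c$ chosen as in \eqref{abc-choice}, so that precisely the numerical room created by \eqref{small mass} is exploited; this yields an identity of the schematic form
\be
\begin{split}
&\frac{d}{dt}\mathcal E(t)+\int_\Omega|\nabla u|^2+a\int_\Omega|\nabla w|^2+b\int_\Omega\big(|\Delta v|^2+|\nabla v|^2\big)+c\int_\Omega\big(|\Delta z|^2+|\nabla z|^2\big)\\
&\qquad=-\tfrac{\chi_1}{2}\int_\Omega u^2\Delta v-\tfrac{a\chi_3}{2}\int_\Omega w^2\Delta v-\tfrac{a\chi_2}{2}\int_\Omega w^2\Delta z-b\int_\Omega w\Delta v-c\int_\Omega u\Delta z ,
\end{split}
\ee
where $\mathcal E(t)=\int_\Omega\big(\tfrac12u^2+\tfrac a2w^2+\tfrac{b\tau_1}{2}|\nabla v|^2+\tfrac{c\tau_2}{2}|\nabla z|^2\big)$.

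The remaining work is to close this identity. The last two terms on the right are harmless ($-b\int w\Delta v\le\tfrac b4\|\Delta v\|_{L^2}^2+\tfrac{b}{2}\|w\|_{L^2}^2$, etc.), and for the chemotactic terms I would use Cauchy--Schwarz to absorb the $\|\Delta v\|_{L^2}^2$- and $\|\Delta z\|_{L^2}^2$-parts into the dissipation, leaving $\int_\Omega u^4$ and $\int_\Omega w^4$, which by \eqref{GN-inequ} with the conserved masses $\|u\|_{L^1}=m_1$, $\|w\|_{L^1}=m_2$ are controlled by $\|\nabla u\|_{L^2}^2$, $\|\nabla w\|_{L^2}^2$ with small coefficients (thanks to \eqref{small mass}) modulo lower-order factors which \eqref{ul2-bdd by}/\eqref{ul3-bdd by} in turn dominate by a further small fraction of the dissipation plus a constant — this is exactly the mechanism that closed \eqref{3.14} in Lemma~\ref{fully PP} and the passage from \eqref{uw-l2-diff} in Lemma~\ref{uw-l2-bdd-lem}. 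Using additionally $s\ln s\le\varepsilon s^2+C_\varepsilon$ together with \eqref{GN-inequ} to bound $\mathcal E$ itself by a small multiple of the dissipation plus a constant, one arrives at $\mathcal E'(t)+\mathcal E(t)\le C$, whence Gronwall yields the uniform $L^2$-bound for $u,w$ and the $H^1$-bound for $\nabla v,\nabla z$; the $H^2$-bound on $v,z$ in \eqref{u+w-l2-bdd-pp} then follows by applying the $L^p$--$L^q$ Neumann heat-semigroup estimate (or $W^{2,2}$-maximal regularity) to the $v$- and $z$-equations now that $w,u\in L^2(\Omega)$, and \eqref{v+z-gradlq-bdd-pp} from Lemma~\ref{reciprocal-lem} with $p=2$.

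I expect the genuine difficulty to be precisely the coupling terms $\int_\Omega u^2\Delta v$ and $\int_\Omega w^2\Delta z$: unlike the elliptic Lemma~\ref{uw-l2-bdd-lem}, where $\Delta v=v-w$ turns them into purely cubic quantities handled by \eqref{ul3-bdd by} alone (so that the conclusion holds on all of $(0,T_m)$), here $\Delta v$ is an independent unknown whose $L^2$-control must be purchased from the $(-\Delta v)$-testing, and one must check carefully that the quartic expressions produced by \eqref{GN-inequ} do not swamp the merely quadratic dissipation — it is here, not just at the $L\log L$-level, that the smallness \eqref{small mass} is genuinely needed. (Should this direct energy route fail to close, one can instead run a Moser-type $L^p$-iteration seeded by the $L\log L$-bound and the Neumann-heat-semigroup smoothing, in the spirit of \cite{A4, A5}.) Finally, the restriction to $t\ge\min\{1,T_m/2\}$ — rather than to all of $(0,T_m)$ as in Lemma~\ref{uw-l2-bdd-lem} — is the usual artefact of invoking parabolic smoothing estimates, which carry $t^{-\alpha}$-type singularities near $t=0$, in place of the instantaneous elliptic regularity available when $\tau_1=\tau_2=0$.
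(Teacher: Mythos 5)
Your overall architecture (promote the $L\log L$/$H^1$ information from Lemma \ref{fully PP} to $L^2$ by a coupled energy estimate, then get \eqref{v+z-gradlq-bdd-pp} from Lemma \ref{reciprocal-lem} with $p=2$) is the right one, and you correctly locate the danger in the coupling terms $\int_\Omega u^2\Delta v$ and $\int_\Omega w^2\Delta z$. But your proposed resolution of exactly those terms does not close, for two intertwined reasons. First, splitting $\int_\Omega u^2|\Delta v|\le \frac{b}{4}\int_\Omega|\Delta v|^2+\frac{\chi_1^2}{b}\int_\Omega u^4$ leaves a \emph{quartic} term, and \eqref{GN-inequ} applied to $\phi=u^2$ gives $\int_\Omega u^4\le 8C_{GN}^4\|u\|_{L^2}^2\|\nabla u\|_{L^2}^2+8C_{GN}^4\|u\|_{L^2}^4$: the factor multiplying the dissipation is $\|u\|_{L^2}^2$ — precisely the quantity you are trying to bound — and \emph{not} the conserved mass $m_1$, so no smallness of $m_1m_2\chi_1\chi_2$ can make that coefficient small. (The $L^1$-coefficient version of \eqref{GN-inequ} is what works at the $L\log L$ level in Lemma \ref{fully PP}, where $\phi=u$; it does not survive the passage to $\phi=u^2$.) The logarithmic inequality \eqref{ul3-bdd by} rescues only \emph{cubic} powers of $u,w$, not quartic ones. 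Second, your choice to test the $v$- and $z$-equations by $-\Delta v$, $-\Delta z$ produces only the dissipation $\int_\Omega|\Delta v|^2$, which is too weak to permit the alternative Young splitting with exponents $(3,\tfrac32)$, namely $\int_\Omega u^2|\Delta v|\le\epsilon\int_\Omega|\Delta v|^3+C_\epsilon\int_\Omega u^3$, that would keep the $u$-power at the absorbable level $3$.

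The paper's proof fixes both points simultaneously: it tests the $v$- and $z$-equations at second order (deriving $\tau_1\frac{d}{dt}\int_\Omega|\Delta v|^2+2\int_\Omega|\Delta v|^2+2\int_\Omega|\nabla\Delta v|^2=-2\int_\Omega\nabla w\cdot\nabla\Delta v$, cf. \eqref{u+w-l2-diff}), adds the four identities with \emph{unit} weights (no $a,b,c$ are needed here), and splits the chemotactic terms by Young with exponents $(3,\tfrac32)$ as in \eqref{u+w-l2-com}. The resulting $\int_\Omega u^3,\int_\Omega w^3$ are absorbed with arbitrarily small coefficients via \eqref{ul3-bdd by} (using the $L\log L$ bound), and $\int_\Omega|\Delta v|^3,\int_\Omega|\Delta z|^3$ are absorbed into the new dissipation $\int_\Omega|\nabla\Delta v|^2,\int_\Omega|\nabla\Delta z|^2$ via \eqref{delta vl3-bdd by} (using the $H^1$-bound on $v,z$ from Lemma \ref{fully PP}). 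In particular the smallness \eqref{small mass} is \emph{not} invoked again at the $L^2$ stage — it enters only through Lemma \ref{fully PP} — contrary to your expectation that it is ``genuinely needed'' here; and the $H^2$-bound on $v,z$ comes out of this same ODI (it is the $\tau_i\int_\Omega|\Delta\cdot|^2$ part of the energy) rather than from a posteriori semigroup estimates. Your explanation of the restriction to $t\ge\min\{1,\frac{T_m}{2}\}$ (waiting for parabolic regularization, since $\Delta v_0$ need not lie in $L^2$ under \eqref{initi data reg}) is essentially correct.
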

\begin{proof}
Using the equations  and homogeneous Neumann boundary conditions in the IBVP \eqref{Loop equations}, we find, upon integration by parts,  for $t\in(0, T_m)$,  that
\be\label{u+w-l2-diff}
\begin{cases}
\frac{d}{dt} \int_\Omega u^2+ 2\int_\Omega |\nabla u |^2 =-\chi_1  \int_\Omega  u^2 \Delta  v,\\[0.25cm]
\frac{d}{dt} \int_\Omega w^2+ 2\int_\Omega |\nabla w |^2 =-\chi_2  \int_\Omega  w^2 \Delta  z-\chi_3  \int_\Omega  w^2 \Delta  v,\\[0.25cm]
\tau_1\frac{d}{dt} \int_\Omega |\Delta v|^2+ 2\int_\Omega |\Delta v |^2+2\int_\Omega |\nabla \Delta v |^2=-2\int_\Omega \nabla w \nabla \Delta v, \\[0.25cm]
\tau_2\frac{d}{dt} \int_\Omega |\Delta z|^2+ 2\int_\Omega |\Delta z |^2+2\int_\Omega |\nabla \Delta z |^2=-2\int_\Omega \nabla u \nabla \Delta z.
\end{cases}
\ee
Adding those identities  in  \eqref{u+w-l2-diff} together, we obtain, for any $\epsilon>0$, that
\be\label{u+w-l2-com}
\begin{split}
&\frac{d}{dt}\int_\Omega\left(u^2+w^2+\tau_1 |\Delta v|^2+ \tau_2  |\Delta z|^2\right)+2\int_\Omega|\nabla u|^2+2\int_\Omega|\nabla w|^2\\
&\ \ +2\int_\Omega|\Delta v|^2+2\int_\Omega|\nabla \Delta v|^2+2\int_\Omega|\Delta z|^2+2\int_\Omega|\nabla \Delta z|^2\\
&=-\chi_1\int_\Omega u^2\Delta v-\chi_2 \int_\Omega w^2\Delta z
-\chi_3 \int_\Omega w^2\Delta v \\
&\quad \ \ -2\int_\Omega \nabla w \nabla \Delta v-2\int_\Omega \nabla u \nabla \Delta z\\
&\leq \left(\chi_1+|\chi_3|\right)\epsilon\int_\Omega|\Delta v|^3+\frac{2\chi_1}{3\sqrt{3\epsilon}}\int_\Omega u^3+\frac{2\left(\chi_2+|\chi_3|\right)}{3\sqrt{3\epsilon}}\int_\Omega w^3\\
&\ \ +\chi_2\epsilon \int_\Omega|\Delta z|^3 +\int_\Omega |\nabla w|^2+\int_\Omega |\nabla \Delta v|^2+\int_\Omega |\nabla u|^2+\int_\Omega |\nabla \Delta z|^2,
\end{split}
\ee
where we have applied the Young's inequality with epsilon a couple of times:
\be\label{Young}
ab\leq \epsilon a^p+\frac{b^q}{(\epsilon p)^{\frac{q}{p}}q},  \ \ p>0, q>0, \frac{1}{p}+\frac{1}{q}=1,    \quad \forall a,b\geq 0.
\ee
Then it is straightforward to see  from \eqref{u+w-l2-com}  that
\be\label{u+w-l2-com-odi}
\begin{split}
&\frac{d}{dt}\int_\Omega\left(u^2+w^2+\tau_1 |\Delta v|^2+ \tau_2  |\Delta z|^2\right)+\int_\Omega|\nabla u|^2+\int_\Omega|\nabla w|^2\\
&\ \ +2\int_\Omega|\Delta v|^2+\int_\Omega|\nabla \Delta v|^2+2\int_\Omega|\Delta z|^2+\int_\Omega|\nabla \Delta z|^2\\
&\leq \left(\chi_1+\chi_2+|\chi_3|\right)\epsilon\int_\Omega\left(|\Delta v|^3+|\Delta z|^3\right)\\
&\ \ \ +\frac{2\left(\chi_1+\chi_2+|\chi_3|\right)}{3\sqrt{3\epsilon}}\int_\Omega \left(u^3+w^3\right), \ \ \ \forall \epsilon>0.
\end{split}
\ee
Applying the 2D G-N interpolation inequality in Lemma \ref{G-N},  Sobolev interpolation inequality and  the boundedness of $\| v\|_{H^1}+\|z\|_{H^1}$ ensured  by \eqref{u+w-log-bdd-fp}, we infer (see details, for instance, in  \cite{Xiang15-JDE}), for some $C_1>0$,  that
 \be\label{delta vl3-bdd by}
\int_\Omega  \left(|\Delta v|^3, \ \ |\Delta z|^3\right)\leq C_1\int_\Omega \left(|\nabla \Delta v|^2, \ \ |\nabla \Delta z|^2\right)+C_1.
\ee
 Based on  \eqref{delta vl3-bdd by}, \eqref{ul3-bdd by} and \eqref{ul2-bdd by}, upon suitably choosing $\epsilon, \eta, \sigma$,  from  \eqref{u+w-l2-com-odi}, we  can easily deduce, for $ t\in(\min\{1, \frac{T_m}{2}\}, T_m)$, a final ODI of the form that
\begin{align*}
&\frac{d}{dt}\int_\Omega\left(u^2+w^2+\tau_1 |\Delta v|^2+ \tau_2  |\Delta z|^2\right)\\
&\  +\min\left\{1,\ \frac{2}{\tau_1},\ \frac{2}{\tau_2}\right\} \int_\Omega\left(u^2+w^2+\tau_1 |\Delta v|^2+ \tau_2  |\Delta z|^2\right)\leq C_2.
\end{align*}
This along  with the standard elliptic  $H^2$-estimate and Lemma \ref{reciprocal-lem} with $p=2$ yields  \eqref{u+w-l2-bdd-pp} and \eqref{v+z-gradlq-bdd-pp}, as wished.
\end{proof}

\subsection{From $L^2$ to $L^\infty$:}
In this subsection, we shall prove the global boundedness claimed  in \eqref{bdd-thm-fin} and thus global existence of solutions to \eqref{Loop equations}.

\begin{lemma} \label{linfty-bdd-lem} Under the conditions of Lemma \ref{u+w-log-bdd-lem} or \ref{fully PP}, the classical solution $(u,v,w,z)$ of \eqref{Loop equations} is global in time and is uniformly bounded according to \eqref{bdd-thm-fin}.
\end{lemma}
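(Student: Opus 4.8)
The plan is the by-now standard bootstrap from the $L^2$-information already secured in Lemmas \ref{uw-l2-bdd-lem} and \ref{uw-l2-bdd-lem-pp} up to $L^\infty$, after which global existence falls out of the extensibility criterion \eqref{u-w blow-up criterion}. Recall that those lemmas provide, uniformly in $t$ on $(0,T_m)$ when $\tau_1=\tau_2=0$, or on $(\min\{1,T_m/2\},T_m)$ when $\tau_1,\tau_2>0$ (boundedness on the initial slice $(0,\min\{1,T_m/2\})$ being immediate from the local theory of Lemma \ref{2.l1}), the bounds $\|u(t)\|_{L^2}+\|w(t)\|_{L^2}\le C$ together with $\|v(t)\|_{W^{1,q}}+\|z(t)\|_{W^{1,q}}\le C_q$ for every finite $q$. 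It therefore remains only to upgrade $u,w$ to $L^\infty$ with a $t$-independent constant; the $W^{1,\infty}$-bound for $v,z$ will then follow from elliptic (when $\tau_i=0$) or parabolic (when $\tau_i>0$) regularity applied to their equations with the now-bounded sources $w$ and $u$.

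First I would run a coupled $L^p$-testing argument valid for both parameter regimes: testing the $u$-equation with $u^{p-1}$ and the $w$-equation with $w^{p-1}$, integrating by parts, and estimating the chemotactic terms by H\"older's inequality using $\nabla v,\nabla z\in L^q$ for a fixed $q>2$ (Lemma \ref{reciprocal-lem}), one reaches a differential inequality of the form
\[
\frac1p\frac{d}{dt}\int_\Omega\left(u^p+w^p\right)+\frac{p-1}{p^2}\int_\Omega\left(|\nabla u^{p/2}|^2+|\nabla w^{p/2}|^2\right)\le C(p)\left(\|u^{p/2}\|_{L^{2q/(q-2)}}^2+\|w^{p/2}\|_{L^{2q/(q-2)}}^2\right)+C(p).
\]
Applying the 2D Gagliardo--Nirenberg inequality of Lemma \ref{G-N} to $u^{p/2}$ and $w^{p/2}$, interpolating $\|u^{p/2}\|_{L^{2q/(q-2)}}$ between $\|\nabla u^{p/2}\|_{L^2}$ and $\|u^{p/2}\|_{L^{2/p}}=\|u_0\|_{L^1}^{p/2}$ (which produces an exponent $\theta<1$), and absorbing the gradient part by Young's inequality, one obtains $\frac{d}{dt}\int_\Omega(u^p+w^p)+\int_\Omega(u^p+w^p)\le C(p)$, hence $\|u(t)\|_{L^p}+\|w(t)\|_{L^p}\le C(p)$ for every finite $p$, uniformly in $t$. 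Feeding this back into Lemma \ref{reciprocal-lem} with $p>2$ yields a time-uniform bound $\|v(t)\|_{W^{1,\infty}}+\|z(t)\|_{W^{1,\infty}}\le C$.

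With $\nabla v,\nabla z$ bounded in $L^\infty$, the $u$- and $w$-equations become scalar drift--diffusion equations $u_t=\Delta u-\chi_1\nabla\cdot(u\nabla v)$ and $w_t=\Delta w-\nabla\cdot\bigl(w(\chi_2\nabla z+\chi_3\nabla v)\bigr)$ with $L^\infty$ drift, to which I would apply either a Moser--Alikakos iteration on $\int_\Omega(u^p+w^p)$ with $p\to\infty$, or, more transparently, the smoothing $L^p$--$L^q$ estimates for the Neumann heat semigroup $\{e^{t\Delta}\}_{t\ge0}$ in the corresponding variation-of-constants representation. Since all input norms are uniform in $t$, the resulting bound $\|u(t)\|_{L^\infty}+\|w(t)\|_{L^\infty}\le C$ is uniform in $t$ as well (the case $\chi_3<0$ is only more favourable). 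Collecting the estimates yields \eqref{bdd-thm-fin} with $C_1$ independent of $t$, and since in particular $\|u(t)\|_{L^\infty}+\|w(t)\|_{L^\infty}+\|\tau_1v(t)\|_{W^{1,\infty}}+\|\tau_2z(t)\|_{W^{1,\infty}}$ stays finite, the criterion \eqref{u-w blow-up criterion} forces $T_m=\infty$.

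The only genuine, and mild, obstacle is the $L^p$-bootstrap: one must choose the interpolation and Young exponents so that the cross-diffusion contributions are truly absorbed by the diffusion dissipation, and --- crucially for the uniform-in-time assertion in \eqref{bdd-thm-fin} --- keep every constant independent of $t$, which is possible precisely because the $L^2$- and $W^{1,q}$-inputs from Lemmas \ref{uw-l2-bdd-lem} and \ref{uw-l2-bdd-lem-pp} already are. The smallness conditions in \eqref{small mass bdd} enter only through the hypotheses of Lemmas \ref{u+w-log-bdd-lem} and \ref{fully PP} and play no further role at this stage.
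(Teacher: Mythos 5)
Your proposal is correct and follows essentially the same route as the paper: a testing/Gagliardo--Nirenberg bootstrap of the $L^2$-bounds on $(u,w)$ to higher $L^p$, feeding the result into Lemma \ref{reciprocal-lem} to get $\|v\|_{W^{1,\infty}}+\|z\|_{W^{1,\infty}}\le C$, then the Neumann heat semigroup smoothing estimates on the variation-of-constants formula for $u$ and $w$, and finally the extensibility criterion \eqref{u-w blow-up criterion}. The only (inessential) difference is that the paper stops the bootstrap at $p=3$, which already suffices for Lemma \ref{reciprocal-lem} with $q=\infty$, rather than running it for all finite $p$.
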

\begin{proof}
Multiplying the $u$-equation in \eqref{Loop equations} by $3u^{2}$, integrating over $\Omega$ by parts and applying  the $(L^2, L^8)$-boundedness of $(u, \nabla v)$ in Lemma \ref{uw-l2-bdd-lem} or \ref{uw-l2-bdd-lem-pp},  Young's inequality \eqref{Young} and the 2D G-N inequality, we conclude, for $t\in(0, T_m)$, that
\begin{align*}
\frac{d}{dt}\int_\Omega u^3+\int_\Omega u^3
+3\int_\Omega u |\nabla u|^2&\leq 3\chi_1^2\int_\Omega u^3|\nabla v|^2+\int_\Omega u^3\\
&\leq 3\chi_1^2\int_\Omega   u^4+\frac{3^4\chi_1^2}{4^4}\int_\Omega |\nabla v|^8+\int_\Omega u^3\\
&\leq 4\chi_1^2\| u^\frac{3}{2}\|_{L^\frac{8}{3}}^\frac{8}{3}+C_1\\
&\leq 4\chi_1^2C_2\left(\|\nabla  u^\frac{3}{2}\|_{L^2}^\frac{4}{3}\| u^\frac{3}{2}\|_{L^\frac{4}{3}}^\frac{4}{3}+\| u^\frac{3}{2}\|_{L^\frac{4}{3}}^\frac{8}{3}\right)+C_1\\
&\leq C_3 \|\nabla  u^\frac{3}{2}\|_{L^2}^\frac{4}{3}+C_3\\
&\leq \int_\Omega  u|\nabla u|^2+C_4,
\end{align*}
from which the uniform $L^3$-boundedness of $u$ follows. Applying the same argument to $w$-equation and noticing the $(L^2, L^8, L^8)$-boundedness of $(w, \nabla v, \nabla z)$, one can readily show the uniform $L^3$-boundedness of $w$.  Consequently, a simple application of Lemma \ref{reciprocal-lem} gives rise to
\be\label{u+w-L3-bdd}
\|u\|_{L^3}+\|w\|_{L^3}+\|v\|_{W^{1,\infty}}+\|z\|_{W^{1,\infty}}\leq C_5, \ \ \ \forall t\in(0, T_m).
\ee
To derive the $L^\infty$-boundedness of $u$, based on \eqref{u+w-L3-bdd}, we employ  the variation-of-constants formula for the  $u$-equation in \eqref{Loop equations}   and the well-known smoothing $L^p$-$L^q$-estimates for the Neumann heat  semigroup  $\{e^{t\Delta}\}_{t\geq0}$ (\cite{Win10-JDE, Cao15}) to conclude that
\begin{align*}
\|u(t)\|_{L^\infty}&\leq \|e^{t\Delta }u_0\|_{L^\infty}+\chi_1\int_0^t\left\|e^{(t-s)\Delta }\nabla \cdot((u\nabla v)(s))\right\|_{L^\infty}ds \\
  &\leq \|u_0\|_{L^\infty}+C_6\chi_1\int_0^t\left(1+(t-s)^{-\frac{1}{2}-\frac{1}{3}}\right)e^{-\lambda_1(t-s) }\left\|(u\nabla v)(s)\right\|_{L^3}ds\\
 &\leq  \|u_0\|_{L^\infty}+C_6\chi_1\int_0^t\left(1+(t-s)^{-\frac{1}{2}-\frac{1}{3}}\right)e^{-\lambda_1(t-s) }\left\|u\right\|_{L^3}\left\|\nabla v\right\|_{L^\infty}ds\\
 & \leq \|u_0\|_{L^\infty}+C_7\chi_1\int_0^t\left(1+\sigma^{-\frac{5}{6}}\right)e^{-\lambda_1\sigma }d\sigma\\
   &\leq \|u_0\|_{L^\infty}+C_8\chi_1, \quad   \quad \forall t\in(0, T_m).
  \end{align*}
Here, $\lambda_1(>0)$ is the first nonzero eigenvalue of $-\Delta$ under homogeneous Neumann boundary condition.  Performing the same argument to the variation-of-constants formula for the   $w$-equation  and using \eqref{u+w-L3-bdd}, we get the uniform $L^\infty$-boundedness of $w$. To sum up, we have shown that
\be\label{u+w-Linfty-bdd}
\|u(t)\|_{L^\infty}+\|w(t)\|_{L^\infty}+\|v(t)\|_{W^{1,\infty}}+\|z(t)\|_{W^{1,\infty}}\leq C_9, \ \ \ \forall t\in(0, T_m).
\ee
By the extensibility criterion \eqref{u-w blow-up criterion} in Lemma \ref{2.l1}, we first infer  that $T_m=\infty$, and then,  the desired uniform boundedness \eqref{bdd-thm-fin} is simply  \eqref{u+w-Linfty-bdd}; that is, the classical solution $(u,v,w,z)$ of \eqref{Loop equations} is global in time and is uniformly bounded.
\end{proof}

\subsection{Higher order gradient estimates}

 For our stabilization purpose below, given the uniform $L^1$-boundedness of $(u\ln u, w\ln w)$, we  proceed to show further higher order gradient estimates away from the initial time $t=0$ as stated in \eqref{gradUW-improve-thm}, which is of interest for its own sake, on the other hand.

\begin{lemma}\label{gradUW-improve-lem}  If $\|(u\ln u)(t)\|_{L^1}+\|(w\ln w)(t)\|_{L^1}$ is uniformly bounded on $(0, T_m)$, then $T_m=\infty$ and  the combined higher order gradient estimate \eqref{gradUW-improve-thm} holds.
\end{lemma}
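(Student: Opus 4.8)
Since the hypothesis is precisely the uniform $L^1$-bound on $(u\ln u,w\ln w)$ produced by Lemmas~\ref{u+w-log-bdd-lem} and~\ref{fully PP}, the plan is to feed it into the coupled bootstrap of Subsections~3.1--3.2: Lemmas~\ref{uw-l2-bdd-lem}/\ref{uw-l2-bdd-lem-pp} upgrade it to the $L^2$-bounds, and Lemma~\ref{linfty-bdd-lem} then yields $T_m=\infty$ together with the uniform estimate \eqref{bdd-thm-fin} on $\|u\|_{L^\infty}+\|w\|_{L^\infty}+\|v\|_{W^{1,\infty}}+\|z\|_{W^{1,\infty}}$ on $[0,\infty)$. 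Because $u_0,w_0$ are only assumed continuous, no gradient control is available at $t=0$, so every higher-order bound will be obtained \emph{for $t\geq1$} by the standard device: derive a pointwise differential inequality $y'(t)\leq a(t)\,y(t)+b(t)$ for the quantity $y$ at hand, establish $\int_t^{t+1}\big(a+b+y\big)\,ds\leq C$ uniformly in $t\geq\tfrac12$ (the integral bound on $y$ coming from the dissipation produced by the previous step), and conclude $y(t)\leq C$ for $t\geq1$ via a uniform Gr\"onwall lemma. Throughout, $\Delta v$ and $\Delta z$ are handled by the reciprocal regularity of $w$ and $u$: when $\tau_1=\tau_2=0$ directly from $\Delta v=v-w$, $\Delta z=z-u$ and elliptic $W^{m,p}$-regularity, and when $\tau_1,\tau_2>0$ from the $L^p$--$L^q$ smoothing of $\{e^{t\Delta}\}_{t\geq0}$ and parabolic Schauder estimates applied on unit time windows $[t-\tfrac12,t]$.

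With this scheme fixed, I would climb --- \emph{simultaneously} for $u,w$ (hence $v,z$) --- the four-rung ladder announced in the introduction. \textbf{(i)} Testing the $u$- and $w$-equations with $-\Delta u$ and $-\Delta w$, writing $\nabla\!\cdot\!(u\nabla v)=\nabla u\cdot\nabla v+u\,\Delta v$ and its analogue, bounding the drift terms through \eqref{bdd-thm-fin} and $\|\Delta v\|_{L^p}+\|\Delta z\|_{L^p}\leq C_p$ (a consequence of $\|u\|_{L^\infty}+\|w\|_{L^\infty}\leq C$), and absorbing by the 2D Gagliardo--Nirenberg inequality of Lemma~\ref{G-N}, one gets $\|\nabla u(t)\|_{L^2}+\|\nabla w(t)\|_{L^2}\leq C$ for $t\geq1$ and the integrated dissipation $\int_t^{t+1}\big(\|\Delta u\|_{L^2}^2+\|\Delta w\|_{L^2}^2\big)\,ds\leq C$. \textbf{(ii)} Estimating $\frac{d}{dt}\int_\Omega\big(|\nabla u|^4+|\nabla w|^4\big)$ and using the step-(i) output, the reciprocal regularity of $v,z$, and Gagliardo--Nirenberg to absorb the third-order and curvature contributions yields $\|\nabla u(t)\|_{L^4}+\|\nabla w(t)\|_{L^4}\leq C$ for $t\geq1$. \textbf{(iii)} Estimating $\frac{d}{dt}\int_\Omega\big(|\Delta u|^2+|\Delta w|^2\big)$, controlling $\nabla\big(\nabla\!\cdot\!(u\nabla v)\big)$ by $\|\nabla u\|_{L^4}$ and the second derivatives of $v$ (tied to $\|w\|_{H^2}$ and closed in the coupled system), and using Young's inequality to absorb $\|\nabla\Delta u\|_{L^2}^2$, gives $\|\Delta u(t)\|_{L^2}+\|\Delta w(t)\|_{L^2}\leq C$, hence $\|u(t)\|_{H^2}+\|w(t)\|_{H^2}\leq C$; feeding $w\in H^2$ into the $v$-equation and $u$ into the $z$-equation then produces $\|v(t)\|_{W^{2,\infty}}+\|z(t)\|_{W^{2,\infty}}\leq C$. \textbf{(iv)} Finally, estimating $\frac{d}{dt}\int_\Omega\big(|\Delta u|^4+|\Delta w|^4\big)$ --- the higher-order $(v,z)$-terms that appear now being controlled by $\|u\|_{W^{2,4}}$, $\|w\|_{W^{2,4}}$ within the coupled system --- gives $\|\Delta u(t)\|_{L^4}+\|\Delta w(t)\|_{L^4}\leq C$, so $\|u(t)\|_{W^{2,4}}+\|w(t)\|_{W^{2,4}}\leq C$ and, by $W^{2,4}(\Omega)\hookrightarrow C^{1,\alpha}(\bar\Omega)$ in dimension two, $\|u(t)\|_{W^{1,\infty}}+\|w(t)\|_{W^{1,\infty}}\leq C$; plugging $w\in W^{2,4}$ into $\Delta v=v-w$ (and $u$ into $\Delta z=z-u$) then gives $v,z\in W^{4,4}\hookrightarrow C^{3,\alpha}$ when $\tau_1=\tau_2=0$, which is the $W^{3,\infty}$- and $W^{4,4}$-content of \eqref{gradUW-improve-thm}; when $\tau_1,\tau_2>0$ one further round of heat-semigroup smoothing upgrades $v,z$ to $W^{3,\infty}$, and no $W^{4,4}$ bound is claimed, consistently with the indicators $I_{\{\tau_i=0\}}$. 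Collecting the uniform-in-$t\geq1$ bounds from (i)--(iv) yields \eqref{gradUW-improve-thm}.

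I expect the main obstacle to be the reciprocal coupling: the $u$-equation feels $\nabla v$ and $D^2v$, which are governed by $w$, and symmetrically the $w$-equation feels $\nabla z$, $D^2z$ through $u$; hence no single-species estimate closes, and each rung of the ladder must be run as a coupled (two-, effectively four-component) system of differential inequalities whose cross terms are engineered to absorb one another. Making the constants close rests on having the uniform $L^\infty$-bound \eqref{bdd-thm-fin} as the base rung and on careful Gagliardo--Nirenberg/Young bookkeeping at each step; the subsidiary difficulties are the uniform-Gr\"onwall passage from time-integrated to pointwise bounds (forced by the mere continuity of $u_0,w_0$) and, in the fully parabolic case, replacing the elliptic regularity of $(v,z)$ by $L^p$--$L^q$ semigroup smoothing and parabolic Schauder estimates in order to climb from $w\in L^\infty$ all the way up to $v\in W^{3,\infty}$.
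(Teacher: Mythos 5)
Your proposal follows essentially the same route as the paper: the same base step (uniform $L^1$-bound on $(u\ln u,w\ln w)$ fed through the $L^2$-to-$L^\infty$ bootstrap to get $T_m=\infty$ and \eqref{bdd-thm-fin}), followed by the same coupled four-rung ladder $\|\nabla u\|_{L^2}\rightarrow\|\nabla u\|_{L^4}\rightarrow\|\Delta u\|_{L^2}+\|(v,z)\|_{W^{2,\infty}}\rightarrow\|\Delta u\|_{L^4}+\|(v,z)\|_{W^{3,\infty}}$, with Gagliardo--Nirenberg/Young absorption and the reciprocal elliptic/semigroup regularity of $(v,z)$ closing each rung. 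The only cosmetic differences are that the paper absorbs terms into ODIs of the form $y'+y\leq C$ (coupling with the $\tau_i\int|\Delta v|^2$ identities so the $\int|\nabla\Delta v|^2$ dissipation cancels the cross terms) rather than invoking a uniform Gr\"onwall lemma on unit windows, and it derives the regularity of $(v,z)$ by applying $\Delta$ to their equations and reusing Lemma~\ref{reciprocal-lem}; both are equivalent to what you describe.
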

\begin{proof}
Due to the uniform $L^1$-boundedness of $(u\ln u, w\ln w)$, one can use the same arguments as Lemma \eqref{fully PP}, \ref{uw-l2-bdd-lem} or \ref{uw-l2-bdd-lem-pp} to show the uniform boundedness of $\|v\|_{H^2}$ and $\|z(t)\|_{H^2}$  for $t\in (\min\{1, \frac{T_m}{2}\}, T_m)$, and repeating the argument in previous subsections, one can easily obtain first the uniform  estimate \eqref{u+w-Linfty-bdd} with $T_m=\infty$, and then $(u,v,w,z)\in \left(C^{2,1}(\bar{\Omega}\times[1,\infty))\right)^4$. Then, to get higher order gradient estimates, we begin to test the $u$-equation in \eqref{Loop equations} by $-2\Delta u$ and use \eqref{u+w-Linfty-bdd} to get
\be\label{grad U-diff}
\begin{split}
\frac{d}{dt}\int_\Omega |\nabla u|^2&+\int_\Omega |\nabla u|^2+2\int_\Omega |\Delta u|^2\\
&=2\chi_1\int_\Omega \left(\nabla u\nabla v+u\Delta v\right)\Delta u+\int_\Omega |\nabla u|^2\\
&\leq \int_\Omega |\Delta u|^2+C_1\int_\Omega |\nabla u|^2+C_2.
\end{split}
\ee
The 2D Gagilardo-Nirenberg inequality and the $H^2$-elliptic estimate together imply
\be\label{grad-gn} \begin{split}
C_1\|\nabla u\|_{L^2}^2\leq C_3\left(\|D^2 u\|_{L^2}^\frac{1}{2}\|u\|_{L^2}^\frac{1}{2}+\|u\|_{L^2}\right)^2\leq &C_4\left(\|D^2 u\|_{L^2}^\frac{1}{2}+1\right)^2\\
&\leq \|\Delta u\|_{L^2}^2+C_5.
\end{split}
\ee
Substituting \eqref{grad-gn} into \eqref{grad U-diff},  we end up with
$$
\frac{d}{dt}\int_\Omega |\nabla u|^2+\int_\Omega |\nabla u|^2\leq C_6,
$$
yielding directly the unform boundedness of $\|\nabla u\|_{L^2}$. The same type argument applied to the $w$-equation in  \eqref{Loop equations} gives the unform boundedness of $\|\nabla w\|_{L^2}$.

Now, we again use the $u$-equation in  \eqref{Loop equations} to calculate that
\be\label{grad u-L4-diff}
\begin{split}
&\frac{1}{2}\frac{d}{dt}\int_\Omega |\nabla u|^4+\int_\Omega \left|\nabla |\nabla u|^2\right|^2+2\int_\Omega |\nabla u|^2|D^2u|^2\\
&\ =-2\chi_1\int_\Omega |\nabla u|^2\nabla u\cdot\nabla\left(\nabla u\nabla v+u\Delta v\right)+\int_{\partial \Omega} |\nabla  u|^2\frac{\partial}{\partial \nu}|\nabla u|^2.
\end{split}
\ee
By direct computations, we discover that
\be\label{gradgrad-com}
 \nabla(\nabla u\nabla v)=u_{x_1}\nabla v_{x_1}+v_{x_1}\nabla u_{x_1}+u_{x_2}\nabla v_{x_2}+v_{x_2}\nabla u_{x_2}.
\ee
With these, we then employ \eqref{grad u-L4-diff} and \eqref{u+w-Linfty-bdd} to estimate, for any $\epsilon_i>0$, that
\begin{equation*}
\begin{split}
&-2\chi_1\int_\Omega |\nabla u|^2 u_{x_1}\nabla u\nabla v_{x_1}\leq \epsilon_1\int_\Omega |\nabla u|^6+C_{\epsilon_1} \int_\Omega |\nabla v_{x_1}|^3, \\
&-2\chi_1\int_\Omega |\nabla u|^2 v_{x_1}\nabla u\nabla u_{x_1}\leq \epsilon_2 \int_\Omega |\nabla u|^2|D^2 u|^2+C_{\epsilon_2} \int_\Omega |\nabla u|^4, \\
&-2\chi_1\int_\Omega |\nabla u|^2 u_{x_2}\nabla u\nabla v_{x_2}\leq \epsilon_3\int_\Omega |\nabla u|^6+C_{\epsilon_3} \int_\Omega |\nabla v_{x_2}|^3, \\
&-2\chi_1\int_\Omega |\nabla u|^2 v_{x_2}\nabla u\nabla u_{x_2}\leq \epsilon_4 \int_\Omega |\nabla u|^2|D^2 u|^2+C_{\epsilon_4} \int_\Omega |\nabla u|^4
\end{split}
\end{equation*}
and further that
\begin{equation*}
\begin{split}
&-2\chi_1\int_\Omega |\nabla u|^4\Delta v\leq \epsilon_5\int_\Omega |\nabla u|^6+C_{\epsilon_5} \int_\Omega |\Delta v|^3,\\
&-2\chi_1\int_\Omega u|\nabla u|^2\nabla u \nabla \Delta v\leq \epsilon_6\int_\Omega |\nabla u|^6+C_{\epsilon_6}\int_\Omega |\nabla \Delta v|^2\\
&C_{\epsilon_1} \int_\Omega |\nabla v_{x_1}|^3+C_{\epsilon_3} \int_\Omega |\nabla v_{x_2}|^3\\
&\leq C_{\epsilon_1,\epsilon_3}  \int_\Omega |D^2 v|^3+C_{\epsilon_1,\epsilon_3} \leq \tilde{C}_{\epsilon_1,\epsilon_3}  \int_\Omega |\Delta v|^3+\tilde{C}_{\epsilon_1,\epsilon_3} \leq \int_\Omega |\nabla \Delta v|^2+\hat{C}_{\epsilon_1,\epsilon_3},
\end{split}
\end{equation*}
where we have applied the uniform boundedness of $\| v\|_{H^2}$,   the 2D G-N interpolation inequality  and the $W^{2,3}$-elliptic estimate  in the last estimate.  Here and below, by $W^{i+2,p}$-elliptic estimate with $i\in \mathbb{N}, p>1$ (cf. \cite{ADN59,ADN64, Ladyzenskaja710}), we mean there exists $C_{i,p}>0$ such that, for any  function $f\in W^{i+2,p}(\Omega)$,  it follows
$$
\|f\|_{W^{i+2,p}}\leq C_{i,p}\left(\|\Delta f\|_{W^{i,p}}+\|f\|_{L^p}\right).
$$
In view of the uniform boundedness of $\|\nabla u\|_{L^2}$ and the fact fact $\frac{\partial u}{\partial \nu}=0$ on $\partial \Omega$,  by the 2D G-N inequality and boundary trace  embedding, the following two estimates are quite known (cf. \cite[(3.31) and (3.32)]{Xiang18-JMP} for example):
\begin{equation*}
 \int_\Omega |\nabla u|^6\leq C_7\int_\Omega |\nabla|\nabla u|^2|^2+C_7, \  \ \ \ \
\int_{\partial \Omega} |\nabla  u|^2\frac{\partial}{\partial \nu}|\nabla u|^2\leq \epsilon_7 \int_\Omega |\nabla|\nabla u|^2|^2+C_{\epsilon_7}.
\end{equation*}
Substituting these estimates into \eqref{grad u-L4-diff} and choosing sufficiently small $\epsilon_i$, we infer
\be\label{grad u-L4-diff-fin}
\frac{d}{dt}\int_\Omega |\nabla u|^4+\int_\Omega |\nabla u|^4\leq C_8\int_\Omega |\nabla \Delta v|^2+C_8.
\ee
 Finally, we combine \eqref{grad u-L4-diff-fin} with \eqref{u+w-l2-diff} to derive an ODI as follows:
$$
\frac{d}{dt}\int_\Omega \left(|\nabla u|^4+C_8\tau_1|\Delta v|^2\right)+\int_\Omega \left(|\nabla u|^4+2C_8|\Delta v|^2\right)\leq C_9,
$$
which trivially yields the uniform  boundedness of $\|\nabla u\|_{L^4}$. Doing the same argument to the $w$-equation in  \eqref{Loop equations} shows the unform boundedness of $\|\nabla w\|_{L^4}$.

Now, we once again use the $u$-equation in \eqref{Loop equations} and use \eqref{delta vl3-bdd by},  \eqref{u+w-Linfty-bdd},  \eqref{gradgrad-com} and the elliptic $H^2$-estimate  to bound
\be\label{delta u-diff}
\begin{split}
&\frac{d}{dt}\int_\Omega |\Delta u|^2+\int_\Omega |\Delta u|^2+2\int_\Omega |\nabla \Delta u|^2\\
&=2\chi_1\int_\Omega \nabla\left(\nabla u\nabla v+u\Delta v\right)\nabla\Delta u+\int_\Omega |\Delta u|^2\\
&\leq \int_\Omega |\nabla \Delta u|^2+C_{10}\int_\Omega |D^2 u|^2+C_{10}\int_\Omega |\nabla  u|^4+\int_\Omega |\Delta u|^2\\
&\ \ +C_{10}\int_\Omega |D^2 v|^2 +C_{10}\int_\Omega |\Delta v|^4 +C_{10}\int_\Omega |\nabla \Delta v|^2\\
&\leq \int_\Omega |\nabla \Delta u|^2+C_{11}\int_\Omega |\Delta u|^2+C_{11}\int_\Omega |\Delta  v|^4+C_{11}\int_\Omega |\nabla \Delta v|^2+C_{11}\\
&\leq 2\int_\Omega |\nabla \Delta u|^2+C_{12}\int_\Omega |\nabla \Delta v|^2+C_{12},
\end{split}
\ee
where we have used the uniform $(H^1, H^2)$-boundness of $(u,v)$  in the last two lines. Combining \eqref{delta u-diff}    with \eqref{u+w-l2-diff}, we then derive a key ODI as follows:
$$
\frac{d}{dt}\int_\Omega \left(|\Delta u|^2+C_{12}\tau_1|\Delta v|^2\right)+\int_\Omega \left(|\Delta u|^2+2C_{12}|\Delta v|^2\right)\leq C_{13},
$$
showing the   uniform  boundedness of $\|\Delta u\|_{L^2}$. The same argument applied to the $w$-equation in  \eqref{Loop equations} shows the unform boundedness of $\|\Delta w\|_{L^2}$. In light of our gained uniform $H^1$-boundedness of $(u,w)$, the $W^{2,2}$-elliptic estimate and the 2D Sobolev embedding $W^{2,2}(\Omega)\hookrightarrow W^{1, q}(\Omega)$ for all $q\in(1,\infty)$, we end up with
\be\label{uw-h2-est}
\begin{cases}
\|u(t)\|_{W^{2,2}}+\|w(t)\|_{W^{2,2}}\leq C_{14},  \  t\geq 1, \\[0.2cm]
  q<\infty, \   \|u(t)\|_{W^{1, q}}+\|w(t)\|_{W^{1, q}}\leq C_{q},  \  t\geq1.
\end{cases}
\ee
 Now, taking $\Delta $ operator and normal derivative  on the $v$ and $z$ equations in  \eqref{Loop equations} and using the facts $\nu\cdot\nabla v|_{\partial \Omega}=0=\nu\cdot\nabla z|_{\partial \Omega}$, we discover that
\be\label{grad-eqs2}
\begin{cases}
\tau_1 (\Delta v)_t = \Delta (\Delta v)-\Delta v+ \Delta w  &\text{in } \Omega\times(1,\infty), \\[0.2cm]
\tau_2 (\Delta z)_t = \Delta (\Delta z)-\Delta z+ \Delta u &\text{in } \Omega\times(1,\infty), \\[0.2cm]
\frac{\partial \Delta v}{\partial \nu}=\frac{\partial \Delta z}{\partial \nu}=0&\text{on } \partial\Omega\times(1,\infty).
\end{cases}
\ee
By the facts that $\left(\Delta u(t), \  \Delta w(t)\right) \in \left(C(\bar{\Omega})\right)^2$, the standard Schauder regularity says that
$\left(\Delta v(t), \  \Delta z(t)\right) \in \left(C^2(\bar{\Omega})\right)^2$. Therefore,  applying $W^{2,2}$-estimate or $W^{1,q}$-estimate in Lemma  \ref{reciprocal-lem}  to  \eqref{grad-eqs2} , we obtain   that
\be\label{vz-dradv2-est}
\begin{cases}
\|\Delta v(t)\|_{W^{2,2}}+\|\Delta z(t)\|_{W^{2,2}}\leq C_{15},  \  t\geq 1, &\text{ if  } \tau_1=\tau_2=0, \\[0.2cm]
q>1,
  \ \|\Delta v(t)\|_{W^{1, q}}+\|\Delta z(t)\|_{W^{1, q}}\leq C_q,  \  t\geq1, &\text{ if  } \tau_1, \tau_2>0.
\end{cases}
\ee
Thus,  by  $W^{4,2}$-elliptic estimate if $\tau_1=\tau_2=0$ or  $W^{3,3}$-elliptic estimate  if $\tau_1, \tau_2>0$  and Sobolev embeddings $W^{4,2}(\Omega)\hookrightarrow W^{3,3}(\Omega)\hookrightarrow W^{2,\infty}(\Omega)$, there exists $C_{16}>0$ such that
\be\label{vz-c2-est}
\|v(t)\|_{W^{2, \infty}}+\|z(t)\|_{W^{2, \infty}}\leq C_{16}, \quad  t\geq 1.
\ee
Progressively, we again utilize the $u$-equation in \eqref{Loop equations} and utilize  \eqref{delta vl3-bdd by},  \eqref{u+w-Linfty-bdd}, \eqref{uw-h2-est}, \eqref{vz-dradv2-est} and \eqref{vz-c2-est}   to bound
\be\label{delta ul4-diff}
\begin{split}
&\frac{1}{2}\frac{d}{dt}\int_\Omega |\Delta u|^4+\int_\Omega |\Delta u|^4+6\int_\Omega |\Delta u|^2|\nabla \Delta u|^2\\
&=6\chi_1\int_\Omega (\Delta u)^2\nabla \left(\nabla u\nabla v+u\Delta v\right)\nabla(\Delta u)+\int_\Omega |\Delta u|^4\\
&\leq  \int_\Omega |\Delta u|^2|\nabla \Delta u|^2+\int_\Omega |\Delta u|^4+C_{17}\int_\Omega |\Delta u|^2|\nabla  u|^2\\
&\ \ \ +C_{17}\int_\Omega |\Delta u|^2|D^2  u|^2 +C_{17}\int_\Omega |\Delta u|^2|\nabla \Delta v|^2\\
&\leq  \int_\Omega |\Delta u|^2|\nabla \Delta u|^2+C_{18}\int_\Omega |\nabla u|^4+C_{18}\int_\Omega |\Delta u|^4 \\
&\quad   +C_{18}\int_\Omega |D^2 u|^4+C_{18}\int_\Omega |\nabla \Delta v|^4\\
&\leq  \ \  \int_\Omega |\Delta u|^2|\nabla \Delta u|^2+C_{19}\int_\Omega |\Delta u|^4+C_{19}\int_\Omega |D^2 u|^4+C_{19}.
\end{split}
\ee
We next use  the uniform boundedness of $\|u\|_{H^2}$, the $W^{2, 4}$-elliptic estimate and the 2D G-N interpolation inequality in \eqref{G-N-I} to infer, for any $\epsilon>0$,  that
\begin{equation*}
\begin{split}
\|D^2u\|_{L^4}^4&\leq C_{20}\left(\|\Delta u\|_{L^4}+\|u\|_{L^4}\right)^4\\
 &\leq C_{21}\left(\|\Delta u\|_{L^4}^4+1\right)\\
 &\leq  C_{22}\left(\|\nabla (\Delta u)^2\|_{L^2}
 \|(\Delta u)^2\|_{L^1}+\|(\Delta u)^2\|_{L^1}^2\right)\leq \epsilon\int_\Omega |\Delta u|^2|\nabla \Delta u|^2+C_\epsilon.
\end{split}
\end{equation*}
This along   with \eqref{delta ul4-diff} enables us to see that
$$
\frac{d}{dt}\int_\Omega |\Delta u|^4+\int_\Omega |\Delta u|^4\leq C_{24},
$$
showing the   uniform  boundedness of $\|\Delta u\|_{L^4}$. The same argument applied to the $w$-equation in  \eqref{Loop equations} entails the unform boundedness of $\|\Delta w\|_{L^4}$. Due to our established uniform $H^2$-boundedness of $(u,w)$, the $W^{2,4}$-elliptic estimate and the 2D Sobolev embedding $W^{2,4}(\Omega)\hookrightarrow W^{1, \infty}(\Omega)$,  we finally  conclude  that
\be\label{uw-w24-est}
\begin{cases}
\|u(t)\|_{W^{2,4}}+\|w(t)\|_{W^{2,4}}\leq C_{25},  \  t\geq 1, \\[0.2cm]
  \|u(t)\|_{W^{1, \infty}}+\|w(t)\|_{W^{1, \infty}}\leq C_{26},  \  t\geq1.
\end{cases}
\ee
Then applying $W^{2,4}$-estimate or $W^{1,q}$-estimate in Lemma  \ref{reciprocal-lem}  to  \eqref{grad-eqs2} , we see that
 \be\label{vz-c3-est}
\begin{cases}
\|\Delta v (t)\|_{W^{2,4}}+\|\Delta z(t)\|_{W^{2,4}}\leq C_{27},  \  t\geq 1, &\text{ if  } \tau_1=\tau_2=0, \\[0.2cm]
  \ \|\Delta v(t)\|_{W^{1, \infty}}+\|\Delta z(t)\|_{W^{1, \infty}}\leq C_{28},  \  t\geq1, &\text{ if  } \tau_1, \tau_2>0.
\end{cases}
\ee
 Consequently, by $W^{4,4}$-elliptic estimate  and  the Sobolev embedding  $W^{4,4}(\Omega)\hookrightarrow W^{3,\infty}(\Omega)$ if $\tau_1=\tau_2=0$ or $W^{3,\infty}$-elliptic estimate  if $\tau_1, \tau_2>0$,   our desired higher order estimate \eqref{gradUW-improve-thm} follows from  \eqref{uw-w24-est} and \eqref{vz-c3-est}.
\end{proof}

\subsection{Finite time blow-up} In this subsection, we show (B4) by detecing a line of masses on which the solution  of \eqref{Loop equations} blows up in finite time under \eqref{large mass blow-up}.
\begin{lemma} \label{link} Assume that  $\tau_1= \tau_2$ and $\chi_3=0$ and  assume that $(n,c)$ solves
\be \label{nc-1}\begin{cases}
n_t =  \nabla\cdot\left(\nabla n-\chi_1n\nabla c\right) &\text{in } \Omega\times(0,\infty), \\[0.2cm]
\tau_1 c_t = \Delta c-c+\frac{\chi_2}{\chi_1}n  &\text{in } \Omega\times(0,\infty), \\[0.2cm]
\frac{\partial n}{\partial \nu}=\frac{\partial c}{\partial \nu}=0&\text{on } \partial\Omega\times(0,\infty), \\[0.2cm]
\left(n, \   \tau_1 c\right)=\left(u_0,\   \tau_1 v_0\right)&\text{in } \Omega\times\{0\}.
\end{cases}
\ee
Then, if $(w_0,\tau_2z_0)=(\frac{\chi_2}{\chi_1}u_0, \frac{\chi_1}{\chi_2}\tau_1v_0)$,  the unique solution of  \eqref{Loop equations} on its maximal existence time is given by $(u,v,w,z)=(n,c,\frac{\chi_2}{\chi_1}n,\frac{\chi_1}{\chi_2}c )$.
\end{lemma}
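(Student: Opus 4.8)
The plan is to reduce the claim to a direct substitution check together with the uniqueness statement of Lemma~\ref{2.l1}. It suffices to verify that the quadruple
\[
(\tilde u,\tilde v,\tilde w,\tilde z):=\Bigl(n,\ c,\ \tfrac{\chi_2}{\chi_1}\,n,\ \tfrac{\chi_1}{\chi_2}\,c\Bigr)
\]
is a classical solution of \eqref{Loop equations} carrying the prescribed initial data; once this is done, the uniqueness assertion of Lemma~\ref{2.l1} immediately forces $(u,v,w,z)=(\tilde u,\tilde v,\tilde w,\tilde z)$.

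First I would check the four PDEs. Since $\chi_3=0$, the $\tilde u$- and $\tilde v$-equations of \eqref{Loop equations} become, upon inserting $(\tilde u,\tilde v)=(n,c)$, literally the two equations of \eqref{nc-1}. For the $\tilde w$-equation, multiplying the $n$-equation in \eqref{nc-1} by $\chi_2/\chi_1$ and noting that $\nabla\tilde w-\chi_2\tilde w\nabla\tilde z=\tfrac{\chi_2}{\chi_1}(\nabla n-\chi_1 n\nabla c)$ turns it into $\tilde w_t=\nabla\cdot(\nabla\tilde w-\chi_2\tilde w\nabla\tilde z)$. For the $\tilde z$-equation, the elementary identity $\Delta\tilde z-\tilde z+\tilde u=\tfrac{\chi_1}{\chi_2}\bigl(\Delta c-c+\tfrac{\chi_2}{\chi_1}n\bigr)=\tfrac{\chi_1}{\chi_2}\,\tau_1 c_t$ combined with the hypothesis $\tau_2=\tau_1$ yields $\tau_2\tilde z_t=\Delta\tilde z-\tilde z+\tilde u$. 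The homogeneous Neumann conditions for $\tilde w,\tilde z$ are inherited from those for $n,c$ since the former are constant multiples of the latter, and the initial data are matched exactly by the assumption $(w_0,\tau_2 z_0)=(\tfrac{\chi_2}{\chi_1}u_0,\tfrac{\chi_1}{\chi_2}\tau_1 v_0)$, once more using $\tau_1=\tau_2$ for the $\tilde z$-slot. One also checks, trivially, that $(w_0,\tau_2 z_0)$ satisfies the regularity and positivity requirements \eqref{initi data reg} whenever $(u_0,\tau_1 v_0)$ does, and that $(\tilde u,\tilde v,\tilde w,\tilde z)$ has the regularity asserted in Lemma~\ref{2.l1}; both are immediate from the corresponding properties of $(n,c)$.

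With a classical solution of \eqref{Loop equations} thus constructed on the maximal existence interval $(0,\widetilde T_m)$ of $(n,c)$, uniqueness in Lemma~\ref{2.l1} gives $(u,v,w,z)=(\tilde u,\tilde v,\tilde w,\tilde z)$ on $(0,\min\{T_m,\widetilde T_m\})$. To see that the identity in fact holds on the full interval $(0,T_m)$, I would compare the two extensibility criteria: if $\widetilde T_m<T_m$, then along $t\nearrow\widetilde T_m$ the norms $\|u\|_{L^\infty},\|w\|_{L^\infty},\|\tau_1 v\|_{W^{1,\infty}},\|\tau_2 z\|_{W^{1,\infty}}$ stay bounded, hence so do $\|n\|_{L^\infty}=\|u\|_{L^\infty}$ and $\|\tau_1 c\|_{W^{1,\infty}}=\|\tau_1 v\|_{W^{1,\infty}}$, contradicting \eqref{u-w blow-up criterion} applied to \eqref{nc-1}; the reverse inequality $T_m<\widetilde T_m$ is excluded symmetrically. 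Hence $T_m=\widetilde T_m$ and the asserted representation holds throughout $\Omega\times(0,T_m)$. The only mildly delicate point is precisely this comparison of maximal existence times; the PDE verification itself is routine once one observes that the scaling factors $\chi_2/\chi_1$ and $\chi_1/\chi_2$ are forced and that the constraint $\tau_1=\tau_2$ is exactly what makes the fourth equation of \eqref{Loop equations} consistent with \eqref{nc-1}.
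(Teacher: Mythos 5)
Your proposal is correct and follows essentially the same route as the paper: a direct substitution check that $(n,c,\tfrac{\chi_2}{\chi_1}n,\tfrac{\chi_1}{\chi_2}c)$ solves \eqref{Loop equations} with the prescribed data, followed by an appeal to the uniqueness in Lemma \ref{2.l1}. Your additional comparison of the two maximal existence times via the extensibility criterion \eqref{u-w blow-up criterion} is a detail the paper leaves implicit, but it does not change the argument.
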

\begin{proof}
By direct computations, one sees first that $(u,v,w,z)=(n,c,\frac{\chi_2}{\chi_1}n,\frac{\chi_1}{\chi_2}c )$ solves  \eqref{Loop equations}, and then, it is the unique solution of  \eqref{Loop equations} by uniqueness.
\end{proof}
Based on this observation, we use the well-known blow-up knowledge about \eqref{nc-1} to detect a blow-up line for our two-species and two-stimuli model \eqref{Loop equations}.

\begin{lemma} \label{blowup-lem} Let $\tau_1= \tau_2, \chi_3=0$ and  $( w_0,\ \tau_2z_0)=(\frac{\chi_2}{\chi_1}u_0, \ \frac{\chi_1}{\chi_2}\tau_1v_0)$. Assume that  \eqref{large mass blow-up} is satisfied  and  $\int_\Omega u_0(x)|x-x_0|^2dx $ is sufficiently small for $x_0\in \bar\Omega$. Then the solution of the IBVP \eqref{Loop equations} blows up in a finite time $T>0$ according to \eqref{blowup-new}.
\end{lemma}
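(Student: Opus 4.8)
The plan is to reduce the assertion to the classical finite-time blow-up of the minimal Keller--Segel model via the structural identity of Lemma \ref{link}. Since $\tau_1=\tau_2$, $\chi_3=0$ and the initial data are taken as $(w_0,\tau_2z_0)=(\frac{\chi_2}{\chi_1}u_0,\frac{\chi_1}{\chi_2}\tau_1v_0)$, Lemma \ref{link} guarantees that the unique solution of \eqref{Loop equations} on its maximal existence interval equals $(u,v,w,z)=(n,c,\frac{\chi_2}{\chi_1}n,\frac{\chi_1}{\chi_2}c)$, where $(n,c)$ solves \eqref{nc-1}; in particular $m_2=\frac{\chi_2}{\chi_1}m_1$, so the data automatically lie on the line $m_2\chi_1=m_1\chi_2$, and $T_m$ for \eqref{Loop equations} coincides with the maximal existence time of \eqref{nc-1}. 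Hence it suffices to produce, on that line, finite-time blow-up for \eqref{nc-1}.

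First I would normalize \eqref{nc-1}. Setting $\tilde c:=\frac{\chi_1}{\chi_2}c$ turns \eqref{nc-1} into
\[
n_t=\nabla\cdot\left(\nabla n-\chi_2\,n\nabla\tilde c\right),\qquad
\tau_1\tilde c_t=\Delta\tilde c-\tilde c+n\quad\text{in }\Omega,
\]
with homogeneous Neumann conditions and $\int_\Omega n(\cdot,0)=m_1$; this is exactly the minimal model \eqref{KS-1} with chemotactic sensitivity $\chi_2$ and total mass $m_1$. On the line $m_2\chi_1=m_1\chi_2$ one has $m_1m_2\chi_1\chi_2=(m_1\chi_2)^2$, so the supercriticality hypothesis \eqref{large mass blow-up} is equivalent to $\chi_2m_1>\pi^*$, i.e.\ to the normalized minimal system carrying supercritical mass relative to the Trudinger--Moser threshold $\pi^*$ of Lemma \ref{TM-ineq} ($8\pi$ in a ball, $4\pi$ otherwise, the latter realized by placing the concentration point $x_0$ on $\partial\Omega$).

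Next I would invoke the known finite-time blow-up for the minimal model under supercritical mass and smallness of the second moment $\int_\Omega u_0(x)|x-x_0|^2\,dx$. In the parabolic--elliptic subcase $\tau_1=\tau_2=0$ this follows from the classical virial argument: differentiating $t\mapsto\int_\Omega n(x,t)|x-x_0|^2\,dx$ along \eqref{nc-1}, the result is dominated by $4m_1-\frac{\chi_2}{2\pi}m_1^2$ at an interior $x_0$ (respectively $4m_1-\frac{\chi_2}{\pi}m_1^2$ at a boundary point after the half-space reflection), plus boundary and zeroth-order terms that are absorbed once the moment is small; for $\chi_2m_1>\pi^*$ this forces the nonnegative moment to hit zero in finite time, impossible for a positive classical solution, whence $T_m<\infty$. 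In the fully parabolic subcase $\tau_1=\tau_2>0$ the virial identity is no longer closed, so I would instead quote the available constructions of finite-time blow-up solutions for the 2D parabolic--parabolic Keller--Segel model (for instance the radially symmetric ones in a disk, cf.\ \cite{HV97, HW01, A11}); choosing $\Omega$, the mass and the concentrated profile accordingly yields, through $\tilde c\mapsto\frac{\chi_2}{\chi_1}\tilde c$ and Lemma \ref{link}, a family of admissible data $(u_0,\tau_1v_0,w_0,\tau_2z_0)$ for \eqref{Loop equations} obeying \eqref{large mass blow-up} whose solution has finite maximal time $T$.

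Finally I would upgrade ``$T=T_m<\infty$'' to the precise statement \eqref{blowup-new}. Since $u=n$ and $w=\frac{\chi_2}{\chi_1}n$ are nonnegative and mass-conserving, argue by contradiction: were $\|(u\ln u)(t)\|_{L^1}+\|(w\ln w)(t)\|_{L^1}$ bounded on $(0,T_m)$, then Lemma \ref{gradUW-improve-lem} (equivalently (B2)) would give $T_m=\infty$, contradicting the finite-time breakdown just established; hence $\limsup_{t\nearrow T_m}\left(\|(u\ln u)(t)\|_{L^1}+\|(w\ln w)(t)\|_{L^1}\right)=\infty$, which is \eqref{blowup-new} with $T=T_m$. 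I expect the main obstacle to be the fully parabolic case: there the soft virial argument is unavailable, so one must genuinely borrow an existing finite-time blow-up construction for the 2D parabolic--parabolic minimal model and check that its mass/domain/profile constraints are compatible with lying on $m_2\chi_1=m_1\chi_2$ while satisfying $m_1m_2\chi_1\chi_2>(\pi^*)^2$; this bookkeeping, rather than any new PDE estimate, is the delicate point.
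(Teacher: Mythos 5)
Your proposal is correct and follows essentially the same route as the paper: reduce to the minimal Keller--Segel system via Lemma \ref{link}, observe that on the line $m_2\chi_1=m_1\chi_2$ the hypothesis \eqref{large mass blow-up} is exactly $m_1\chi_2>\pi^*$, invoke the known finite-time blow-up results for \eqref{nc-1}, and then upgrade to \eqref{blowup-new} by contradiction with Lemma \ref{gradUW-improve-lem}. The only difference is that you sketch the virial mechanism and flag the fully parabolic case explicitly, whereas the paper simply cites \cite{HV97, HW01, Horst-03, JL92, A11, A12, A13} for both regimes.
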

\begin{proof}
It follows from  $m_2=\int_\Omega w_0=\int_\Omega \frac{\chi_2}{\chi_1}u_0=\frac{\chi_2}{\chi_1}m_1$ that  $
m_1m_2\chi_1\chi_2=\left(m_1\chi_2\right)^2$. Thus, the large product condition \eqref{large mass blow-up} directly gives  $m_1\chi_2>\pi^*$. By the well-known  blow-up results about \eqref{nc-1} (cf. \cite{HV97, HW01, Horst-03, JL92, A11,A12,A13}), we know that the solution $(n,c)$ of \eqref{nc-1} blows up in a finite time $T>0$ in the sense that
$$
\limsup_{t\nearrow T}\left(\|n(t)\|_{L^\infty}+\|c(t)\|_{L^\infty}\right)=\infty.
$$
Then  Lemma \ref{link} simply  says that $(u,v,w,z)=(n,c,\frac{\chi_2}{\chi_1}n,\frac{\chi_1}{\chi_2}c )$ is a classical solution of \eqref{Loop equations} on $\bar{\Omega}\times[0, T)$ which blows up at $t=T$ even in the sense of \eqref{blowup-new}. Otherwise, the uniform $L^1$-boundedness of $(u\ln u, w\ln w)$ implies global boundedness (and thus no blow-up) by previous subsections, cf. Lemma \ref{gradUW-improve-lem}.
\end{proof}

\section{Exponential convergence for small  product of masses}

So far, we have proved the global boundedness of solutions to the IBVP  \eqref{Loop equations} under certain  smallness assumption on the product of masses and blow-up for certain large product.  In this section, we turn our attention to study the large time behavior of bounded solutions under (B3). To this end, we  find that it is more convenient to work on its equivalent system:
\begin{eqnarray}\label{equal system}
\begin{cases}
U_t =  \nabla\cdot\left(\nabla U-U\nabla V\right) &\text{in } \Omega\times(0,\infty), \\[0.2cm]
\tau_1 V_t = \Delta V-V+\eta_1(W-1)  &\text{in } \Omega\times(0,\infty), \\[0.2cm]
W_t =  \nabla\cdot\left(\nabla W-W\nabla Z-\chi W\nabla V\right) &\text{in } \Omega\times(0,\infty), \\[0.2cm]
\tau_2 Z_t = \Delta Z-Z+\eta_2(U-1)&\text{in } \Omega\times(0,\infty), \\[0.2cm]
\frac{\partial U}{\partial \nu}=\frac{\partial V}{\partial \nu}=\frac{\partial W}{\partial \nu}=\frac{\partial Z}{\partial \nu}=0&\text{on } \partial\Omega\times(0,\infty), \\[0.2cm]
\left(U, \   \tau_1V,\  W, \    \tau_2 Z\right)=\left(U_0,\   \tau_1 V_0,\    W_0, \  \tau_2 Z_0\right)&\text{in } \Omega\times\{0\}.
\end{cases}
\end{eqnarray}
Here, the newly introduced variables  satisfy the following transformations:
\begin{equation}\label{trans-id}
\begin{cases}
&U=\frac{u}{\bar{u}_0},\ \ \ U_0=\frac{u_0}{\bar{u}_0},\ \ \  V=\chi_1(v-\bar v), \ \ \  V_0=\chi_1(v_0-\bar{v}_0),\\[0.2cm]
&W=\frac{w}{\bar{w}_0}, \ \ \  W_0=\frac{w_0}{\bar{w}_0}, \ \ \   Z=\chi_2(z-\bar z), \ \ \ Z_0=\chi_2(z_0-\bar{z}_0),\\[0.2cm]
&\chi=\frac{\chi_3}{\chi_1}, \ \ \  \  \eta_1=\chi_1\bar{w}_0,\ \ \ \ \eta_2=\chi_2\bar{u}_0.
\end{cases}
\end{equation}
Then it follows simply from \eqref{equal system} and \eqref{trans-id} that
\be\label{UVWZ-int}
\bar U=1=\bar W,\quad \quad \quad  \bar V=0=\bar Z.
\ee
Then,  with $k$ defined in \eqref{k-def}, an easy use of Lemma \ref{k-const} shows  that
\be\label{UW-l2}
\begin{cases}
&\|U-1\|_{L^2}^2\leq k\|\nabla U^\frac{1}{2}\|_{L^2}^2,\\[0.25cm]
&\|W-1\|_{L^2}^2\leq k\|\nabla W^\frac{1}{2}\|_{L^2}^2.
\end{cases}
\ee
Let us begin with the simpler case when $\tau_1=\tau_2=0$. In this case, our convergence will rely on building a conditional  Lyapunov functional of the form:
 \be\label{F-def-con-pe}
\mathcal{G}(t)=\frac{\eta_2}{\eta_1}\int_{\Omega}U\ln U+\int_{\Omega}W\ln W.
 \ee
To that purpose,   we multiply  the first equation  by $\ln U$ and the third equation by  $\ln W$ in \eqref{equal system} and then integrate over $\Omega$  by parts to infer
  \be\label{UlnU-id-pe}
\begin{split}
&\frac{d}{dt}\int_{\Omega}U\ln U+4\int_{\Omega}|\nabla U^{\frac{1}{2}}|^2\\
&=\eta_1\int_{\Omega}(U-1)(W-1)-\int_{\Omega}(U-1)V
\end{split}
\end{equation}
and, similarly,
\begin{equation} \label{WlnW-id-pe}
\begin{split}
\frac{d}{dt}\int_\Omega W\ln W+4\int_{\Omega}|\nabla W^{\frac{1}{2}}|^2&=\eta_2\int_{\Omega}(U-1)(W-1)-\int_{\Omega}(W-1)Z\\
&\  \ +\eta_1\chi\int_{\Omega}(W-1)^2-\chi\int_{\Omega}(W-1)V.
\end{split}
\end{equation}
Thanks to $\tau_1=\tau_2=0$,  we see from the second and fourth equation in \eqref{equal system} that
\be\label{VZ-l2-by}
\begin{cases}
2\int_\Omega |\nabla V|^2+2\int_\Omega V^2=2\eta_1\int_\Omega (W-1)V\leq\int_\Omega V^2+\eta_1^2\int_\Omega (W-1)^2,\\[0.25cm]
2\int_\Omega |\nabla Z|^2+2\int_\Omega Z^2=2\eta_2\int_\Omega (U-1)Z\leq\int_\Omega Z^2+\eta_2^2\int_\Omega (U-1)^2 .
\end{cases}
\ee
With those computations, we next derive the derivative of $\mathcal{G}$  and its decay property.
 \begin{lemma}  \label{con-lem-pe} When $\tau_1=\tau_2=0$, the derivative of  $\mathcal{G}$  defined in \eqref{F-def-con-pe} satisfies
\begin{equation}\label{F-diif-con-pe}
\begin{split}
&\mathcal{G}'(t)+\frac{4\eta_2}{\eta_1}\int_\Omega|\nabla U^{\frac{1}{2}}|^2+4\int_{\Omega}|\nabla W^{\frac{1}{2}}|^2\\
&=2\eta_2\int_\Omega(U-1)(W-1)-\frac{\eta_2}{\eta_1}\int_{\Omega}(U-1)V\\
&\ -\int_{\Omega}(W-1)Z
+\eta_1\chi\int_{\Omega}(W-1)^2-\chi\int_{\Omega}(W-1)V, \ \ t\in [0, T_m).
\end{split}
\end{equation}
Moreover, if
\be\label{eta1-eta2-small-pe}
k^2\eta_1\eta_2+k\eta_1\chi^+<4,\quad \chi^+=\max\{\chi, \ \ 0\},
 \ee
  then $\mathcal{G}$ decays exponentially according to
\be\label{f-decay-pe}
0\leq \mathcal{G}(t)\leq \mathcal{G}(0)e^{-\mu t},\quad t\in [ 0, T_m),
\ee
where the positive and explicit decay rate $\mu$ is given by
\be\label{decay-rate1}
\mu=\frac{\left(4-k^2\eta_1\eta_2-k\eta_1\chi^+\right)}{2k^2}\min\left\{k, \ \ \ \frac{4}{k^2\eta_1\eta_2+2\left(4-k^2\eta_1\eta_2-k\eta_1\chi^+\right)}\right\}.
\ee
A direct consequence of \eqref{f-decay-pe} and \eqref{F-def-con-pe} is
\be\label{uw-ln-exp}
\eta_2\int_\Omega U\ln U+ \eta_1\int_\Omega W\ln W \leq \eta_1\mathcal{G}(0) e^{-\mu t}, \ \ \  t\in [ 0, T_m).
\ee
 \end{lemma}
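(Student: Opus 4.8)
The plan is to deal with the identity \eqref{F-diif-con-pe} first and then the decay statement. The identity I would get at once by forming the linear combination $\frac{\eta_2}{\eta_1}\cdot\eqref{UlnU-id-pe}+\eqref{WlnW-id-pe}$ and reading off $\mathcal{G}'(t)$ from the definition \eqref{F-def-con-pe}; the only thing to observe is that the two coupling contributions combine into $\frac{\eta_2}{\eta_1}\eta_1+\eta_2=2\eta_2$ in front of $\int_\Omega(U-1)(W-1)$.

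For the exponential decay, the idea is to bound the whole right-hand side of \eqref{F-diif-con-pe} by the dissipation $\frac{4\eta_2}{\eta_1}\|\nabla U^{1/2}\|_{L^2}^2+4\|\nabla W^{1/2}\|_{L^2}^2$ with a strictly positive margin, and then to convert that margin into decay of $\mathcal{G}$ via the Poincar\'{e}-type inequality \eqref{UW-l2} and an entropy bound. First I would read off from \eqref{VZ-l2-by} the bounds $\|V\|_{L^2}\le\eta_1\|W-1\|_{L^2}$ and $\|Z\|_{L^2}\le\eta_2\|U-1\|_{L^2}$; then, by Cauchy--Schwarz, the three terms $2\eta_2\int_\Omega(U-1)(W-1)$, $-\frac{\eta_2}{\eta_1}\int_\Omega(U-1)V$ and $-\int_\Omega(W-1)Z$ together are $\le 4\eta_2\|U-1\|_{L^2}\|W-1\|_{L^2}$. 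For the two remaining terms I would exploit the elliptic $V$-equation: multiplying it by $V$ gives $\int_\Omega(W-1)V=\frac{1}{\eta_1}\big(\|V\|_{L^2}^2+\|\nabla V\|_{L^2}^2\big)$, so together with \eqref{VZ-l2-by} one obtains $0\le\eta_1\int_\Omega(W-1)^2-\int_\Omega(W-1)V\le\eta_1\|W-1\|_{L^2}^2$, and hence $\eta_1\chi\int_\Omega(W-1)^2-\chi\int_\Omega(W-1)V\le\chi^+\eta_1\|W-1\|_{L^2}^2$; in particular the repulsive case $\chi<0$ contributes nothing, which is the damping-by-repulsion effect anticipated in Remark \ref{thm-rem}.

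Writing $a=\|\nabla U^{1/2}\|_{L^2}^2$, $b=\|\nabla W^{1/2}\|_{L^2}^2$ and inserting \eqref{UW-l2} (i.e.\ $\|U-1\|_{L^2}^2\le ka$, $\|W-1\|_{L^2}^2\le kb$), the previous estimates turn \eqref{F-diif-con-pe} into
\[
\mathcal{G}'(t)+\frac{4\eta_2}{\eta_1}\,a+4b\ \le\ 4k\eta_2\sqrt{ab}+k\eta_1\chi^+\,b .
\]
I would then split the cross term with a weighted Young inequality $4k\eta_2\sqrt{ab}\le 2k\eta_2\beta\,a+\frac{2k\eta_2}{\beta}\,b$ and pick $\beta=\frac{2(1-\theta)}{k\eta_1}$, so that the right-hand side is $\le(1-\theta)\big(\frac{4\eta_2}{\eta_1}a+4b\big)$ as soon as $\frac{k^2\eta_1\eta_2}{1-\theta}+k\eta_1\chi^+\le 4(1-\theta)$; since $s\mapsto 4s-\frac{k^2\eta_1\eta_2}{s}$ is increasing and tends to $4-k^2\eta_1\eta_2$ as $s\uparrow 1$, such a $\theta\in(0,1)$ exists exactly under the hypothesis \eqref{eta1-eta2-small-pe}, and optimizing $\beta$ and $\theta$ produces the explicit rate $\mu$ of \eqref{decay-rate1}. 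Finally, from the elementary inequality $s\ln s-s+1\le(s-1)^2$ for $s\ge 0$ and $\bar U=\bar W=1$ by \eqref{UVWZ-int}, one gets $\int_\Omega U\ln U\le\|U-1\|_{L^2}^2\le ka$ and $\int_\Omega W\ln W\le kb$, hence $\mathcal{G}(t)\le\frac{k}{4}\big(\frac{4\eta_2}{\eta_1}a+4b\big)$; combining this with the positive margin yields $\mathcal{G}'(t)+\mu\,\mathcal{G}(t)\le 0$, so Gronwall's lemma gives \eqref{f-decay-pe}. The lower bound $\mathcal{G}\ge 0$ is Jensen's inequality for the convex map $s\mapsto s\ln s$, and \eqref{uw-ln-exp} is immediate from $\eta_1\mathcal{G}=\eta_2\int_\Omega U\ln U+\eta_1\int_\Omega W\ln W$.

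I expect the $\chi$-terms to be the main obstacle: getting the sharp coefficient $\chi^+$ (rather than $|\chi|$) forces one to use the elliptic $V$-equation rather than a bare Cauchy--Schwarz bound, and afterwards the absorption has to be organized through a correctly weighted Young splitting --- not the naive $\sqrt{ab}\le\frac{a+b}{2}$ --- in order to land precisely on the threshold \eqref{eta1-eta2-small-pe} and on the decay rate in \eqref{decay-rate1}.
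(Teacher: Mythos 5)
Your derivation of the identity \eqref{F-diif-con-pe} is exactly the paper's: form $\frac{\eta_2}{\eta_1}$ times \eqref{UlnU-id-pe} plus \eqref{WlnW-id-pe}, with the two coupling contributions merging into the coefficient $2\eta_2$. For the decay your argument is correct in substance and follows the same overall strategy (absorb the right-hand side of \eqref{F-diif-con-pe} into the dissipation using \eqref{VZ-l2-by} and \eqref{UW-l2}, then compare entropy with dissipation via $\int_\Omega U\ln U\le k\|\nabla U^{1/2}\|_{L^2}^2$), but the bookkeeping differs: the paper runs three separate Young inequalities with parameters $\epsilon_1,\epsilon_2,\epsilon_3$ fixed in \eqref{epsiloni-choices} and keeps two independent margins $A,\hat A$ as in \eqref{AB-choices}, whereas you collapse the three coupling terms into the single bound $4\eta_2\|U-1\|_{L^2}\|W-1\|_{L^2}$ and absorb with one weighted Young inequality; this is cleaner and makes the sharpness of the threshold \eqref{eta1-eta2-small-pe} transparent. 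Your treatment of the $\chi$-terms through the identity $\int_\Omega(W-1)V=\frac{1}{\eta_1}(\|V\|_{L^2}^2+\|\nabla V\|_{L^2}^2)\ge 0$ is equivalent to the paper's splitting $\chi=\chi^+-\chi^-$ and lands on the same effective coefficient $\eta_1\chi^+$.

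The one genuine inaccuracy is the claim that optimizing $\beta$ and $\theta$ ``produces the explicit rate $\mu$ of \eqref{decay-rate1}.'' It does not. Your balanced absorption yields the rate $\frac{4(1-s_*)}{k}$, where $s_*$ is the positive root of $4s^2-k\eta_1\chi^+s-k^2\eta_1\eta_2=0$, and this in general differs from $\mu$: for $k=1$, $\chi^+=0$, $\eta_1\eta_2=1$ your rate is $2$ while $\mu=6/7$, because the paper's $\epsilon_i$ are merely convenient, not optimal. Since your rate turns out to be the larger one, \eqref{f-decay-pe} as stated still follows, but only after one verifies the domination $\frac{4(1-s_*)}{k}\ge\frac{1}{2k}(4-k^2\eta_1\eta_2-k\eta_1\chi^+)\ge\mu$; the first inequality reduces, after rearranging and squaring, to $(k\eta_1\chi^+)^2\le(4-k^2\eta_1\eta_2)^2$, which is exactly \eqref{eta1-eta2-small-pe}, and the second is immediate from \eqref{decay-rate1}. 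That verification must be written out: as it stands your proof establishes exponential decay at an explicit positive rate, but does not by itself land on the constant the lemma asserts. Everything else (the lower bound $\mathcal{G}\ge 0$, the entropy comparison, and the passage to \eqref{uw-ln-exp}) matches the paper.
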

\begin{proof}A simple linear combination from \eqref{UlnU-id-pe}, \eqref{WlnW-id-pe} and \eqref{VZ-l2-by} entails \eqref{F-diif-con-pe}.

Next, since $s\ln s\geq s-1$ for any $s>0$, upon integration,  we obtain from the facts   $\bar U=\bar W=1$ that
\be\label{f nonnegative}
\int_{\Omega}U\ln U \geq \int_{\Omega} \left(U-1\right)=0, \quad \quad \int_{\Omega}W\ln W \geq \int_{\Omega} \left(W-1\right)=0.
\ee
which together with the definition of $\mathcal{G}$ in \eqref{F-def-con-pe} immediately shows $\mathcal{G}\geq 0$.

Now,  employing repeatedly Young's inequality  with epsilon  and \eqref{VZ-l2-by}, for $\epsilon_i>0$ to be fixed as \eqref{epsiloni-choices}, we bound the right-hand of \eqref{F-diif-con-pe} term by term as
\begin{equation*}
\begin{cases}
&2\eta_2\int_\Omega(U-1)(W-1)\leq \epsilon_1\int_\Omega(U-1)^2+\frac{\eta_2^2}{\epsilon_1}\int_\Omega(W-1)^2,\\[0.2cm]
&-\frac{\eta_2}{\eta_1}\int_{\Omega}(U-1)V\leq \epsilon_2\int_{\Omega}(U-1)^2+\frac{\eta_2^2}{4\eta_1^2\epsilon_2}\int_{\Omega} V^2,\\[0.2cm]
&\quad \quad \quad \quad \quad \quad \quad \leq \epsilon_2\int_{\Omega}(U-1)^2+\frac{\eta_2^2}{4\epsilon_2}\int_{\Omega} (W-1)^2
\end{cases}
\end{equation*}
and further
\begin{equation*}
\begin{cases}
&-\int_{\Omega}(W-1)Z\leq \frac{\epsilon_3}{\eta_2^2}\int_{\Omega} Z^2+\frac{\eta_2^2}{4\epsilon_3}\int_{\Omega}(W-1)^2,\\[0.2cm]
&\quad \quad \quad \quad \quad \quad \quad\leq \epsilon_3\int_{\Omega} (U-1)^2+\frac{\eta_2^2}{4\epsilon_3}\int_{\Omega}(W-1)^2,\\[0.2cm]
&-\chi\int_{\Omega}(W-1)V\leq\frac{\eta_1\chi^-}{2}\int_{\Omega}(W-1)^2+\frac{\chi^-}{2\eta_1}\int_{\Omega} V^2
-\frac{\chi^+}{\eta_1}
\int_{\Omega} V^2 \\[0.2cm]
&\quad \quad \quad \quad \quad\quad \leq\frac{\eta_1\chi^-}{2}\int_{\Omega}(W-1)^2+\left(\frac{\chi^-}{2\eta_1}-\frac{\chi^+}{\eta_1}\right)^+
\eta_1^2\int_{\Omega} (W-1)^2\\
&\quad \quad \quad \quad \quad\quad =\eta_1\chi^-\int_{\Omega} (W-1)^2.
\end{cases}
\end{equation*}
Combining these inequalities  with  \eqref{UW-l2}, we bound \eqref{F-diif-con-pe} as follows:
\begin{equation}\label{F-diif-con-pe0}
\begin{split}
&\mathcal{G}'(t)+\frac{4\eta_2}{\eta_1}\int_\Omega|\nabla U^{\frac{1}{2}}|^2+4\int_{\Omega}|\nabla W^{\frac{1}{2}}|^2\\
&\leq \left(\epsilon_1+\epsilon_2+\epsilon_3\right)\int_\Omega (U-1)^2 +\left(\frac{\eta_2^2}{\epsilon_1}+\frac{\eta_2^2}{4\epsilon_2}
+\frac{\eta_2^2}{4\epsilon_3}+\eta_1\chi^+\right)\int_\Omega (W-1)^2\\
&\leq \left(\epsilon_1+\epsilon_2+\epsilon_3\right)k\int_\Omega|\nabla U^{\frac{1}{2}}|^2+\left(\frac{\eta_2^2}{\epsilon_1}+\frac{\eta_2^2}{4\epsilon_2}
+\frac{\eta_2^2}{4\epsilon_3}+\eta_1\chi^+\right)k\int_\Omega|\nabla W^{\frac{1}{2}}|^2.
\end{split}
\end{equation}
Thanks to \eqref{eta1-eta2-small-pe}, we now  fix, for instance,
\be\label{epsiloni-choices}
 \epsilon_1=\frac{2\eta_2}{k\eta_1}, \quad\quad
 \epsilon_2=\frac{\eta_2}{k\eta_1},  \quad\quad
 \epsilon_3=\left[\frac{k\eta_1}{\eta_2}
 +\frac{2(4-k^2\eta_1\eta_2-k\eta_1\chi^+)}{k\eta_2^2}\right]^{-1}
\ee
so that
\be\label{AB-choices}
\begin{cases}
A&:=\frac{4\eta_2}{\eta_1}-\left(\epsilon_1+\epsilon_2+\epsilon_3\right)k\\[0.2cm]
&=\frac{2\eta_2\left(4-k^2\eta_1\eta_2-k\eta_1\chi^+\right)}{k\eta_1
\left[k^2\eta_1\eta_2+2\left(4-k^2\eta_1\eta_2-k\eta_1\chi^+\right)\right]}>0, \\[0.25cm]
\hat{A}&:=4-\left(\frac{\eta_2^2}{\epsilon_1}+\frac{\eta_2^2}{4\epsilon_2}
+\frac{\eta_2^2}{4\epsilon_3}+\eta_1\chi^+\right)k\\[0.2cm]
&=\frac{1}{2}\left(4-k^2\eta_1\eta_2-k\eta_1\chi^+\right)>0.
\end{cases}
\ee
Next, by the algebraic  inequality
$$
s\ln s \leq s-1+(s-1)^2, \quad \quad \forall s>0,
$$
upon directly integrating and using \eqref{UW-l2} and the facts that $\bar U=\bar W=1$, we find
\begin{equation}\label{UlnU+WlnW-by UWl2}
\begin{cases}
&\int_\Omega U\ln U\leq\int_\Omega(U-1)+\int_\Omega\left(U-1\right)^2=\int_\Omega \left(U-1\right)^2\leq k\int_\Omega |\nabla U^\frac{1}{2}|^2, \\[0.25cm]
&\int_\Omega W\ln W \leq \int_\Omega \left(W-1\right)^2\leq k\int_\Omega |\nabla W^\frac{1}{2}|^2.
\end{cases}
\end{equation}
Substituting \eqref{UlnU+WlnW-by UWl2} into \eqref{F-diif-con-pe0} and recalling \eqref{F-def-con-pe} and \eqref{AB-choices}, we finally end up with  a key ODI for $\mathcal{G}$ as follows:
 $$
\mathcal{G}'(t)\leq -\min\left\{\frac{A\eta_1}{k\eta_2}, \ \ \frac{\hat{A}}{k}\right\}\mathcal{G}(t), \quad t\in [ 0, T_m),
$$
which along with \eqref{AB-choices} gives rise to \eqref{f-decay-pe} with $\mu$ given by \eqref{decay-rate1}.
\end{proof}
\begin{remark} One can easily check the proof of   Lemma \ref{con-lem-pe} works simply for the  limiting case of  $\eta_2=0$. Thus, by setting $U=Z\equiv0$ formally,   the $L^1$-convergence $W\ln W$ for the minimal  KS model holds under $k\eta_1\chi^+<4$.

If
 $$
k^2\eta_1\eta_2+k\eta_1\chi^+\leq 4,
$$
  it follows easily from the proof of this lemma that  $\|U\ln U\|_{L^1}+\|W\ln W\|_{L^1}$ is uniformly bounded on $(0, T_m)$, and then our Section 3 implies that the solution to \eqref{equal system} or equivalently \eqref{Loop equations}  exists globally in time and is bounded on $\Omega\times (0, \infty)$.
\end{remark}
Next, we explore the convergence property for the case that $\tau_1>0$ and $\tau_2>0$.
For this purpose,  we shall construct a conditional Lyapunov  functional of the form (cf. \cite{GZ98,A19,A20} in other situations):
\be\label{energy function}
\begin{split}
\mathcal{H}(t)=&\frac{\alpha}{k}\int_{\Omega}U\ln U+\frac{\tau_1\alpha}{2}\int_{\Omega}|\nabla V|^2+\frac{\tau_1\alpha}{2}\left
(1+2 \beta +\frac{\gamma_1}{k\eta_1}\right)\int_{\Omega}V^2\\
&+\frac{1}{k}\int_{\Omega}W\ln W+\frac{\tau_2}{2}\int_{\Omega}|\nabla Z|^2+\frac{\tau_2}{2}\left
(1+2 \beta +\frac{\gamma_2}{k\eta_2}\right)\int_{\Omega}Z^2,
\end{split}
\ee
where  $k$ is given in \eqref{UW-l2} and nonnegative $\alpha,\beta, \gamma_i \ (i=1,2)$  will be detailed in \eqref{para-choice}.

\begin{lemma}  The time derivative of   $\mathcal{H}$ defined in \eqref{energy function}  fulfills
\begin{equation}\label{diff-G}
\begin{split}
&\mathcal{H}'(t)+\frac{4\alpha}{k}\int_{\Omega}|\nabla U^{\frac{1}{2}}|^2+\frac{4}{k}\int_{\Omega}|\nabla W^{\frac{1}{2}}|^2+\frac{\gamma_1\alpha}{k\eta_1}\int_{\Omega}\left(V^2+|\nabla V|^2\right)\\
&\ \ \ +\tau^2_1\alpha\int_{\Omega}V_t^2+\frac{\gamma_2}{k\eta_2}\int_{\Omega}\left(Z^2+|\nabla Z|^2\right)+\tau^2_2 \int_{\Omega}Z_t^2\\
& =\frac{(\eta_1\alpha+\eta_2)}{k}\int_{\Omega}(U-1)(W-1)
-\frac{\tau_1\alpha}{k}\int_{\Omega}(U-1)V_t\\
& \ \  -\frac{\alpha}{k}\int_{\Omega}(U-1)V-\frac{\tau_2}{k}\int_{\Omega}(W-1)Z_t
-\frac{1}{k}\int_{\Omega}(W-1)Z\\
& \ \ +\frac{\eta_1\chi}{k}\int_{\Omega}(W-1)^2 +\tau_2\eta_2\int_{\Omega}(U-1)Z_t
+\frac{\gamma_2}{k}\int_{\Omega}(U-1)Z\\
&\ \ \ +\left(\eta_1\alpha-\frac{\chi}{k}\right)\tau_1\int_{\Omega}(W-1)V_t
+\frac{(\gamma_1\alpha-\chi)}{k}\int_{\Omega}(W-1)V\\
&\ \ \ +2 \beta \alpha\tau_1\int_\Omega VV_t+2 \beta \tau_2\int_\Omega ZZ_t, \ \ t\in [0, T_m).
\end{split}
\end{equation}
 \end{lemma}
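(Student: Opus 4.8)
The statement is a pointwise-in-time identity, so the plan is to differentiate $\mathcal{H}$ from \eqref{energy function} term by term along the solution of the transformed system \eqref{equal system} and collect. First I would treat the two entropy terms by testing the first and third equations of \eqref{equal system} with $\ln U$ and $\ln W$; integration by parts over $\Omega$ loses no boundary contribution because of the homogeneous Neumann conditions $\partial U/\partial\nu=\partial V/\partial\nu=\partial W/\partial\nu=\partial Z/\partial\nu=0$, and $\int_\Omega U^{-1}|\nabla U|^2=4\int_\Omega|\nabla U^{1/2}|^2$ (similarly for $W$). The surviving cross terms $\int_\Omega\nabla U\cdot\nabla V$, $\int_\Omega\nabla W\cdot\nabla Z$ and $\chi\int_\Omega\nabla W\cdot\nabla V$ I rewrite as $-\int_\Omega U\Delta V$, $-\int_\Omega W\Delta Z$, $-\chi\int_\Omega W\Delta V$ and substitute $\Delta V=\tau_1V_t+V-\eta_1(W-1)$ and $\Delta Z=\tau_2Z_t+Z-\eta_2(U-1)$ read off from the second and fourth equations. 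Throughout, \eqref{UVWZ-int} gives $\bar U=\bar W=1$, $\bar V=\bar Z=0$, hence $\int_\Omega V_t=\frac{d}{dt}\int_\Omega V=0$, $\int_\Omega Z_t=0$, $\int_\Omega(U-1)=\int_\Omega(W-1)=0$, which lets me replace every $\int_\Omega UV_t$ by $\int_\Omega(U-1)V_t$, every $\int_\Omega WV$ by $\int_\Omega(W-1)V$, every $\int_\Omega U(W-1)$ by $\int_\Omega(U-1)(W-1)$, and so on.

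Next I would differentiate the Dirichlet and $L^2$ pieces, using $\frac{d}{dt}\frac12\int_\Omega|\nabla V|^2=-\int_\Omega V_t\Delta V=-\tau_1\int_\Omega V_t^2-\int_\Omega VV_t+\eta_1\int_\Omega(W-1)V_t$ together with its $Z$-analogue and $\frac{d}{dt}\frac12\int_\Omega V^2=\int_\Omega VV_t$, $\frac{d}{dt}\frac12\int_\Omega Z^2=\int_\Omega ZZ_t$. This produces the dissipation $\tau_1^2\alpha\int_\Omega V_t^2$, $\tau_2^2\int_\Omega Z_t^2$ (to be moved to the left of \eqref{diff-G}) and leaves $\int_\Omega VV_t$ with total coefficient $\tau_1\alpha\big(-1+1+2\beta+\frac{\gamma_1}{k\eta_1}\big)=\tau_1\alpha\big(2\beta+\frac{\gamma_1}{k\eta_1}\big)$, and similarly for $\int_\Omega ZZ_t$. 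The one nonroutine point is to re-express exactly the $\frac{\gamma_1}{k\eta_1}$-fraction of $\tau_1\int_\Omega VV_t$ once more through the $V$-equation,
\[
\tau_1\int_\Omega VV_t=\int_\Omega V\big(\Delta V-V+\eta_1(W-1)\big)=-\int_\Omega\big(|\nabla V|^2+V^2\big)+\eta_1\int_\Omega(W-1)V,
\]
which generates the dissipation term $\frac{\gamma_1\alpha}{k\eta_1}\int_\Omega(V^2+|\nabla V|^2)$ on the left of \eqref{diff-G}, the contribution $\frac{\gamma_1\alpha}{k}\int_\Omega(W-1)V$ on the right, and leaves only $2\beta\alpha\tau_1\int_\Omega VV_t$; the identical reduction applied to the $\frac{\gamma_2}{k\eta_2}$-fraction of $\tau_2\int_\Omega ZZ_t$ yields $\frac{\gamma_2}{k\eta_2}\int_\Omega(Z^2+|\nabla Z|^2)$, $\frac{\gamma_2}{k}\int_\Omega(U-1)Z$ and the leftover $2\beta\tau_2\int_\Omega ZZ_t$.

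It then remains to move the manifestly dissipative quantities $\frac{4\alpha}{k}\int_\Omega|\nabla U^{1/2}|^2$, $\frac{4}{k}\int_\Omega|\nabla W^{1/2}|^2$, $\tau_1^2\alpha\int_\Omega V_t^2$, $\tau_2^2\int_\Omega Z_t^2$, $\frac{\gamma_1\alpha}{k\eta_1}\int_\Omega(V^2+|\nabla V|^2)$, $\frac{\gamma_2}{k\eta_2}\int_\Omega(Z^2+|\nabla Z|^2)$ to the left and match coefficients of the remaining bilinear terms against \eqref{diff-G}. Only three combinations get contributions from two sources — the $(U-1)(W-1)$ terms (from the two entropies, summing to $\frac{\eta_1\alpha+\eta_2}{k}$), the $(W-1)V_t$ terms ($\tau_1\alpha\eta_1$ from the $V$-Dirichlet term plus $-\frac{\chi\tau_1}{k}$ from the $W$-entropy, giving $(\eta_1\alpha-\frac{\chi}{k})\tau_1$), and the $(W-1)V$ terms ($-\frac{\chi}{k}$ from the $W$-entropy plus $\frac{\gamma_1\alpha}{k}$ from the reduction above, giving $\frac{\gamma_1\alpha-\chi}{k}$) — and all other terms carry over from a single source. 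The only real difficulty here is organizational bookkeeping: correctly tracking which mean-zero linear terms vanish and performing the $\gamma_i$-fraction substitution on exactly the right multiple so the stated dissipation coefficients emerge; there is no analytic subtlety, and the identity holds for every choice of the nonnegative constants $\alpha,\beta,\gamma_1,\gamma_2$.
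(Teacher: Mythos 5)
Your proposal is correct and follows essentially the same route as the paper: test the two entropy pieces with $\ln U$ and $\ln W$, substitute $\Delta V=\tau_1V_t+V-\eta_1(W-1)$ and $\Delta Z=\tau_2Z_t+Z-\eta_2(U-1)$ into the resulting cross terms, differentiate the Dirichlet and $L^2$ pieces via $-\int_\Omega V_t\Delta V$, and convert exactly the $\tfrac{\gamma_1}{k\eta_1}$- and $\tfrac{\gamma_2}{k\eta_2}$-fractions of $\tau_1\int_\Omega VV_t$ and $\tau_2\int_\Omega ZZ_t$ through the chemical equations to produce the $\tfrac{\gamma_1\alpha}{k\eta_1}\int_\Omega(V^2+|\nabla V|^2)$ and $\tfrac{\gamma_2}{k\eta_2}\int_\Omega(Z^2+|\nabla Z|^2)$ dissipation, which is precisely the paper's decomposition \eqref{UlnU}--\eqref{Z-odi-pp}. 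All coefficient matchings you list (including the two-source terms $(U-1)(W-1)$, $(W-1)V_t$ and $(W-1)V$) agree with \eqref{diff-G}.
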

\begin{proof}
By \eqref{equal system}  and \eqref{energy function}, we see that the differentiation of  $\mathcal{H}$ solves
\begin{equation}\label{writting form}
\begin{split}
\mathcal{H}'(t)=&\frac{\alpha}{k}\int_{\Omega}U_t\ln U+\tau_1\alpha\int_{\Omega}\nabla V_t\cdot\nabla V+\tau_1\alpha\left(1+2 \beta +\frac{\gamma_1}{k\eta_1}\right)\int_{\Omega}V_t V\\
&+\frac{1}{k}\int_{\Omega}W_t\ln W+\tau_2\int_{\Omega}\nabla Z_t\cdot\nabla Z+\tau_2\left(1+2 \beta +\frac{\gamma_2}{k\eta_2}\right)\int_{\Omega}Z_t Z.
\end{split}
\end{equation}
For the terms $\frac{\alpha}{k}\int_{\Omega}U_t\ln U$ and $\frac{1}{k}\int_{\Omega}W_t\ln W$, testing the first equation of \eqref{equal system} by $\frac{1}{k}\ln U$ and the third equation by  $\frac{1}{k}\ln W$, we conclude that
\begin{equation}\label{UlnU}
\begin{split}
&\frac{\alpha}{k}\int_{\Omega}U_t\ln U+\frac{4\alpha}{k}\int_{\Omega}|\nabla U^{\frac{1}{2}}|^2\\
&=-\frac{\alpha}{k}\int_{\Omega} (U-1)\cdot\Delta V\\
&=\frac{\eta_1\alpha}{k}\int_{\Omega}(U-1)(W-1)-\frac{\tau_1\alpha}{k}\int_{\Omega}(U-1)V_t
-\frac{\alpha}{k}\int_{\Omega}(U-1)V
\end{split}
\end{equation}
and, similarly,
\begin{equation} \label{WlnW-id}
\begin{split}
&\frac{}{k}\frac{d}{dt}\int_\Omega W\ln W+\frac{4}{k}\int_{\Omega}|\nabla W^{\frac{1}{2}}|^2\\
&=\frac{\eta_2}{k}\int_{\Omega}(U-1)(W-1)-\frac{\tau_2}{k}\int_\Omega(W-1)Z_t-\frac{1}{k}\int_{\Omega}(W-1)Z\\
&\ \ +\frac{\eta_1\chi}{k}\int_{\Omega}(W-1)^2-\frac{\tau_1\chi}{k}\int_\Omega(W-1)V_t-\frac{ \chi}{k}\int_{\Omega}(W-1)V.
\end{split}
\end{equation}
As to the second and third terms in \eqref{writting form},  we  use the second equation in \eqref{equal system}  to calculate  that
\begin{equation}\label{gradV-odi-pp}
\tau_1\alpha\int_{\Omega}\nabla V_t\cdot\nabla V=\tau_1\eta_1\alpha\int_{\Omega}\left(W-1\right)V_t-\tau^2_1\alpha\int_{\Omega} V_t^2- \tau_1\alpha\int_{\Omega}V_t V
\end{equation}
and that
\be\label{V-odi-pp}
\begin{split}
&\tau_1\alpha\left(1+2 \beta +\frac{\gamma_1}{k\eta_1}\right)\int_{\Omega}V_t V\\
&=(1+2 \beta )\tau_1\alpha\int_{\Omega}V_t V+\frac{\tau_1\gamma_1\alpha}{k\eta_1}\int_{\Omega}V_t V\\
&=(1+2 \beta )\tau_1\alpha\int_{\Omega}V_t V+\frac{\gamma_1\alpha}{k\eta_1}\int_{\Omega}\left[\Delta V- V+\eta_1\left(W-1\right)\right] V\\
&=(1+2 \beta )\tau_1\alpha\int_{\Omega}V_t V-\frac{\gamma_1\alpha}{k\eta_1}\int_{\Omega}\left(V^2+|\nabla V|^2\right)+\frac{\gamma_1\alpha}{k}\int_{\Omega}\left(W-1\right)V.
\end{split}
\ee
 In the same reasoning, we find that
\begin{equation}\label{GradZ-odi-pp}
\begin{split}
\tau_2\int_{\Omega}\nabla Z_t\cdot\nabla Z
&=\tau_2\eta_2\int_{\Omega}\left(U-1\right)Z_t-\tau^2_2\int_{\Omega} Z_t^2- \tau_2\int_{\Omega}Z_t Z
\end{split}
\end{equation}
and that
\begin{equation}\label{Z-odi-pp}
\begin{split}
&\tau_2\left(1+2 \beta +\frac{\gamma_2}{k\eta_2}\right)\int_{\Omega}Z_t Z\\
&=(1+2 \beta )\tau_2\int_{\Omega}Z_t Z-\frac{\gamma_2}{k\eta_2}\int_{\Omega}\left(Z^2+|\nabla Z|^2\right)+\frac{\gamma_2}{k}\int_{\Omega}\left(U-1\right)Z.
\end{split}
\end{equation}
Substituting \eqref{UlnU}, \eqref{WlnW-id}, \eqref{gradV-odi-pp}, \eqref{V-odi-pp},\eqref{GradZ-odi-pp}, \eqref{Z-odi-pp} into \eqref{writting form}, upon rearrangement, we finally accomplish  our stated dissipation identity \eqref{diff-G}.\end{proof}

Based on  \eqref{diff-G}, the function    $\mathcal{H}$ will decay  exponentially   under a smallness assumption on the product of initial masses, as provided below.
\begin{lemma} \label{con-lem-pp}
Let $\tau_1, \tau_2>0$,  $\chi\in\mathbb{R}$ and $\eta_1,\eta_2$ being from \eqref{trans-id} satisfy
 \begin{equation}\label{decay-rate-con-pp}
\begin{split}
\frac{2-\sqrt{22}}{3}<k\eta_1\chi<\sqrt{2}, \ \ \ k^2\eta_1\eta_2<\frac{2\sqrt{2}}{3}\min\left\{1, \ \ \frac{3}{2}+k\eta_1\chi\right\}.
\end{split}
\end{equation}
Then, for the specifications
\be\label{para-choice}
\begin{cases}
\alpha=\frac{1}{2}\left(\max\left\{\frac{\chi^2}{2}, \ \ \frac{\eta_2}{\sqrt{2}\eta_1}\right\}+\frac{3-2k\eta_1\chi^-}{3k^2\eta_1^2}\right), \\[0.25cm]
\beta =\frac{1-k^2\eta_1^2 \alpha }{k^2\eta_1^2 \alpha }, \ \ \gamma_1=\frac{k\eta_1\chi+1}{k\eta_1 \alpha }, \  \ \gamma_2=\frac{ \alpha }{k\eta_2},
\end{cases}
\ee
the function $\mathcal{H}$ defined in \eqref{energy function} decays exponentially according to
\be\label{decay property}
0\leq \mathcal{H}(t)\leq \mathcal{H}(0)e^{-\delta t}, \quad t\in [ 0, T_m).
\ee
Here, the rate $\delta=\delta(\eta_1,\eta_2, \tau_1,\tau_2,\chi, k)$ can be made precise as in \eqref{decay-rate} below.

A direct consequence from \eqref{decay property} and \eqref{energy function} follows: for some $C>0$,
\be\label{ulnu-wlnw-gradv-z-con}
\|U\ln U\|_{L^1}+\|W\ln W\|_{L^1}+\|V\|_{H^1}^2+\|Z\|_{H^1}^2\leq Ce^{-\delta t}, \quad \forall t\geq 0.
\ee
 \end{lemma}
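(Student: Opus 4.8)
The strategy is the standard Lyapunov-functional argument applied to the dissipation identity \eqref{diff-G}: first verify that $\mathcal{H}\geq 0$, then absorb the whole right-hand side of \eqref{diff-G} into the dissipative terms on its left while keeping a strictly positive surplus, and finally dominate $\mathcal{H}$ itself by that surplus to reach an ODI $\mathcal{H}'(t)\leq-\delta\,\mathcal{H}(t)$, which integrates to \eqref{decay property}. Nonnegativity is immediate: $s\ln s\geq s-1$ together with $\bar U=\bar W=1$ from \eqref{UVWZ-int} gives $\int_\Omega U\ln U\geq 0$ and $\int_\Omega W\ln W\geq 0$ exactly as in \eqref{f nonnegative}, while under \eqref{para-choice} one has $1+2\beta+\gamma_i/(k\eta_i)>0$ (indeed $\beta\geq 0$ once $k^2\eta_1^2\alpha\leq 1$, which \eqref{decay-rate-con-pp} guarantees), so every remaining term in \eqref{energy function} is a nonnegative multiple of a squared $L^2$-norm and hence $\mathcal{H}\geq 0$.

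The heart of the matter is the estimate of the right-hand side of \eqref{diff-G}. I would treat each of its terms by Young's inequality with carefully tuned weights. The purely spatial products $\int_\Omega(U-1)(W-1)$, $\int_\Omega(U-1)V$, $\int_\Omega(W-1)Z$, $\int_\Omega(U-1)Z$, $\int_\Omega(W-1)V$ and the quadratic term $\int_\Omega(W-1)^2$ are converted into multiples of $\int_\Omega(U-1)^2$, $\int_\Omega(W-1)^2$, $\int_\Omega V^2$ and $\int_\Omega Z^2$; the time-coupled products $\int_\Omega(U-1)V_t$, $\int_\Omega(W-1)Z_t$, $\int_\Omega(U-1)Z_t$, $\int_\Omega(W-1)V_t$, $\int_\Omega VV_t$ and $\int_\Omega ZZ_t$ are split so that their $V_t^2$ and $Z_t^2$ parts are absorbed by $\tau_1^2\alpha\int_\Omega V_t^2$ and $\tau_2^2\int_\Omega Z_t^2$ on the left, the remaining parts feeding back into the same four quadratic quantities. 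Invoking $\int_\Omega(U-1)^2\leq k\int_\Omega|\nabla U^{1/2}|^2$ and $\int_\Omega(W-1)^2\leq k\int_\Omega|\nabla W^{1/2}|^2$ from \eqref{UW-l2}, the right-hand side is thus bounded by multiples of $\int_\Omega|\nabla U^{1/2}|^2$, $\int_\Omega|\nabla W^{1/2}|^2$, $\int_\Omega(V^2+|\nabla V|^2)$ and $\int_\Omega(Z^2+|\nabla Z|^2)$. The algebraic identities in \eqref{para-choice} are engineered precisely so that, under \eqref{decay-rate-con-pp}, the net coefficients of these four dissipative quantities stay strictly positive: the constraint $k^2\eta_1^2\alpha\leq 1$ (forcing $\beta\geq 0$) controls the $\int_\Omega VV_t$, $\int_\Omega ZZ_t$ feedback, the two-sided bound $\tfrac{2-\sqrt{22}}{3}<k\eta_1\chi<\sqrt{2}$ keeps $\gamma_1>0$ and tames the sign contribution of the $\chi$-terms, and the smallness of $k^2\eta_1\eta_2$ supplies the diagonal dominance needed to close the resulting quadratic form in $(\int_\Omega(U-1)^2,\int_\Omega(W-1)^2,\int_\Omega V^2,\int_\Omega Z^2)$. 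Finally, $s\ln s\leq s-1+(s-1)^2$ and \eqref{UW-l2} give $\int_\Omega U\ln U\leq k\int_\Omega|\nabla U^{1/2}|^2$ and $\int_\Omega W\ln W\leq k\int_\Omega|\nabla W^{1/2}|^2$ as in \eqref{UlnU+WlnW-by UWl2}, while trivially $\int_\Omega V^2\leq\int_\Omega(V^2+|\nabla V|^2)$ and likewise for $Z$; matching these against the surplus dissipation gives $\mathcal{H}'(t)\leq-\delta\,\mathcal{H}(t)$ with $\delta$ the minimum of the surviving ratios, precisely the number displayed in \eqref{decay-rate}, and integrating proves \eqref{decay property}.

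The consequence \eqref{ulnu-wlnw-gradv-z-con} then follows at once, since each summand of $\mathcal{H}$ in \eqref{energy function} is a fixed positive multiple of one of $\|U\ln U\|_{L^1}$, $\|W\ln W\|_{L^1}$, $\|V\|_{H^1}^2$, $\|Z\|_{H^1}^2$ and each is $\geq 0$, so $\mathcal{H}$ bounds all four from above; hence $\mathcal{H}(t)\leq\mathcal{H}(0)e^{-\delta t}$ passes to each of them with a constant $C$ depending only on $\mathcal{H}(0)$ and the fixed parameters. I expect the only genuine obstacle to be the bookkeeping: verifying that the particular weights behind \eqref{para-choice} render all four leftover coefficients positive simultaneously, i.e. checking the definiteness of the quadratic form in $(\int_\Omega(U-1)^2,\int_\Omega(W-1)^2,\int_\Omega V^2,\int_\Omega Z^2,\int_\Omega V_t^2,\int_\Omega Z_t^2)$ under \eqref{decay-rate-con-pp}. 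There is no conceptual difficulty, only the careful tracking of the dozen Young splittings and the algebra that reduces positivity to the stated two-line hypothesis.
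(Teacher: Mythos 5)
Your plan is essentially the paper's proof: nonnegativity of $\mathcal{H}$ from $s\ln s\geq s-1$ and positivity of $\alpha,\beta,\gamma_1,\gamma_2$, a battery of Young splittings turning the right side of \eqref{diff-G} into a negative-definite quadratic form in $\int_\Omega(U-1)^2$, $\int_\Omega(W-1)^2$, $\int_\Omega V^2$, $\int_\Omega Z^2$ via \eqref{UW-l2}, and then $s\ln s\leq s-1+(s-1)^2$ to close the ODI $\mathcal{H}'\leq-\delta\mathcal{H}$ with $\delta$ the minimum of the surviving ratios. One concrete warning on the bookkeeping you defer: the paper does \emph{not} handle $2\beta\alpha\tau_1\int_\Omega VV_t$ and $2\beta\tau_2\int_\Omega ZZ_t$ by Young's inequality as you propose. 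The entire $\tau_1^2\alpha\int_\Omega V_t^2$ budget on the left of \eqref{diff-G} is already consumed (in two equal halves) by the splittings of $-\frac{\tau_1\alpha}{k}\int_\Omega(U-1)V_t$ and $(\eta_1\alpha-\frac{\chi}{k})\tau_1\int_\Omega(W-1)V_t$, so a further Young split of $\int_\Omega VV_t$ would either overdraw that budget or, for small $\epsilon$, dump a large $\epsilon^{-1}\int_\Omega V^2$ term that can destroy the positivity of the tightly tuned coefficient $A_2$ in \eqref{odi-requirement2}. The paper instead tests the $V$-equation with $V$ to get $2\tau_1\int_\Omega VV_t+2\int_\Omega|\nabla V|^2+\int_\Omega V^2\leq\eta_1^2\int_\Omega(W-1)^2$ (and likewise for $Z$), which removes these terms at no cost in $V_t^2$ and even contributes extra dissipation; with that substitution your plan matches the paper's argument and the stated rate \eqref{decay-rate}.
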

\begin{proof}
Thanks to \eqref{decay-rate-con-pp},  we first find that
\be\label{odi-requirement3}
\begin{cases}
 \max\left\{\frac{\chi^2}{2}, \ \ \frac{\eta_2}{\sqrt{2}\eta_1}\right\}<\alpha<\frac{3+2k\eta_1\chi}{3k^2\eta_1^2},\\[0.2cm]
 \eta_1^2\alpha^2-3k^2 \eta_1^2 \eta_2^2\alpha+2\eta_2^2>0,\\[0.2cm]
 0< \alpha <\frac{1}{k^2\eta_1^2},
\end{cases}
\ee
which entails that $ \alpha, \beta, \gamma_1, \gamma_2$ defined in \eqref{para-choice} are positive, and so the function $\mathcal{H}$ is nonnegative by \eqref{f nonnegative} and \eqref{energy function}; moreover, along with  \eqref{para-choice}, \eqref{odi-requirement3} implies that
$$
\begin{cases}
&(1+ \beta )k^2\eta_2^2 <2 \alpha, \\[0.2cm]
& \alpha ^2\gamma_1^2-2\left(\chi+\frac{1}{k\eta_1}\right) \alpha \gamma_1+(1-2 \beta ) \alpha
+\chi^2<0,\\[0.2cm]
&2(1+ \beta )k^2\eta_1^2 \alpha ^2-4 \alpha  +\chi^2<0,\\[0.2cm]
&k\eta_2\gamma_2^2-2 \alpha \gamma_2+(1-2 \beta )k \alpha \eta_2<0.
\end{cases}
$$
In light of this,  \eqref{para-choice}  and \eqref{odi-requirement3}, we further compute that
\be\label{odi-requirement2}
\begin{cases}
&A_1:=-\left[(1+ \beta )\eta_2^2-\frac{2\alpha}{k^2}\right]=\frac{2}{k^2\alpha}(\alpha-\frac{\eta_2}{\sqrt{2}\eta_1})(\alpha+\frac{\eta_2}{\sqrt{2}\eta_1})>0,\\[0.2cm]
&A_2:=-\frac{1}{2}\left[(1-2 \beta )\alpha+(\gamma_1\alpha-\chi)^2-\frac{2\gamma_1\alpha}{k\eta_1} \right] =\frac{3}{2}\left(\frac{3+2k\eta_1\chi}{3k^2\eta_1^2}-\alpha\right)>0, \\[0.2cm]
&A_3:=-\frac{1}{2}\left[2(1+ \beta )\alpha\eta_1^2-\frac{4}{k^2}+\frac{\chi^2}{k^2\alpha}\right]=\frac{1}{k^2\alpha}\left(\alpha-
\frac{\chi^2}{2}\right)>0,\\[0.2cm]
&A_4:=-\frac{1}{2}\left(1-2 \beta  +\frac{\gamma_2^2}{\alpha}-\frac{2\gamma_2}{k\eta_2}\right)=\frac{\left(\eta_1^2\alpha^2-3k^2 \eta_1^2 \eta_2^2\alpha+2\eta_2^2\right)}{2k^2\eta_1^2\eta_2^2\alpha}>0.
\end{cases}
\ee
 With these preparations, we next apply Young's inequality multiple times to bound  the terms on the right-hand side of \eqref{diff-G} as follows:
\be\label{esti-1-pp0}
\begin{cases}
\frac{\eta_1\alpha}{k}\int_{\Omega}(U-1)(W-1)\leq \frac{\alpha}{2k^2}\int_{\Omega}(U-1)^2+ \frac{\eta_1^2\alpha}{2} \int_{\Omega}(W-1)^2,\\[0.2cm]
\frac{\eta_2}{k}\int_{\Omega}(U-1)(W-1)\leq  \frac{\eta_2^2}{2} \int_{\Omega}(U-1)^2+\frac{1}{2k^2}\int_{\Omega}(W-1)^2,\\[0.2cm]
-\frac{\tau_1\alpha}{k}\int_{\Omega}(U-1)V_t\leq\frac{\tau^2_1\alpha}{2}\int_{\Omega}V_t^2
+\frac{\alpha}{2k^2}\int_{\Omega}(U-1)^2
\end{cases}
\ee
and further
\be\label{esti-1-pp}
\begin{cases}
-\frac{\alpha}{k}\int_{\Omega}(U-1)V\leq\frac{\alpha}{2k^2}\int_{\Omega}(U-1)^2
+\frac{\alpha}{2}\int_{\Omega}V^2,\\[0.2cm]
-\frac{\tau_2}{k}\int_{\Omega}(W-1)Z_t\leq \frac{\tau^2_2}{2}\int_{\Omega}Z_t^2+
\frac{1}{2k^2}\int_{\Omega}(W-1)^2,\\[0.2cm]
-\frac{1}{k}\int_{\Omega}(W-1)Z\leq\frac{1}{2k^2}\int_{\Omega}(W-1)^2
+\frac{1}{2} \int_{\Omega}Z^2,\\[0.2cm]
\tau_2\eta_2\int_{\Omega}(U-1)Z_t\leq \frac{\tau_2^2}{2}\int_{\Omega}Z_t^2+\frac{\eta_2^2}{2}\int_{\Omega}(U-1)^2
\end{cases}
\ee
as well as
\be\label{esti-2-pp}
\begin{cases}
\frac{\gamma_2}{k}\int_{\Omega}(U-1)Z\leq\frac{\alpha}{2k^2}\int_{\Omega}(U-1)^2
+ \frac{\gamma_2^2}{2\alpha} \int_{\Omega}Z^2, \\[0.2cm]
\left(\eta_1\alpha-\frac{\chi}{k}\right)\tau_1\int_{\Omega}(W-1)V_t\leq \frac{\tau_1^2\alpha}{2}\int_\Omega V_t^2+\frac{1}{2\alpha}\left(\eta_1\alpha-\frac{\chi}{k}\right)^2\int_{\Omega}(W-1)^2,\\[0.2cm]
\frac{(\gamma_1\alpha-\chi)}{k}\int_{\Omega}(W-1)V\leq \frac{1}{2k^2}\int_\Omega (W-1)^2+\frac{(\gamma_1\alpha-\chi)^2}{2}\int_{\Omega}V^2.
\end{cases}
\ee
Moreover, from the second and fourth equation in \eqref{equal system}, we deduce that
\be\label{VZ-l2-by-pp}
\begin{cases}
2\tau_1\int_{\Omega}VV_t+2\int_{\Omega}|\nabla V|^2+\int_{\Omega}V^2\leq\eta^2_1\int_{\Omega}(W-1)^2,\\[0.25cm]
2\tau_2\int_{\Omega}ZZ_t+2\int_{\Omega}|\nabla Z|^2+\int_{\Omega}Z^2\leq\eta^2_2\int_{\Omega}(U-1)^2.
\end{cases}
\ee
Collecting \eqref{esti-1-pp0}, \eqref{esti-1-pp}, \eqref{esti-2-pp}, \eqref{VZ-l2-by-pp} and \eqref{diff-G}, we obtain that
\begin{equation*}
\begin{split}
&\mathcal{H}'(t)+\frac{4\alpha}{k}\int_\Omega|\nabla U^{\frac{1}{2}}|^2+\frac{4}{k}\int_\Omega|\nabla W^{\frac{1}{2}}|^2\\
&\ \ \ \ \ \  +\left(\frac{\gamma_1}{k\eta_1}+ 2\beta \right)\alpha\int_\Omega|\nabla V|^2+\left(\frac{\gamma_2}{k\eta_2}+ 2\beta \right)\int_\Omega|\nabla Z|^2\\
&\ \ \leq \left[\frac{2\alpha}{k^2}+(1+ \beta ) \eta_2^2\right]\int_\Omega (U-1)^2+\frac{1}{2}\left[(1-2 \beta )\alpha+(\gamma_1\alpha-\chi)^2-\frac{2\gamma_1\alpha}{k\eta_1}\right]\int_\Omega V^2\\
&\ \  +\frac{1}{2}\left[(1+2 \beta )\alpha\eta_1^2+\frac{4}{k^2}+\frac{1}{\alpha}\left(\eta_1\alpha
-\frac{\chi}{k}\right)^2+\frac{2\eta_1\chi}{k}\right]\int_\Omega(W-1)^2\\
&\ \ \ \   +\frac{1}{2}\left(1-2 \beta  +\frac{\gamma_2^2}{\alpha}-\frac{2\gamma_2}{k\eta_2}\right)\int_\Omega Z^2,
\end{split}
\end{equation*}
which together with \eqref{UW-l2} and \eqref{odi-requirement2} enables us to derive a key ODI for $\mathcal{H}$:
\begin{equation}\label{diff-G-lem1}
\begin{split}
\mathcal{H}'(t) &+\left(\frac{\gamma_1}{k\eta_1}+ 2\beta \right)\alpha\int_\Omega|\nabla V|^2+\left(\frac{\gamma_2}{k\eta_2}+ 2\beta \right)\int_\Omega|\nabla Z|^2\\
&\leq \left[(1+ \beta ) \eta_2^2-\frac{2\alpha}{k^2}\right]\int_\Omega (U-1)^2\\
& \ \ +\frac{1}{2}\left[(1-2 \beta )\alpha+(\gamma_1\alpha-\chi)^2-\frac{2\gamma_1\alpha}{k\eta_1}\right]
\int_\Omega V^2\\
& \ \  +\frac{1}{2}\left[2(1+ \beta )\alpha\eta_1^2-\frac{4}{k^2}+\frac{\chi^2}{k^2\alpha}\right]\int_\Omega(W-1)^2 \\
&\ \  +\frac{1}{2}\left(1-2 \beta  +\frac{\gamma_2^2}{\alpha}-\frac{2\gamma_2}{k\eta_2}\right)\int_\Omega Z^2\\
&=-A_1\int_\Omega(U-1)^2- A_2\int_\Omega V^2-A_3\int_\Omega(W-1)^2-A_4\int_\Omega Z^2.
\end{split}
\end{equation}
Now, writing
\be\label{decay-rate}
\begin{split}
\delta=\min\Bigr\{ & \frac{A_1k}{\alpha}, \  \frac{2A_2}{\alpha\tau_1\left
(1+2 \beta +\frac{\gamma_1}{k\eta_1}\right)},  \ A_3 k,  \\
&\  \frac{2A_4}{\tau_2\left
(1+2 \beta +\frac{\gamma_2}{k\eta_2}\right)}, \  2\left(\frac{\gamma_1}{k\eta_1\tau_1}+ \frac{2\beta}{\tau_1} \right), \   2\left(\frac{\gamma_2}{k\eta_2\tau_2}+ \frac{2\beta}{\tau_2} \right)\Bigr\},
\end{split}
\ee
and recalling  that  $\bar U=\bar W=1$ , from \eqref{UlnU+WlnW-by UWl2},  \eqref{diff-G-lem1} and \eqref{energy function}, we finally derive a simple ODI for $\mathcal{H}$ of the form:
$$
\mathcal{H}'(t)\leq -\delta \mathcal{H}(t), \quad t\in [ 0, T_m),
$$
which trivially gives rise to the desired exponential decay estimate \eqref{decay property}.
\end{proof}
\begin{remark}\label{4-2-lt} By setting $\chi=0, (\eta_1, \tau_1)=(\eta_2,  \tau_2), (W_0, \tau_2 Z_0)=(U_0, \tau_1 V_0)$, we see by uniqueness that \eqref{equal system} reduces to two copies of the minimal KS model:
\begin{eqnarray}\label{KS2-equal}
\begin{cases}
U_t =  \nabla\cdot\left(\nabla U-U\nabla V\right) &\text{in } \Omega\times(0,\infty), \\[0.2cm]
\tau_1 V_t = \Delta V-V+\eta_1(U-1)  &\text{in } \Omega\times(0,\infty), \\[0.2cm]
\frac{\partial U}{\partial \nu}=\frac{\partial V}{\partial \nu}=0&\text{on } \partial\Omega\times(0,\infty), \\[0.2cm]
\left(U, \   \tau_1V\right)=\left(U_0,\   \tau_1 V_0\right)&\text{in } \Omega\times\{0\}.
\end{cases}
\end{eqnarray}
These together with our subsequent $W^{j,\infty} (j\geq 1)$-convergence offer an exponential decay for  \eqref{KS2-equal}  with  convergence rates, extending and detailing  those of \cite{GZ98, Win10-JDE}.
\end{remark}
Given  the crucial starting $L^1$-convergence of $(U\ln U, W\ln W)$ provided in Lemmas \ref{con-lem-pe} and \ref{con-lem-pp}, which simply yields the uniform $L^1$-boundedness of $(U\ln U, W\ln W)$, repeating the arguments on boundedneness in our Section 3, especially Lemma \ref{gradUW-improve-lem}, we see that the solution to \eqref{equal system} or equivalently \eqref{Loop equations}  exists globally in time and is bounded in $L^\infty(\Omega\times (0, \infty))$; moreover, in the sprit of Lemma \ref{gradUW-improve-lem},  we have the following uniform higher order gradient estimate away from $t=0$, for some $C>0$,
\be\label{UVWZ-full bdd}
\begin{split}
&\| \left(U(t), \  W(t)\right)\|_{W^{2,4}}+\|\left(U(t), \ W(t)\right)\|_{W^{1,\infty}}\\[0.2cm]
 &+\| \left(I_{\{\tau_1=0\}}V(t), \ I_{\{\tau_2=0\}}Z(t)\right)\|_{W^{4,4}}+\|\left(V(t),
\ Z(t)\right)\|_{W^{3,\infty}}\leq C, \ \forall t\geq 1.
\end{split}
\ee

\begin{lemma}\label{con-improve-UWLp}  For any $p\geq 1$, there exists  $C_p=C(p, \eta_1,\eta_2, \tau_1,\tau_2,\chi, k)>0$ such  that
\begin{equation}\label{UW-LP-con}
\begin{split}
\|U(t)-1\|_{L^p}+\|W(t)-1\|_{L^p}\leq C_pe^{-\frac{\sigma}{2p} t}, \ \  t>0.
\end{split}
\end{equation}
 There exist constants $D_i>0$ depending on $(\eta_1,\eta_2,\chi, k)$ such that
\be\label{VZ-L2-con}
\begin{cases}
\|V(t)\|_{L^1} +\|Z(t)\|_{L^1}\leq   D_1 e^{- \frac{\sigma}{4} t},  \ t>0, &\text{ if } \tau_1=\tau_2=0, \\[0.2cm]
\left(\|V(t)\|_{L^1},\ \ \|Z(t)\|_{L^1}\right)\\[0.2cm]
\ \leq  D_2 \left(e^{-\frac{1}{3}\min\left\{\frac{1}{\tau_1}, \   \frac{\sigma}{2}\right\} t}, \ e^{-\frac{1}{3}\min\left\{ \frac{1}{\tau_2}, \  \frac{\sigma}{2}\right\} t}\right), \ t>0, &\text{ if } \tau_1,  \tau_2>0.
 \end{cases}
\ee
We remind here again $\sigma=\mu$ if $\tau_1=\tau_2=0$ and $\sigma=\delta$ if $\tau_1,  \tau_2>0$.
\end{lemma}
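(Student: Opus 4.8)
The plan is to upgrade the entropy-type exponential decay furnished by Lemmas~\ref{con-lem-pe} and~\ref{con-lem-pp} to $L^p$-decay of $(U-1,W-1)$ through the Csisz\'{a}r--Kullback--Pinsker inequality and an interpolation against the uniform bounds \eqref{UVWZ-full bdd}, and then to deduce the decay of $V$ and $Z$ directly from the second and fourth equations of \eqref{equal system}.

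The first step is to read off a clean decay of the relative entropies. Since $s\ln s\ge s-1$ and $\bar U=\bar W=1$ by \eqref{UVWZ-int}, both $\int_\Omega U\ln U$ and $\int_\Omega W\ln W$ are nonnegative by \eqref{f nonnegative}; hence, when $\tau_1=\tau_2=0$, each of the two nonnegative terms on the left of \eqref{uw-ln-exp} is individually bounded by its right-hand side, while, when $\tau_1,\tau_2>0$, the crude inequality $\int_\Omega U\ln U\le\|U\ln U\|_{L^1}$ together with \eqref{ulnu-wlnw-gradv-z-con} gives $\int_\Omega U\ln U\le Ce^{-\delta t}$ and likewise for $W$. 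In either case,
\[
\int_\Omega U(t)\ln U(t)+\int_\Omega W(t)\ln W(t)\le Ce^{-\sigma t},\qquad t\ge0,
\]
with $\sigma=\mu$ if $\tau_1=\tau_2=0$ and $\sigma=\delta$ if $\tau_1,\tau_2>0$. Applying the Csisz\'{a}r--Kullback--Pinsker inequality (cf.~\cite{CJMTU01}) to the probability densities $U/|\Omega|$ and $W/|\Omega|$ relative to the normalized Lebesgue measure on $\Omega$ then yields $\|U-1\|_{L^1}^2\le 2|\Omega|\int_\Omega U\ln U$ and the analogue for $W$, whence
\[
\|U(t)-1\|_{L^1}+\|W(t)-1\|_{L^1}\le Ce^{-\frac{\sigma}{2}t},\qquad t>0.
\]

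The second step is interpolation. Under the standing smallness hypotheses the solution is global and bounded on $\Omega\times(0,\infty)$ (by Section~3, as recorded before the statement), so \eqref{UVWZ-full bdd} gives in particular $\|U-1\|_{L^\infty}+\|W-1\|_{L^\infty}\le C$ uniformly in $t>0$. Therefore, for any $p\ge1$,
\[
\|U-1\|_{L^p}^{p}\le\|U-1\|_{L^\infty}^{p-1}\|U-1\|_{L^1}\le Ce^{-\frac{\sigma}{2}t},
\]
which is \eqref{UW-LP-con}, and the same for $W$. For $V$ and $Z$: if $\tau_1=\tau_2=0$, the identity \eqref{VZ-l2-by} gives $\|V\|_{L^2}^2\le\eta_1^2\|W-1\|_{L^2}^2$ and $\|Z\|_{L^2}^2\le\eta_2^2\|U-1\|_{L^2}^2$, so \eqref{UW-LP-con} with $p=2$ together with $\|\cdot\|_{L^1}\le|\Omega|^{1/2}\|\cdot\|_{L^2}$ yields the first line of \eqref{VZ-L2-con} with rate $\sigma/4$. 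If $\tau_1,\tau_2>0$, testing the second equation of \eqref{equal system} by $V$ and using Young's inequality gives $\frac{d}{dt}\|V\|_{L^2}^2+\frac{1}{\tau_1}\|V\|_{L^2}^2\le\frac{C}{\tau_1}e^{-\frac{\sigma}{2}t}$; solving this linear ODE gives $\|V\|_{L^2}^2\le Ce^{-at}$, or $\|V\|_{L^2}^2\le Cte^{-at}$ in the resonant case $1/\tau_1=\sigma/2$, with $a=\min\{1/\tau_1,\sigma/2\}$, and absorbing the factor $t$ via $te^{-at}\le\frac{3}{ea}e^{-\frac{2}{3}at}$ leads to $\|V\|_{L^1}\le C\|V\|_{L^2}\le Ce^{-\frac{1}{3}\min\{1/\tau_1,\sigma/2\}t}$; the same argument with $\tau_2$ applies to $Z$, producing the second line of \eqref{VZ-L2-con}.

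The only genuinely delicate points are bookkeeping ones. One must feed the Csisz\'{a}r--Kullback--Pinsker inequality the entropy $\int_\Omega U\ln U$ itself --- which is nonnegative and exponentially small --- rather than $\|U\ln U\|_{L^1}$, which differs from it by the non-decaying constant $2e^{-1}|\Omega|$, so one must pass back to the genuine entropy in the fully parabolic case. And in that case one must also accommodate the possible resonance between the source decay rate $\sigma/2$ and the relaxation rate $1/\tau_i$ of the $V$- and $Z$-equations, which is precisely what forces the slightly lossy prefactor $\tfrac{1}{3}$ --- as opposed to $\tfrac{1}{2}$ in the generic non-resonant case --- in \eqref{VZ-L2-con}.
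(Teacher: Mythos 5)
Your proposal is correct and follows essentially the same route as the paper: the Csisz\'{a}r--Kullback--Pinsker inequality converts the entropy decay of Lemmas \ref{con-lem-pe} and \ref{con-lem-pp} into $L^1$-decay of $(U-1,W-1)$, interpolation against the uniform $L^\infty$-bound gives \eqref{UW-LP-con}, and the $V$- and $Z$-equations are handled via the elliptic identity \eqref{VZ-l2-by} when $\tau_1=\tau_2=0$ and via the linear ODI $\frac{d}{dt}\|V\|_{L^2}^2+\frac{1}{\tau_1}\|V\|_{L^2}^2\le Ce^{-\sigma t/2}$ when $\tau_1,\tau_2>0$. Your explicit treatment of the resonant case $1/\tau_i=\sigma/2$, which explains the prefactor $\tfrac13$ in \eqref{VZ-L2-con}, is a welcome clarification of a step the paper leaves implicit.
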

\begin{proof} In view of  the  Csisz\'{a}r-Kullback-Pinsker inequality (cf. \cite{CJMTU01}) and the facts that $\bar U=1=\bar W$ and \eqref{uw-ln-exp} or \eqref{ulnu-wlnw-gradv-z-con},  we infer, for some $C_1, C_2>0$, that
$$
\begin{cases}
&\|U-\overline{U}\|^2_{L^1}\leq 2\overline{U}\int_{\Omega}U\ln \frac{U}{\overline{U}}
=2\int_\Omega U\ln U\leq C_1e^{-\sigma  t}, \quad t>0, \\[0.25cm]
&\|W-\overline{W}\|^2_{L^1}\leq 2\overline{W}\int_{\Omega}W\ln \frac{W}{\overline{W}}
=2\int_\Omega W\ln W\leq C_2e^{-\sigma  t}, \quad t>0.
\end{cases}
$$
Hence, for any $p\geq 1$, the $L^\infty$-boundedness of $(U, W)$  provides some $C_3, C_4>0$ depending on $p$, $\eta_i, \tau_i, \chi$ and $\Omega$ such that
 \be\label{UW-Lp}
\begin{cases}
\|U-1\|_{L^p}\leq\|U-1\|^{\frac{p-1}{p}}_{L^{\infty}}\|U-1\|^{\frac{1}{p}}_{L^1}
\leq C_3e^{-\frac{\sigma}{2p} t},  \quad t>0, \\[0.25cm]
\|W-1\|_{L^p}\leq\|W-1\|^{\frac{p-1}{p}}_{L^{\infty}}\|W-1\|^{\frac{1}{p}}_{L^1}
\leq C_4e^{-\frac{\sigma}{2p} t},  \quad t>0.
 \end{cases}
\ee
Now, by the $V$-equation in \eqref{equal system}, we have
\be\label{V-Lq}
\begin{split}
\tau_1 \frac{d}{dt}\int_\Omega V^2+2\int_\Omega |\nabla V|^2+\int_\Omega V^2\leq\eta_1^2\int_\Omega (W-1)^2.
\end{split}
\ee
Thus, when $\tau_1=0$,  we  deduce from \eqref{V-Lq} and \eqref{UW-Lp} that
\be\label{V-L2-0}
\|V(t)\|_{L^1}\leq |\Omega|^\frac{1}{2} \|V(t)\|_{L^2}\leq \eta_1|\Omega|^\frac{1}{2} \|W-1\|_{L^2}\leq C_5e^{-\frac{\sigma}{4} t},  \ \ t>0;
\ee
and, when $\tau_1>0$, we get \eqref{V-Lq} and \eqref{UW-Lp} that
$$
\frac{d}{dt}\int_\Omega V^2+\frac{1}{\tau_1}\int_\Omega V^2\leq C_6 e^{-\frac{\sigma}{2}t},
$$
which enables us to derive that
\be\label{V-L2-1}
\|V(t)\|_{L^1}\leq |\Omega|^\frac{1}{2} \|V(t)\|_{L^2}\leq   C_7e^{-\frac{1}{3}\min\left\{\frac{1}{\tau_1}, \  \frac{\sigma}{2}\right\} t},  \ \ t>0.
\ee
One can easily use the same argument to the $W$-equation in \eqref{equal system} to infer that
\be\label{Z-L2}
\begin{cases}
 \|V(t)\|_{L^1}\leq |\Omega|^\frac{1}{2} \|V(t)\|_{L^2}\leq   C_8e^{- \frac{\sigma}{4} t},  \ t>0, &\text{ if } \tau_2=0, \\[0.2cm]
\|V(t)\|_{L^1}\leq |\Omega|^\frac{1}{2} \|V(t)\|_{L^2}\leq   C_9e^{-\frac{1}{3}\min\left\{\frac{1}{\tau_2}, \  \frac{\sigma}{2}\right\} t}, \ t>0, &\text{ if } \tau_2>0.
 \end{cases}
\ee
Now, our decay estimate \eqref{VZ-L2-con} follows trivially from \eqref{V-L2-0}, \eqref{V-L2-1} and \eqref{Z-L2}.
\end{proof}
At this position, based on the exponential decay estimate \eqref{UW-LP-con} and \eqref{VZ-L2-con}, one can use (commonly used,  cf. \cite{A19, A20, Win10-JDE}) the standard $W^{2,p}$-estimate in the case of $\tau_1=\tau_2=0$ or the $L^p$-$L^q$-smoothing estimate for the Neumann heat semigroup $e^{t\Delta}$ in the case of $\tau_1, \tau_2>0$ to derive the exponential decay of bounded solutions in up to $L^\infty$-norm. Here, thanks to our uniform higher order gradient estimates as in  \eqref{UVWZ-full bdd}, instead, we can easily apply  the G-N interpolation inequality to  improve the $L^p$-convergence to $W^{j,\infty} (j\geq 1)$-convergence of $(U, V, W, Z)$.

\begin{lemma}\label{con-fin-lem} Under Lemma \ref{con-lem-pe} or \ref{con-lem-pp},  there exist constants $L_i >0$ depending on $(\eta_1,\eta_2, \tau_1,\tau_2, \chi,  k)$ such that
\be\label{UW-com-con}
\begin{cases}
&\|\left(U(t)-1, \ \nabla U(t),  \  W(t)-1, \  \nabla W(t)\right)\|_{L^\infty}\\[0.2cm]
& \leq  L_1\left(e^{-\frac{\sigma}{6}t}, \ e^{-\frac{\sigma}{14}t}, \ e^{-\frac{\sigma}{6}t}, \ e^{-\frac{\sigma}{14}t}\right),\quad  \forall t\geq 1;
\end{cases}
\ee
and, in the case of $\tau_1=\tau_2=0$,
\be\label{VZ-com-con}
\begin{split}
&\|\left(V, \ \nabla V, \ D^2V, D^3 V\right)\|_{L^\infty}+\|\left(Z, \ \nabla Z, \ D^2Z, D^3 Z\right)\|_{L^\infty}\\[0.2cm]
&\leq L_2 \left(e^{-\frac{\mu}{12}t}, \ e^{-\frac{\mu}{16}t}, \ e^{-\frac{\mu}{20}t}, \ e^{-\frac{\mu}{44}t}\right), \quad    \forall t\geq 1;
\end{split}
\ee
in the case of $\tau_1, \tau_2>0$,
\be\label{VZ-com-con-pp}
\begin{cases}
\|\left(V, \ \nabla V, \ D^2V\right)\|_{L^\infty}\leq L_3 \left(e^{-\frac{\zeta_1}{9}t}, \ e^{-\frac{\zeta_1}{12}t}, \ e^{-\frac{\zeta_1}{15}t}\right), \quad \forall t\geq 1, \\[0.2cm]
\|\left(Z, \ \nabla Z, \ D^2Z\right)\|_{L^\infty}\leq L_4\left(e^{-\frac{\zeta_2}{9}t}, \ e^{-\frac{\zeta_2}{12}t}, \ e^{-\frac{\zeta_2}{15}t}\right), \quad  \forall t\geq 1.
\end{cases}
\ee
Here, $\zeta_i=\min\{\frac{1}{\tau_i}, \   \frac{\sigma}{2}\},  \ i=1,2$ and $\sigma$ is defined in Lemma \ref{con-improve-UWLp}.
\end{lemma}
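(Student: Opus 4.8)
The plan is to upgrade the $L^p$- and $L^1$-decay already obtained in Lemma~\ref{con-improve-UWLp} to $W^{j,\infty}$-decay by interpolating it against the uniform higher order gradient bounds \eqref{UVWZ-full bdd}, using the 2D Gagliardo--Nirenberg inequality of Lemma~\ref{G-N}. All estimates will be derived for $t\ge 1$, the range in which \eqref{UVWZ-full bdd} holds; the behaviour on $(0,1]$ is absorbed into the constants $L_i$ by the local bounds of Lemma~\ref{2.l1}.

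First, for $U$ and $W$, I would take $p=1$ in \eqref{UW-LP-con}, so that $\|U-1\|_{L^1}+\|W-1\|_{L^1}\le Ce^{-\sigma t/2}$, and then apply \eqref{G-N-I} twice: once in the form $\|U-1\|_{L^\infty}\le C\|\nabla U\|_{L^\infty}^{2/3}\|U-1\|_{L^1}^{1/3}+C\|U-1\|_{L^1}$ and once in the form $\|\nabla U\|_{L^\infty}\le C\|D^2U\|_{L^4}^{6/7}\|U-1\|_{L^1}^{1/7}+C\|U-1\|_{L^1}$, the interpolation exponents being forced by the scaling relation in Lemma~\ref{G-N}. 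Since $\|\nabla U\|_{L^\infty}$ and $\|D^2U\|_{L^4}$ are uniformly bounded by \eqref{UVWZ-full bdd}, these yield exactly the decay rates $\sigma/6$ and $\sigma/14$ claimed in \eqref{UW-com-con}; the same argument applied to $W$ settles the first assertion in both the elliptic and the fully parabolic regimes.

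Next, for $V$ and $Z$ I would run the same mechanism anchored on the $L^1$-decay of $V,Z$ from \eqref{VZ-L2-con}. In the elliptic case $\tau_1=\tau_2=0$ we have $\|V\|_{L^1}+\|Z\|_{L^1}\le D_1e^{-\mu t/4}$ together with uniform bounds on $\|\nabla V\|_{L^\infty},\|D^2V\|_{L^\infty},\|D^3V\|_{L^\infty},\|D^4V\|_{L^4}$ (the last because $I_{\{\tau_1=0\}}V\in W^{4,4}$ in \eqref{UVWZ-full bdd}); interpolating $\|V\|_{L^\infty},\|\nabla V\|_{L^\infty},\|D^2V\|_{L^\infty},\|D^3V\|_{L^\infty}$ against these via \eqref{G-N-I} produces the powers $\|V\|_{L^1}^{1/3},\|V\|_{L^1}^{1/4},\|V\|_{L^1}^{1/5},\|V\|_{L^1}^{1/11}$ respectively, hence the rates $\mu/12,\mu/16,\mu/20,\mu/44$ in \eqref{VZ-com-con}, and similarly for $Z$. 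In the fully parabolic case $\tau_1,\tau_2>0$ only $W^{3,\infty}$ of $V$ is controlled (there is no $W^{4,4}$ bound), so I would carry out the first three interpolations only, now with $\|V\|_{L^1}\le D_2e^{-\zeta_1t/3}$ and $\zeta_1=\min\{1/\tau_1,\sigma/2\}$, obtaining the rates $\zeta_1/9,\zeta_1/12,\zeta_1/15$ in \eqref{VZ-com-con-pp}, and likewise for $Z$ with $\zeta_2$.

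There is no genuinely hard step here: the argument is essentially bookkeeping, and the only points needing care are (i) checking in each Gagliardo--Nirenberg application that the interpolation exponent satisfies $i/j\le\theta<1$, so that the exceptional clause in Lemma~\ref{G-N} is never triggered (one gets $\tfrac23,\tfrac34,\tfrac45,\tfrac67,\tfrac{10}{11}$ in the various uses, all admissible), and (ii) noting that the additive lower order terms $\|f\|_{L^r}$ in \eqref{G-N-I} themselves decay and so do not spoil the rates. This establishes \eqref{UW-com-con}--\eqref{VZ-com-con-pp}; translating back through \eqref{trans-id} and combining with the exponential decay of $\bar v-\bar w_0$ and $\bar z-\bar u_0$ recorded in \eqref{uvwz-l1} then recovers the original-variable statement (B3).
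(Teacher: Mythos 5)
Your proposal is correct and follows essentially the same route as the paper: both anchor on the $L^1$-exponential decay of $(U-1,W-1,V,Z)$ from Lemma \ref{con-improve-UWLp} and interpolate against the uniform higher-order bounds \eqref{UVWZ-full bdd} via the 2D Gagliardo--Nirenberg inequality, with the identical exponents $\tfrac23,\tfrac67,\tfrac34,\tfrac45,\tfrac{10}{11}$ producing the stated rates. The only cosmetic difference is that the translation back to the original variables via \eqref{trans-id} belongs to the subsequent proof of (B3) rather than to this lemma.
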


\begin{proof}
Based on \eqref{UVWZ-full bdd}, \eqref{UW-LP-con} and the 2D G-N inequality in \eqref{G-N-I}, we deduce that
\be\label{U-Linfty-con}
\begin{split}
\|U-1\|_{L^\infty}&\leq C_1\|\nabla U\|_{L^\infty}^\frac{2}{3}\|U-1\|_{L^1}^\frac{1}{3}+C_1\|U-1\|_{L^1}\\[0.2cm]
&\leq C_2\|U-1\|_{L^1}^\frac{1}{3}\leq C_3e^{-\frac{\sigma}{6}t},\ \ t\geq 1.
\end{split}
\ee
and
\be\label{gradU-L-infty-con}
\begin{split}
\|\nabla(U-1)\|_{L^\infty}&\leq C_4\|D^2U\|_{L^4}^{\frac{6}{7}}\|U-1\|_{L^1}^{\frac{1}{7}}+C_4\|U-1\|_{L^1}\\
&\leq C_5\|U-1\|_{L^1}^\frac{1}{7}\leq C_6e^{-\frac{\sigma}{14}t},  \ \  t\geq 1.
\end{split}
\ee
The same reasonings applied to the $W$-component give us that
\be\label{WgradW-con}
\begin{cases}
\|W-1\|_{L^\infty}\leq C_7e^{-\frac{\sigma}{6}t},\ \ t\geq 1, \\[0.2cm]
 \|\nabla(W-1)\|_{L^\infty}\leq C_8e^{-\frac{\sigma}{14}t}, \ \    t\geq 1.
\end{cases}
\ee
The $W^{1,\infty}$-decay of $(U, W)$  in \eqref{UW-com-con} follows from \eqref{U-Linfty-con}, \eqref{gradU-L-infty-con} and \eqref{WgradW-con}.

When $\tau_1=\tau_2=0$,  we  use \eqref{UVWZ-full bdd}, \eqref{VZ-L2-con} and the 2D G-N inequality to infer that
\be\label{V-com-con}
\begin{cases}
\|V\|_{L^\infty}\leq C_9\|\nabla V\|_{L^\infty}^\frac{2}{3}\|V\|_{L^1}^\frac{1}{3}+C_9\|V\|_{L^1}\leq C_{10}e^{-\frac{\sigma}{12}t},\ \ t\geq 1, \\[0.2cm]
\|\nabla V\|_{L^\infty}\leq C_{11}\|D^2 V\|_{L^\infty}^\frac{3}{4}\|V\|_{L^1}^\frac{1}{4}+C_{11}\|V\|_{L^1}\leq C_{12}e^{-\frac{\sigma}{16}t},\ \ t\geq 1, \\[0.2cm]
\|D^2 V\|_{L^\infty}\leq C_{13}\|D^3 V\|_{L^\infty}^\frac{4}{5}\|V\|_{L^1}^\frac{1}{5}+C_{13}\|V\|_{L^1}\leq C_{14}e^{-\frac{\sigma}{20}t},\ \ t\geq 1,  \\[0.2cm]
\|D^3 V\|_{L^\infty}\leq C_{15}\|D^4 V\|_{L^4}^\frac{10}{11}\|V\|_{L^1}^\frac{1}{11}+C_{15}\|V\|_{L^1}\leq C_{16}e^{-\frac{\sigma}{44}t},\ \ t\geq 1.
\end{cases}
\ee
In a similar way via replacing $V$ by $Z$ in \eqref{V-com-con}, we obtain that
\be\label{Z-com-con}
\|\left(Z, \ \nabla Z, \ D^2Z, D^3 Z\right)\|_{L^\infty}\leq C_{17} \left(e^{-\frac{\sigma}{12}t}, \ e^{-\frac{\sigma}{16}t}, \ e^{-\frac{\sigma}{20}t}, \ e^{-\frac{\sigma}{44}t}\right), \quad t\geq 1.
\ee
Then the $W^{3,\infty}$-decay of $(V, Z)$ follows from \eqref{V-com-con} and \eqref{Z-com-con} by recalling $\sigma=\mu$.

When  $\tau_1, \tau_2>0$,  in a similar way to \eqref{V-com-con},  we  readily apply  \eqref{UVWZ-full bdd}, \eqref{VZ-L2-con} and the 2D G-N inequality in \eqref{G-N-I} to derive the $W^{2,\infty}$-decay of $(V, Z)$ in  \eqref{VZ-com-con-pp}.
\end{proof}
\begin{proof}[Proof of the $W^{j,\infty}$-exponential convergence in (B3)]
 Lemma \ref{con-fin-lem} actually proves more detailed exponential convergence about each order derivative of solution components than what has been  stated in \eqref{lt-thm} of  (B3). Here, we present a short proof of (B3). Indeed, using the transformations in \eqref{trans-id} that links \eqref{equal system} with \eqref{Loop equations} and translating Lemmas \ref{con-lem-pe}, \ref{con-lem-pp} and \ref{con-fin-lem} back to our original model \eqref{Loop equations}, we obtain the $W^{1,\infty}$-exponential convergence for $(u, w)$ as in \eqref{lt-thm} of  (B3). As for $(v, z)$, noticing the facts from \eqref{uvwz-l1} that
\begin{equation*}
\begin{split}
&\left\|\left(\bar v(t)-\bar{w}_0, \ \bar z(t)-\bar{u}_0\right)\right \|_{W^{j,\infty}}\\[0.2cm]
&=\begin{cases} (0, 0), & \text{ if } \tau_1=\tau_2=0, \\[0.2cm]
\left(\left|\bar{v}_0-\bar{w}_0\right| e^{-\frac{t}{\tau_1}}, \ \left|\bar{z}_0-\bar{u}_0\right| e^{-\frac{t}{\tau_2}}\right), & \text{ if } \tau_1, \tau_2>0,
\end{cases}
\end{split}
\end{equation*}
and then, in the case of $\tau_1, \tau_2>0$,  using \eqref{trans-id} and \eqref{VZ-com-con-pp}, we estimate
\be\label{zv-con-fin}
\begin{split}
&\left\|\left(\chi_1(v(t)-\bar{w}_0), \ \chi_2(z(t)-\bar{u}_0)\right)\right\|_{W^{2,\infty}}\\
&\leq
\left\|\left(\chi_1(v(t)-\bar{v}), \ \chi_2(z(t)-\bar{z})\right)\right\|_{W^{2,\infty}}+
\left\|\left(\chi_1(\bar v-\bar{w}_0), \chi_2(\bar z-\bar{u}_0)\right)\right\|_{W^{2,\infty}}\\
&\leq \left\|(V(t), \  Z(t))\right\|_{W^{2,\infty}}+C_{18}\left(e^{-\frac{1}{\tau_1}t}, \ e^{-\frac{1}{\tau_2}t}\right)\\
&\leq C_{19} \left(e^{-\frac{\zeta_1}{15}t}, \ e^{-\frac{\zeta_2}{15}t}\right), \ \ \ \forall t\geq 1.
\end{split}
\ee
In  the simple  case of $\tau_1=\tau_2=0$, we have from \eqref{VZ-com-con} that, for $t\geq 1$,
\be\label{vz-con-ee}
\left\|\left(\chi_1(v(t)-\bar{w}_0), \ \chi_2(z(t)-\bar{u}_0)\right)\right\|_{W^{3,\infty}}=\left\|(V(t), \  Z(t))\right\|_{W^{3,\infty}}\leq C_{20}  e^{-\frac{\mu}{44}t}.
\ee
Then our claimed $W^{j,\infty}(j=2,3)$-exponential convergence for $(v, z)$ in \eqref{lt-thm} of (B3) in the Introduction follows directly from  \eqref{zv-con-fin} and  \eqref{vz-con-ee}. \end{proof}

\textbf{Acknowledgments}  The authors  thank the anonymous referee very much  for carefully reading  our manuscript and giving positive and valuable comments, which further helped them to improve the exposition  of this work.   K. Lin is supported by the NSF of China  (No. 11801461), and T. Xiang   is supported by  the NSF of China  (No. 11601516 and 11871226) and the  Research Funds of Renmin University of China (No. 2018030199).

\end{document}